\newcounter{denseversion}
\newcounter{comments}
\newcounter{authorcounter}
\newcounter{adresscounter}
\def\title#1{\gdef\@title{#1}}
\def\@title{}
\def\subtitle#1{\gdef\@subtitle{#1}}
\def\@subtitle{}
\def\authortagsused{0}
\def\adresstag#1{\if!#1!\else$^{\;#1\;}$\fi}
\def\@authorsep#1{
  \ifnum\value{authorcounter}=#1 and \else\unskip, \fi
}
\renewcommand{\author}[2][]{
  \stepcounter{authorcounter}
  \if!#1!\else\gdef\authortagsused{1}\fi
  \ifnum\value{authorcounter}=1
    \def\@authorstringa{#2\adresstag{#1}}
    \def\@authorstringb{#2}
    \def\@authorstringc{#2\adresstag{#1}}
  \else
    \ifnum\value{authorcounter}=2
      \g@addto@macro\@authorstringa{\@authorsep{2}#2\adresstag{#1}}
      \g@addto@macro\@authorstringb{\@authorsep{2}#2}
      \g@addto@macro\@authorstringc{\\#2\adresstag{#1}}
    \else
      \g@addto@macro\@authorstringa{\@authorsep{3}#2\adresstag{#1}}
      \g@addto@macro\@authorstringb{\@authorsep{3}#2}
      \g@addto@macro\@authorstringc{\\#2\adresstag{#1}}
    \fi
  \fi}
\def\@author{\ifnum\value{denseversion}=0\@authorstringa\else\@authorstringb\fi}
\def\@adressstringa{}
\def\@adressstringb{}
\newcommand{\adress}[2][]{
  \stepcounter{adresscounter}
  \ifnum\value{adresscounter}=1
    \g@addto@macro\@adressstringa{\ifnum\authortagsused=0\def\br{\\}\else\def\br{, }\fi\adresstag{#1}#2}
    \g@addto@macro\@adressstringb{\def\br{\\}\adresstag{#1}\parbox[t]{14cm}{#2}}
  \else
    \g@addto@macro\@adressstringa{\\[\bigskipamount]\adresstag{#1}#2}
    \g@addto@macro\@adressstringb{\\[\medskipamount]\adresstag{#1}\parbox[t]{14cm}{#2}}
  \fi}
\def\preprint#1{\gdef\@preprint{#1}}
\def\@preprint{}
\def\keywords#1{\gdef\@keywords{#1}}
\def\@keywords{}
\def\msc#1{\gdef\@msc{#1}}
\def\@msc{}
\def\email#1{
   \gdef\@email{#1}
   \g@addto@macro\@authorstringc{ {\it (#1)}}}
\def\@email{}
\def\dedication#1{\gdef\@dedication{#1}}
\def\@dedication{}
\def\mybaselinestretch#1{
  \gdef\@mybaselinestretch{#1}
  \renewcommand{\baselinestretch}{\@mybaselinestretch}}
\def\myparskip#1{
  \gdef\@myparskip{#1}
  %\setenumstandard
  \setlength{\parskip}{\@myparskip}}
\newlength{\@listleftmargin}
\def\setenumstandard{
  \setlist{leftmargin=\@listleftmargin,itemsep=0pt,topsep=0pt,partopsep=0pt,parsep=\@myparskip}
  \setlist[enumerate]{align=left,labelsep=*,leftmargin=\@listleftmargin,itemsep=0pt,topsep=0pt,partopsep=0pt,parsep=\@myparskip}
}
\def\denseversion{
  \setcounter{denseversion}{1}
  \newgeometry{left=3cm,right=3cm,top=3cm}
  \mybaselinestretch{1.1}
  \myparskip{0.8ex}
  \normalfont
  \def\possiblelinebreak{}
  \fancyfoot[C]{\itshape{--$\,\,$\thepage$\,\,$--}}}
\def\possiblelinebreak{\\}
\renewcommand{\emph}[1]{\def\reserved@a{it}\ifx\f@shape\reserved@a\ul{#1}\else\textit{#1}\fi}
\def\setcrefnames{}
\newcommand{\mytableofcontents}{
   \ifnum\value{denseversion}=0
     \tableofcontents
     \setcrefnames %% \tableofcontents macht die \crefnames kaputt
   \else
     \renewcommand{\baselinestretch}{1.1}
     \setlength{\parskip}{0ex}
     \normalfont
     \begingroup
     \def\addvspace##1{\vskip0.4em}
     \tableofcontents
     \setcrefnames %% \tableofcontents macht die \crefnames kaputt
     \endgroup
     \renewcommand{\baselinestretch}{\@mybaselinestretch}
     \setlength{\parskip}{\@myparskip}
     \normalfont
   \fi}
\newlength{\zeilenlaenge}
\def\putindent#1{
  \settowidth{\zeilenlaenge}{#1}
  \ifnum\zeilenlaenge>\textwidth
    #1
  \else
    \noindent #1
  \fi
}
\def\pdfdaten{
  \hypersetup{
    pdftitle = {\@title},
    pdfauthor = {\@author},
    pdfkeywords = {\@keywords},    
    bookmarksopen = true,
    bookmarksopenlevel = 1
  }}  
\def\showkeywords{\begin{flushleft}\footnotesize\textbf{Keywords}: \@keywords\end{flushleft}}
\def\showmsc{\begin{flushleft}\footnotesize\textbf{MSC 2010}: \@msc\end{flushleft}}
\def\mytitle{}
\def\zmptitle{
  \begin{tabular}{cc}
    \begin{minipage}[c]{0.4\textwidth}
      \begin{flushleft}
        \includegraphics[width=110pt]{../../tex/zmp}
      \end{flushleft}  
    \end{minipage}&
    \begin{minipage}[c]{0.55\textwidth}
      \begin{flushright}
      {\small\sf\@preprint}
      \end{flushright}
    \end{minipage}
  \end{tabular}
  \vskip 2cm}
\def\maketitle{
  \pdfdaten
  \noindent
  \mytitle
  \begin{center}
    \LARGE\@title\\
    \if!\@subtitle!\else\smallskip\LARGE\@subtitle\\\fi
    \bigskip
    \if!\@author!\else\bigskip\large\@author\\\fi
    \ifnum\value{denseversion}=0
      \if!\@adressstringa!\else\bigskip\normalsize\@adressstringa\\\fi
      \if!\@email!\else\ifnum\value{authorcounter}=1\bigskip\normalsize\textit{\@email}\\\else\fi\fi
    \else
    \fi
    \if!\@dedication!\else\bigskip\normalsize{\@dedication}\\\fi
  \end{center}
  \ifnum\value{denseversion}=0\vskip 1.5cm\else\vskip0.5cm\fi}
\def\kobib#1{
  \begin{raggedright}
  \ifnum\value{denseversion}=0\else\small\fi
  \Oldbibliography{#1/kobib}
  \bibliographystyle{#1/kobib}
  \end{raggedright}
  \ifnum\value{denseversion}=0\else
      \noindent
      \if!\@authorstringc!\else
        \ifnum\authortagsused=0\ifnum\value{authorcounter}>1\normalsize\@authorstringc\\[\medskipamount]\else\fi\else\normalsize\@authorstringc\\[\medskipamount]\fi
      \fi
      \if!\@adressstringb!\else\normalsize\@adressstringb\\{}\fi
      \ifnum\authortagsused=0
        \ifnum\value{authorcounter}=1
          \if!\@email!\else\linebreak\normalsize\textit{\@email}\\{}\fi
        \else
        \fi
      \else
      \fi
  \fi}
\let\Oldbibliography\bibliography
\def\bibliography#1{
  \begin{raggedright}
  \ifnum\value{denseversion}=0\else\small\fi
  \Oldbibliography{#1}
  \end{raggedright}
  \ifnum\value{denseversion}=0\else
      \medskip
      \noindent
      \if!\@authorstringc!\else
        \ifnum\authortagsused=0\ifnum\value{authorcounter}>1\normalsize\@authorstringc\\[\medskipamount]\else\fi\else\normalsize\@authorstringc\\[\medskipamount]\fi
      \fi
      \if!\@adressstringb!\else\normalsize\@adressstringb\\{}\fi
      \ifnum\authortagsused=0
        \ifnum\value{authorcounter}=1
          \if!\@email!\else\linebreak\normalsize\textit{\@email}\\{}\fi
        \else
        \fi
      \else
      \fi
  \fi
}
\newenvironment{commentfigure}{\begin{comment}}{\end{comment}}
\newenvironment{sidewayscommentfigure}{\begin{minipage}}{\end{minipage}}
\newenvironment{displaycomment}{\begin{list}{}{\rightmargin=1cm\leftmargin=1cm}\item\sf\begin{small}}{\end{small}\end{list}}
\def\tocmark#1{}
\def\draftstamp#1{
  \def\tocmark##1{
    \ifnum\c@secnumdepth=0\section{##1}\fi
    \ifnum\c@secnumdepth=1\subsection{##1}\fi
    \ifnum\c@secnumdepth=2\subsubsection{##1}\fi
    \ifnum\c@secnumdepth=3\subsubsection{##1}\fi
  }
  \ifnum\value{comments}=0
    \gdef\@draft{DRAFT - Edited on \today\ by #1 - Comments are not displayed}
  \else
    \gdef\@draft{DRAFT - Edited on \today\ by #1 - Comments are displayed}
  \fi
  \fancyhead[C]{\footnotesize\tt\textcolor{red}{\@draft}}}
\def\skript{
  \renewenvironment{displaycomment}{}{}
  \ifnum\value{comments}=0
    \renewenvironment{example*}{\comment}{\endcomment}
    \renewenvironment{remark*}{\comment}{\endcomment}
  \else\fi
  \parindent=0mm        
}
\newcommand\remember[2]{% #1 is a key, #2 is the text
  \label{#1}
  \immediate\write\@auxout{\unexpanded{\global\long\@namedef{mytext@#1}{#2}}}%
  #2%
}
\newcommand\recall[1]{%
  \ifcsname mytext@#1\endcsname
    \@nameuse{mytext@#1}%
  \else
    ``??''
  \fi
}
\def\ul{\underline}
\def\Z {\mathbb{Z}}
\def\R {\mathbb{R}}
\def\subset{\subseteq}
\def\inv{\mathrm{inv}}
\def\cstar{C$^{\ast}$}
\renewenvironment{proof}[1][\nameProof]
  {\par\pushQED{\qed}%
   \normalfont \topsep6\p@\@plus6\p@\relax
   \trivlist
   \item[\hskip\labelsep
         \itshape
         #1\@addpunct{.}]
  \leavevmode}
  {\popQED\endtrivlist\@endpefalse}
\def\notebox#1#2{\begin{minipage}[b]{#1}\sloppy\renewcommand{\baselinestretch}{0.8}\footnotesize \begin{center}#2\end{center}\end{minipage}}
\def\with{\;\vert\;}
\def\mquad{\hspace{-2em}}
\newcommand{\arr}[1][r]{\ar@<0.7ex>[#1]\ar@<-0.7ex>[#1]}
\newcommand{\arrr}[1][r]{\ar@<1.4ex>[#1]\ar[#1]\ar@<-1.4ex>[#1]}
\newcommand{\arrrr}[1][r]{\ar@<2.1ex>[#1]\ar@<-2.1ex>[#1]\ar@<0.7ex>[#1]\ar@<-0.7ex>[#1]}
\newlength{\myeqt} % Darin wird die Länge des übergebenen Textes abgespeichert
\newlength{\myeqs} % Darin wird die Länge des übergebenen Symbolds abgespeichert
\newlength{\myeqm} % Wieviel "klein" über die Breite des Symbolds hinausgehen darf
\newlength{\myeqn} % Die Standardbreite für große Boxen
\newcommand\symtext[3][\myeqn]{
  \settowidth{\myeqt}{#2}
  \settowidth{\myeqs}{$#3$}
  \addtolength{\myeqs}{\the\myeqm}
  \ifdim\myeqt>\myeqs
    %groß
    \stackrel{\hspace{-#1}\notebox{#1}{\medskip #2 \\ $\downarrow$\smallskip}\hspace{-#1}}{#3}
  \else
    %klein
    \stackrel{\text{#2}}{#3}
  \fi}
\def\brackets#1{\IfStrEq{#1}{-}{}{(#1)}}
\def\subindex#1{\IfStrEq{#1}{-}{}{_{#1}}}
\newcommand{\alxydim}[2]{\begin{aligned}\xymatrix#1{#2}\end{aligned}}
\newlength{\myl}
\newcommand\sheaf[1]{\unitlength 0.1mm
  \settowidth{\myl}{$#1$}
  \addtolength{\myl}{-0.8mm}
  \begin{picture}(0,0)(0,0)
  \put(2,0){\text{\underline{\hspace{\myl}}}}
  \end{picture}#1\hspace{-0.15mm}}
\def\ddt#1#2#3{\left.\frac{\mathrm{d}^{\IfStrEq{#1}{1}{}{#1}}}{\mathrm{d}#2}\IfStrEq{#2}{#3}{\right.}{\right|_{#3}}}
\def\man{\Mfd}
\def\diff{\mathscr{D}\mathrm{iff}}
\def\top{\mathscr{T}\mathrm{op}}
\def\deltatop{\top^{\Delta}}
\def\ev{\mathrm{ev}}
\def\lw#1#2{{}^{#1\!}#2}
\def\B{\mathcal{B}\hspace{-0.03em}}
\newlength{\widthtmp}
\def\length#1{\settowidth{\widthtmp}{#1}\the\widthtmp}
\def\ttimes#1#2{\hspace{-0.15em}\tensor[_{#1}]{\times}{_{#2}}}
\definecolor{olivegreen}{rgb}{.33,.55,.18}
\newcommand{\ie}{i.e., }
\newcommand{\mc}[1]{\mathcal{#1}}
\newcommand{\smooth}{^{\mathrm{rig}}}
\newcommand{\SO}{\operatorname{SO}}
\newcommand{\Spin}{\operatorname{Spin}}
\newcommand{\String}{\stringgerbe}
\newcommand{\U}{\operatorname{U}}
\newcommand{\opp}{\operatorname{op}}
\newcommand{\pr}{\operatorname{pr}}
\newcommand{\fuse}{\circledast}
\newcommand{\lact}{\triangleright}
\newcommand{\ract}{\triangleleft}
\newcommand{\id}{\operatorname{id}}
\newcommand{\dis}{dis}
\newcommand{\Mod}{\mathcal{M}od}
\newcommand{\Homcat}{\mathscr{H}\mathrm{om}}
\newcommand{\Aut}{\mathrm{Aut}}
\newcommand{\AUT}{\mathscr{A}\mathrm{ut}}
\newcommand{\Bimod}{\mathscr{B}\mathrm{imod}}
\newcommand{\Bdl}[1]{#1\text{-}\mathscr{B}\mathrm{dl}}
\newcommand{\BimodBdl}{\mathscr{B}\mathrm{im}\mathscr{B}\mathrm{dl}}
\newcommand{\Grb}{\mathscr{G}\mathrm{rb}}
\newcommand{\Alg}{\Incl\mathrm{lg}}
\newcommand{\AlgBdl}{\Alg\mathscr{B}\mathrm{dl}}
\def\vNAlg{\mathrm{v}\mathscr{N}\Alg}
\def\vNAlgBdl{\mathrm{v}\mathscr{N}\AlgBdl}
\def\vNAtwoVectBdl{2\mathscr{H}\mathrm{ilb}\mathscr{B}\mathrm{dl}}
\newcommand{\Mfd}{\mathscr{M}\mathrm{fd}}
\newcommand{\Incl}{\mathscr{A}}
\def\stringgerbe{\mathcal{S}\hspace{-0.05em}t\hspace{-0.02em}r\hspace{-0.05em}i\hspace{-0.07em}n\hspace{-0.07em}g}
\def\quot#1{``#1''}
\def\quand{\quad\text{ and }\quad}
\def\quomma{\quad\text{, }\quad}
\def\nameProof{Proof}
\def\mathscr#1{\EuScript{#1}}
\title{The stringor bundle}
\author[a]{Peter Kristel}
\email{peter.kristel@umanitoba.ca}
\author[b]{Matthias Ludewig}
\email{matthias.ludewig@mathematik.uni-regensburg.de}
\author[c]{Konrad Waldorf}
\email{konrad.waldorf@uni-greifswald.de}
\keywords{}
\newcommand{\Rep}{\mathcal{R}}
\begin{document}

\maketitle

\begin{abstract}
We set up a framework of 2-Hilbert bundles, which allows to rigorously define  the \quot{stringor bundle}, a higher differential geometric object anticipated by Stolz and Teichner in an unpublished preprint about 20 years ago. Our framework includes an associated bundle construction, allowing us to associate  a 2-Hilbert bundle with a principal 2-bundle and a unitary representation of its structure 2-group. We prove that the Stolz-Teichner stringor bundle is canonically isomorphic to the 2-Hilbert bundle obtained from applying our associated bundle construction to a string structure on a manifold and the stringor representation of the string 2-group that we discovered in earlier work. This establishes a perfect analogy to spin manifolds,  representations of the spin groups, and spinor bundles.  
\end{abstract}

\mytableofcontents

\setsecnumdepth{0}

\section{Introduction}

The \emph{spin group} $\Spin(d)$ is the simply-connected cover of the special orthogonal group $\SO(d)$ (when $d\geq 3$). 
 The frame bundle $\SO(M)$ of an oriented Riemannian manifold $M$ of dimension $d\geq 3$ -- a principal $\SO(d)$-bundle -- may admit a lift to $\Spin(d)$.
Given such a lift -- called a \emph{spin structure} -- one can form the associated vector bundle using the \emph{spinor representation} of $\Spin(d)$.
The construction of this \emph{spinor bundle} is the starting point of spin geometry.

Motivated by the success of spin geometry in geometry and particle physics, \emph{string geometry} seeks for analogous structures meeting the demands of string theory. Most successful has been the principle established by  Killingback and Witten to look at spin structures on the configuration space of strings in $M$, the free loop space $LM=C^{\infty}(S^1,M)$ \cite{killingback1,witten2}. 
Such spin structures on $LM$ --- also called \emph{loop-spin structures} on $M$ and denoted below by  $\widetilde{L\Spin}(M)$ --- are different from the spin structures mentioned above, because the structure group of $LM$ has different properties compared to the finite-dimensional situation. Nonetheless, Stolz and Teichner outlined a construction of an infinite-dimensional spinor bundle on $LM$ \cite{stolz3}. Moreover, they established the principle of \emph{fusion in loop space}, expressing the idea that the relevant geometric structures on loop space correspond to (yet unknown) geometric structures on $M$ itself. In obvious analogy, they coined the terminology \emph{stringor bundle} for this unknown structure on $M$. Work of Brylinski \cite{brylinski1} and Murray \cite{Murray1996} on gerbes suggested that the stringor bundle is not an ordinary vector bundle, but must be of  a higher-categorical nature.

Another line of attack in string geometry is to search for an analogue of the spin group. Adding further connectedness to the orthogonal group, the \emph{string group}  $\String(d)$ is defined to be the 3-connected cover of $\Spin(d)$ \cite{ST04}. The string group cannot be realized as a finite-dimensional Lie group, and in recent years, the insight emerged that it is geometrically most fruitful to realize $\String(d)$ as a categorified group, or \emph{2-group} \cite{baez5},  a point of view that will be further advocated in this paper. Several models of the string 2-group in different contexts have been constructed, e.g., as a strict Fr\'echet Lie 2-group \cite{baez9}, as a finite-dimensional smooth \quot{stacky} 2-group \cite{pries2}, or as a strict diffeological 2-group \cite{Waldorf}. A major success of these models is to allow a neat definition of a \emph{string structure} on a manifold, as a reduction of the frame bundle to a $\String(d)$-bundle gerbe, denoted below by $\stringgerbe(M)$. String structures in this sense are related to loop-spin structures on $M$; in fact, they are equivalent to an enhanced version called \emph{fusive loop-spin structures} \cite{Waldorfa,Nikolausa}. 
This relation connects the two approaches to string geometry on the level of their basic underlying structures. In the present paper, we provide a yet deeper connection between these two approaches.

We invoke two recent developments that advanced each approach. 
The first concerns the stringor bundle of Stolz and Teichner, and its higher-categorical nature. 
In a sequence of papers \cite{kristel2019b, Kristel2019, kristel2020smooth, Kristel2019c} we obtained rigorous constructions of its main ingredients: the spinor bundle on loop space and, in particular, its fusion product that was anticipated long ago by Stolz and Teichner \cite{stolz3}. 
These constructions are based on  a given fusive loop-spin structure $\widetilde{L\Spin}(M)$ on $M$, and involve von Neumann algebra bundles and Connes fusion of bimodule bundles. 
In this paper, we reveal how this structure yields a higher-categorical vector bundle, more precisely, a \emph{2-Hilbert bundle}, which we call the \emph{Stolz-Teichner stringor bundle}, denoted $\mathscr{S}(\widetilde{L\Spin}(M))$. 
The theory of 2-vector bundles was developed in \cite{Kristel2022,Kristel2020} in a finite-dimensional context, based on the idea that a 2-vector space is nothing but an algebra, while  the morphisms are bimodules instead of algebra homomorphisms. 
It was then extended to the infinite-dimensional setting of 2-Hilbert bundles in \cite{ludewig2023spinor}, where algebras are replaced by von Neumann algebras.

%Yet, this work did not result in a 2-vector bundle. 
%The foundations of the theory of 2-vector bundles have been laid out in \cite{Kristel2022,Kristel2020}  in case of finite-dimensional bare algebras. 
%An important result of the present paper and further developments carried out in \cite{ludewig2023spinor}  is a better version of the bicategory of von 2-Hilbert bundles (\cref{sec:vonNeumannbundles}), together with a final construction of the stringor bundle in that setting (\cref{StringBundle}). We will denote it by  $\mathscr{S}_M$ in the following. 

The second advance is the \emph{stringor representation} constructed in \cite{Kristel2023}: a continuous, unitary representation of the string 2-group on a 2-Hilbert space,
\begin{equation} \label{RepresentationIntro}
 \Rep : \String(d) \to \mathcal{U}(A).
\end{equation}
Here, $A$ is the hyperfinite type III$_1$-factor, realized as a certain von Neumann algebra completion of an infinite-dimensional Clifford algebra, and $\mathcal{U}(A)$ is the unitary automorphism 2-group of $A$ (see \cref{Automorphism2Group}).
In this paper, we introduce an associated bundle construction (\cref{def:associated2vectorbundle}) which produces a 2-Hilbert bundle 
$\mathcal{Q} \times_{\mathcal{G}} A$ from a non-abelian  bundle gerbe $\mathcal{Q}$ for a topological strict 2-group $\mathcal{G}$ and  a continuous unitary representation $\Rep: \mathcal{G} \to \mathcal{U}(A)$ on a von Neumann algebra $A$.
In particular, we may use $\mathcal{Q} = \String(M)$, a string structure on $M$, the string 2-group $\mathcal{G} = \String(d)$, and $\Rep$ the stringor representation \eqref{RepresentationIntro}.
The following is the main result of this article,  stated as \cref{th:main} in the main text.

\begin{theorem}
\label{MainTheorem}
Let $M$ be a manifold with a fusive loop-spin structure $\widetilde{L\Spin}(M)$ and corresponding string structure $\stringgerbe(M)$. 
There exists a canonical isomorphism 
\begin{equation*}
    \stringgerbe(M) \times_{\String(d)} A\; \cong\; \mathscr{S}(\widetilde{L\Spin}(M))
\end{equation*}
between the 2-Hilbert bundle associated with $\stringgerbe(M)$ via the stringor    representation,  and the stringor bundle of Stolz and Teichner.
\end{theorem}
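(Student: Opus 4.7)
My plan is to establish the isomorphism by working locally and comparing the cocycle data of the two sides. First, I would unpack the associated bundle construction applied to the inputs at hand: given the non-abelian bundle gerbe $\stringgerbe(M)$ presenting the string structure, the 2-Hilbert bundle $\stringgerbe(M) \times_{\String(d)} A$ is built, after pulling back to a surjective submersion $Y \to M$ over which $\stringgerbe(M)$ trivializes, by assigning the constant von Neumann algebra $A$ over $Y$ and twisting the transitions on $Y \times_M Y$ using the bimodules obtained by applying the stringor representation $\Rep$ to the transition principal $\String(d)$-bundle, with coherence on $Y \times_M Y \times_M Y$ coming from the 2-morphism data of $\Rep$ together with the bundle gerbe product. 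This produces a concrete description as an algebra bundle, bimodule bundle, and coherent isomorphism of bimodule bundles, all determined by $\Rep$ and the cocycle data of $\stringgerbe(M)$.

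Second, I would unpack the Stolz--Teichner stringor bundle $\mathscr{S}(\widetilde{L\Spin}(M))$ following \cite{kristel2019b, Kristel2019, kristel2020smooth, Kristel2019c}. The constructions there produce, from the fusive loop-spin structure $\widetilde{L\Spin}(M)$, a von Neumann algebra bundle (with fiber $A$), a bimodule bundle transition on overlaps coming from the spinor bundle on loop groups, and a Connes-fusion coherence on triple overlaps derived from the fusion product. These assemble into the structure of a 2-Hilbert bundle over $M$, and by construction the underlying local algebra is the same $A$ that occurs as the representation space of $\Rep$.

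The core step is to match the two descriptions. For this I would invoke the equivalence between string structures and fusive loop-spin structures of \cite{Waldorfa, Nikolausa}, which allows both constructions to be compared on the same cover of $M$ with directly related cocycle data. The stringor representation of \cite{Kristel2023} was constructed precisely by assigning to a point in $\String(d)$ the bimodule that implements fusion of loop-spinors; consequently, applying $\Rep$ to the transition $\String(d)$-cocycle of $\stringgerbe(M)$ reproduces, term by term, the bimodule bundles used in assembling $\mathscr{S}(\widetilde{L\Spin}(M))$. Once this identification is in place on the level of algebras and bimodules, naturality of both constructions promotes the fiberwise agreement to a morphism of 2-Hilbert bundles, and the existence of an inverse (coming from the analogous identification performed in reverse) shows it is an isomorphism. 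Canonicity follows since every choice made is dictated by the correspondence of \cite{Waldorfa, Nikolausa} and the universal property of the associated bundle construction.

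The main obstacle will be the coherence verification at the 2-morphism level: the associator of $\stringgerbe(M) \times_{\String(d)} A$ on triple overlaps is produced by the multiplicator 2-cells of $\Rep$ composed with the bundle gerbe multiplication, whereas on the Stolz--Teichner side the analogous datum is the associator of Connes fusion composed with the fusion-product 2-cell of $\widetilde{L\Spin}(M)$. Checking that these agree requires chasing a nontrivial diagram involving the explicit formulas for $\Rep$ on 2-morphisms from \cite{Kristel2023} and the fusion-associator identities from \cite{Kristel2019, Kristel2019c}; it is precisely here that the \emph{fusive} hypothesis is used in an essential way, since non-fusive loop-spin structures would not produce a consistent coherence on the loop-space side.
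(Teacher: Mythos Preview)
Your proposal has the right overall shape---unpack both sides as (algebra bundle, bimodule bundle on double fibre products, coherence on triple fibre products) and compare---but it contains a structural gap and is missing the key explicit construction that does the real work.

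The structural gap is your assumption that both 2-Hilbert bundles can be compared ``on the same cover''. They cannot, at least not directly: the associated bundle $\stringgerbe(M)\times_{\String(d)}A$ is presented over the locally split map $P_{\hat x}\Spin(M)\to M$ with the \emph{trivial} von Neumann algebra bundle $\underline{A}$, whereas the Stolz--Teichner stringor bundle is presented over $P_xM\to M$ with the \emph{nontrivial} bundle $\mathcal{A}=P\Spin(M)\times_{P\Spin(d)}A$. The comparison must therefore be a \emph{refinement} in the sense of \cref{def:refinement}: a map $\rho:P_{\hat x}\Spin(M)\to P_xM$ between the covers (the footpoint projection), an algebra isomorphism $\varphi:\underline{A}\to\rho^*\mathcal{A}$, and a bimodule map $u$ over $\rho^{[2]}$, satisfying a compatibility diagram over triple fibre products. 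Your plan does not identify this refinement structure.

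The missing key construction is the explicit formula for $u$. The bimodule bundle on the associated side is $\Mod_{\Rep}(P)=(P\times L^2(A))/\String(d)_s$, with $P$ as in \cref{EqBundleQ}; the bimodule on the stringor side is $\cup^*\mathcal{S}_{LM}$. The map is
\[
u:[\beta_1,\beta_2,\Phi,\gamma,\xi]\longmapsto[\Phi,\,L^2(\omega_\gamma)^*\xi],
\]
and the substance of the proof lies in two computations: first, checking that $u$ is well-defined under the $\String(d)_s$-equivalence relation (this uses the intertwining properties of $\Omega'$, the relation $\Omega(i(\gamma))=L^2(\omega_\gamma)$, and the modular conjugation $J$ in a nontrivial way); second, verifying that $u$ intertwines the bundle gerbe product $\Mod_{\Rep}(\mu)$ with the fusion product $\Upsilon$ (\cref{prop:fusion}). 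The latter is the diagram you flag as the ``main obstacle'', and it is indeed a careful chase through the defining diagram \cref{DefiningDiagramMonoidalityMorphism} for the monoidality of $\Mod$, the characterization \cref{Naturality} of $\chi$, and the unique characterization of $\Upsilon$ from \cref{unique-fusion-product}. Your appeal to ``naturality of both constructions'' does not substitute for these computations; in particular, well-definedness of $u$ is not automatic and is where the specific formulas for the stringor representation enter.
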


Our work joins the main forces of the above mentioned two approaches and provides a step towards a full picture of string geometry.  
For one, it shows the relevance of the stringor representation \cref{RepresentationIntro} for string geometry. At the same time, it provides justification for Stolz-Teichner's  description of the stringor bundle, by showing its equivalence to a structure obtained in a completely different but probably conceptually clearer way.   
Last but not least, we have by now established a perfect analogy to the construction of the spinor bundle as an associated vector bundle in spin geometry, in which the stringor representation \cref{RepresentationIntro} plays the role of the spinor representation and thus deserves its name.
The new perspective on the stringor bundle as an associated 2-Hilbert bundle may be helpful in the future for studying its spaces of sections, \quot{stringors}, and for studying differential operators on such spaces.

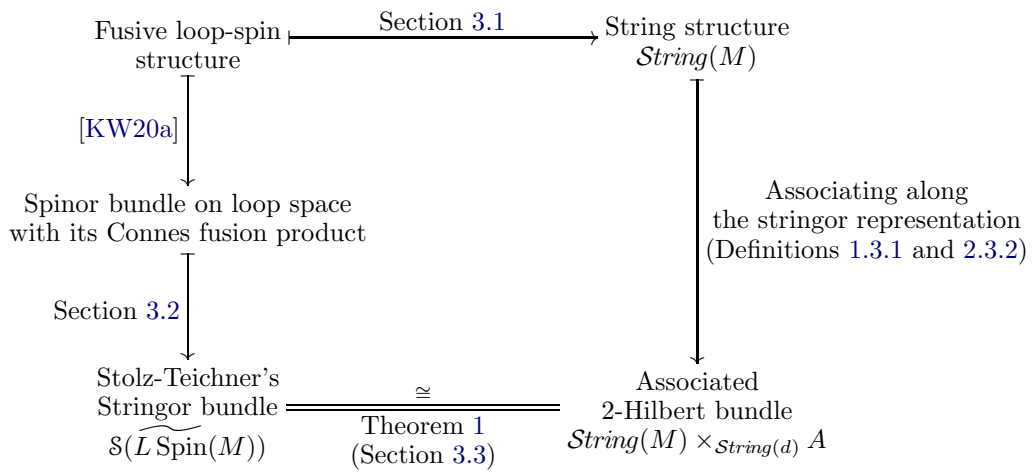
\begin{figure}[h]
\begin{equation*}
\hspace{6em}\xymatrix@R=4em@C=7em{\txt{Fusive loop-spin\\structure } \ar@{|->}[r]^{\txt{\cref{SubsectionStringStructuresFusiveLoopSpin}}} \ar@{|->}[d]_-{\txt{\cite{Kristel2019c}}} & \txt{String structure\\$\stringgerbe(M)$} \ar@{|->}[dd]^{\txt{Associating along\\the\ stringor representation\\(\cref{DefinitionStringorRep,def:associated2vectorbundle})}}   \\  \txt{Spinor bundle on loop space\\with its Connes fusion product} \ar@{|->}[d]_{\txt{\cref{sec:stringor}}} \\ \txt{Stolz-Teichner's \\Stringor bundle\\$\mathscr{S}(\widetilde{L\Spin}(M))$} \ar@{=}[r]_-{\txt{\cref{MainTheorem}\\(\cref{sec:identification})}}^-{\cong}  & \txt{Associated\\2-Hilbert bundle\\ $\stringgerbe(M) \times_{\String(d)}A$} }
\end{equation*}
\caption{A schematic description of our constructions, and where to find them. The commutativity of the diagram is the statement of our main result \cref{MainTheorem}.}
\label{Figure}
\end{figure}

This article is organized as follows. In \cref{sec:introrep} we recall the required details about von Neumann algebras, Connes fusion, and the stringor representation from our paper \cite{Kristel2023}. \Cref{sec:vonNeumannbundles} is devoted to 2-Hilbert bundles, and contains a general construction of associated 2-Hilbert bundles. 
In \cref{sec:ass2vb} we describe the Stolz-Teichner stringor bundle as a 2-Hilbert bundle, and prove our main theorem. 
We include three appendices: in  \cref{sec:convenientgeometry}  we recall 2-group bundles and bundle gerbes. In  \cref{sec:strstr} we compare two different notions of string structures involved into our constructions: fusive spin structures on loop space and bundle gerbes for the string 2-group, and we give a new direct construction to pass from the first notion to the second. In  \cref{sec:riggedtocontinuous} we establish a general relation between rigged von Neumann algebra bundles and bimodules as used in \cite{Kristel2019c} and their continuous versions established in this article; which is used in order to transfer the partial results of \cite{Kristel2019c} about the stringor bundle into the present setting. \cref{Figure} provides a schematic overview.

\paragraph{Acknowledgements. } We would like to thank Severin Bunk, Andr\'e Henriques, and Peter Teichner for helpful discussions. 
PK gratefully acknowledges support from the Pacific Institute for the Mathematical Sciences in the form of a postdoctoral fellowship. 
ML gratefully acknowledges support from SFB 1085 ``Higher invariants''.

\setsecnumdepth{2}

\section{The stringor representation}

\label{sec:introrep}

The purpose of this section is to recall from \cite{Kristel2023} the definition of the automorphism 2-group of a von Neumann algebra, the string group,  and the stringor representation, which is a homomorphism between these two  2-groups.

\subsection{The  automorphism 2-group of a von Neumann algebra}
\label{SectionAUTA}

Let $A$ be a von Neumann algebra and let $\Aut(A)$ be the group of $*$-automorphisms of $A$.
We recall that every $*$-automorphism $\theta$ is automatically continuous with respect to the ultraweak topology, and hence is the dual map of some isometric automorphism of the predual $A_*$. 
The group $\Aut(A)$ is a topological group with \emph{Haagerup's u-topology}, which is the topology induced by identifying $\Aut(A)$ with a subgroup of the isometry group of the predual $A_*$, equipped with the strong topology.

If $H$ is an $A$-$B$-bimodule (\ie a Hilbert space together with commuting $*$-representations of $A$ and $B^{\mathrm{op}}$) and $\theta_1 \in \Aut(A)$, $\theta_2 \in \Aut(B)$, we say that a unitary $U \in \U(H)$ is \emph{intertwining along $\theta_1$ and $\theta_2$} (which is short for \emph{left intertwining along $\theta_1$} and \emph{right intertwining along $\theta_2$}), if
\begin{equation}
  U(a \lact \xi \ract b) = \theta_1(a) \lact U\xi \ract \theta_2(b), \qquad a, b \in A, ~~ \xi \in H.
\end{equation}
We denote by
\begin{equation}
\label{DefinitionNH}
  I(H) \subseteq \Aut(A) \times \U(H) \times \Aut(B)
\end{equation}
the group of triples $(\theta_1, U, \theta_2)$ such that $U$ is intertwining along $\theta_1$ and $\theta_2$.
The group $I(H)$ is a topological group with the subspace topology, where the automorphism groups carry the u-topology and $\U(H)$ carries the strong topology.
By definition, the maps 
\begin{equation}
\label{SourceTargetMaps}
t_H: I(H) \to \Aut(A), \qquad s_H: I(H) \to \Aut(B),
\end{equation}
given by projection onto the left, respectively right factor, are continuous.

Canonically associated to $A$ is a Hilbert space $L^2(A)$, called the \emph{non-commutative $L^2$-space} or \emph{standard bimodule} \cite{HaagerupStandardForm}.
It comes with various extra structures, of which the following are relevant for the purposes of this paper:
\begin{enumerate}[(i)]
\item $L^2(A)$ is a faithful $A$-$A$-bimodule, with the property that any bounded operator $x \in \B(L^2(A))$ that commutes with the left (right) action of $A$ is given by right (left) multiplication with an element of $A$.
\item $L^2(A)$ is equipped with an anti-unitary involution $J$, called \emph{modular conjugation}, which satisfies
\begin{equation}
\label{RightActionModularConjugation}
  \xi \ract a = J (a^* \lact J\xi), \qquad a \in A,\; \xi \in L^2(A).
\end{equation}

\item
The association $A \mapsto L^2(A)$ is functorial on the category of von Neumann algebras and $*$-isomorphisms.
\end{enumerate}

Since $L^2(A)$ is a faithful module, the projection $I(L^2(A)) \to \U(L^2(A))$ onto the middle factor is injective.
Its image is denoted by
\begin{equation*}
  N(A) = \big\{ U \in \U(L^2(A)) \mid \exists \theta_1, \theta_2 \in \Aut(A) \text{ such that $U$ is intertwining along $\theta_1$ and $\theta_2$ } \big\}.
\end{equation*}
It turns out that the map $I(L^2(A)) \to N(A)$ is in fact a homeomorphism when $N(A) \subset \U(L^2(A))$ carries the subspace topology; see Remark~B.9 of \cite{ludewig2023spinor} or Lemma A.18 in \cite{ConformalNetsI}.
Precomposing the maps \cref{SourceTargetMaps} with the inverse of this homeomorphism, we obtain maps
\begin{equation}
\label{SourceTargetMaps2}
  s_{\Aut(A)}, t_{\Aut(A)} : N(A) \to \Aut(A).
\end{equation}
Explicitly, if $U \in N(A)$ is intertwining along $\theta_1$ and $\theta_2$, then $t_{\Aut(A)}(U) = \theta_1$ and $s_{\Aut(A)} (U)= \theta_2$. This can be reformulated to say that
\begin{equation}
\label{explicit-t}
U(a \lact U^{*}\xi) = \theta_1(a) \lact \xi,   
\qquad\text{and}\qquad 
U( U^{*}\xi \ract a) =  \xi \ract\theta_2(a) 
\end{equation}
whenever $a\in A$, $\xi \in L^2(A)$.
Moreover, it follows from \cref{RightActionModularConjugation} that $JUJ$ is intertwining along $\theta_2$ and $\theta_1$. 
Therefore, we have the relation
\begin{equation}
\label{stConjugation}
  t_{\Aut(A)}(U) = s_{\Aut(A)}(JUJ).
\end{equation}
Finally, it follows from the functoriality (iii) that for any $\theta \in \Aut(A)$, there is a unitary $L^2(\theta) \in \U(L^2(A))$ that commutes with the modular conjugation and is both left and right intertwining along $\theta$.
This provides a section
\begin{equation}
\label{CanonicalImplementation}
  L^2 : \Aut(A) \to N(A), \qquad \theta \mapsto L^2(\theta),
\end{equation}
called \emph{canonical implementation}, which is continuous and has closed image  \cite[Prop.~3.5]{HaagerupStandardForm}.

\medskip

We recall that a \emph{topological strict 2-group} is a groupoid $\mathcal{G}$ whose set $\mathcal{G}_0$  of objects and whose set $\mathcal{G}_1$ of morphisms are topological groups, and whose source map $s: \mathcal{G}_1 \to \mathcal{G}_0$, target map $t: \mathcal{G}_1 \to \mathcal{G}_0$, composition $\mathcal{G}_1 \ttimes st \mathcal{G}_1 \to \mathcal{G}_1$, identity map $i:\mathcal{G}_0 \to \mathcal{G}_1$, and inversion (w.r.t.\ composition) $\inv:\mathcal{G}_1 \to \mathcal{G}_1$ are all continuous group homomorphisms. 
A \emph{continuous  homomorphism} between topological strict 2-groups is  a functor whose assignments on objects and morphisms are continuous group homomorphisms.

It is convenient to note that in every topological strict 2-group the composition  and the inversion are already determined by the maps $s$, $t$ and $i$. 
Explicitly, they are given by
\begin{equation}
\label{MultiplicationInTermsOfist}
  X \circ Y = X i(s(X))^{-1} Y = X i(t(Y))^{-1} Y,
\end{equation}
whenever $X, Y \in \mathcal{G}_1$ are composable (\ie  $s(X) = t(Y))$, and by
\begin{equation}
\label{InversionInTermsOfist}
\inv(X)=i(s(X))X^{-1}i(t(X))\text{.}
\end{equation}
One can, conversely, define composition and inversion by these formulae, provided that the subgroups $\mathrm{ker}(s)\subset \mathcal{G}_1$ and $\mathrm{ker}(t)\subset \mathcal{G}_1$ commute. 
We refer to \cite{baez5,baez9} for a comprehensive treatment of (topological) 2-groups, and to \cite[\S 6]{Kristel2023} for more details about the formulae \cref{MultiplicationInTermsOfist,InversionInTermsOfist}.

\begin{definition}
\label{Automorphism2Group}
The \emph{unitary automorphism 2-group} $\mathcal{U}(A)$ of $A$ is the  topological strict 2-group with 
\begin{equation*}
\mathcal{U}(A)_0 := \Aut(A) \quand \mathcal{U}(A)_1 := N(A)\text{,}
\end{equation*}
source and target maps $s_{\Aut(A)}$ and $t_{\Aut(A)}$ from \cref{SourceTargetMaps2}, and identity map $i := L^2$ from  \cref{CanonicalImplementation}.
\end{definition}

\begin{remark}
In order to see that this suffices to define a strict 2-group, we need to check that $\ker(s)$ and $\ker(t)$ are commuting subgroups of $N(A)$.
We observe that $\ker(s)$  consists of unitaries $U$ on $L^2(A)$ that commute with the right $A$-action.
By property (i) of $L^2(A)$, this means that each such $U$ is left multiplication by some element of $A$.
Similarly, an element $V \in \ker(t)$ is right multiplication by some element of $A$.
Since the left and right $A$-actions commute, this shows that $U$ and $V$ commute.
We hence can define composition and inversion of $\mathcal{U}(A)$ by \cref{MultiplicationInTermsOfist,InversionInTermsOfist};
for instance, we have
\begin{equation}
\label{CompositionMapAUTA}
U \circ  V = U L^2(\theta)^* V,
\end{equation}
if $U$ is right intertwining and $V$ is left intertwining along $\theta$.
\end{remark}

The data of a topological strict 2-group $\mathcal{G}$ are conveniently encoded in its associated \emph{crossed module}.
A topological crossed module is a pair of topological groups $G$ and $H$, together with a continuous group homomorphism $t: H \to G$ and a continuous action $\alpha: G \times H \to H$ of $G$ on $H$ satisfying 
\begin{equation} \label{CrossedModuleActions}
t(\alpha(g,h))=gt(h)g^{-1}
\quand
\alpha(t(h),x)=hxh^{-1}
\end{equation}
for all $g\in G$ and $h,x\in H$. 
The crossed module associated to a topological strict 2-group $\mathcal{G}$ is $t: \mathcal{G}_s \to \mathcal{G}_0$, where 
\begin{equation*}
\mathcal{G}_s := \ker(s) \subset \mathcal{G}_1
\end{equation*}
 and  $\mathcal{G}_0$ acts on $\mathcal{G}_s$ by conjugation with $i(g)$. This procedure establishes an equivalence of categories between topological strict 2-groups and topological crossed modules, see \cite[Thm.~2]{BrownSpencerCrossed}, \cite[Thm.~5.13]{Fiore} or \cite{porst2008strict}.

The crossed module associated to the unitary automorphism 2-group $\mathcal{U}(A)$ of a von Neumann algebra $A$, denoted by $\mathscr{U}(A)$, is $t_{\Aut(A)}:\U(A) \to \Aut(A)$, where $\U(A)\subset A$ denotes the group of unitary elements of $A$ equipped with the ultraweak topology, and $\Aut(A)$ acts on $\U(A)$ by evaluation; see  \cite[Prop. 6.6]{LudewigWaldorf2Group}.

\begin{definition}%[2-group representation]
        \label{representationoftopological2group}
        A \emph{unitary representation} of a topological strict  2-group $\mathcal{G}$ on a von Neumann algebra $A$ is a continuous homomorphism of topological strict 2-groups
        \begin{equation*}
                \Rep: \mathcal{G} \to \mathcal{U}(A)\text{.}
        \end{equation*}
\end{definition}

Explicitly, $\Rep$ consists of continuous group homomorphisms $\Rep_0 : \mathcal{G}_0 \to \Aut(A)$ and $\Rep_1 : \mathcal{G}_1 \to N(A)$ with the properties that
\begin{equation}
\label{CompatibilityStrictRepresentation}
  \Rep_0 \circ s_\mathcal{G} = s_{\Aut(A)} \circ \Rep_1, \qquad \Rep_0 \circ t_\mathcal{G} = t_{\Aut(A)} \circ \Rep_1, \qquad \Rep_1 \circ i_\mathcal{G} = L^2 \circ \Rep_0.
\end{equation}
By formulae \cref{MultiplicationInTermsOfist,InversionInTermsOfist}, the conditions in \cref{CompatibilityStrictRepresentation} imply automatically that $\Rep_0$ and $\Rep_1$  intertwine the composition and inversion maps.

\subsection{Twisted standard bimodules and Connes fusion}

In this section, we compare the (strict) unitary automorphism 2-group $\mathcal{U}(A)$ of \cref{Automorphism2Group} with the abstract  automorphism 2-group of the object $A$ in  the bicategory $\vNAlg^{\mathrm{bi}}$ of von Neumann algebras \cite{Landsman,Brouwer2003}. For finite-dimensional algebras, we carried out an analogous comparison in \cite[Prop. 2.3.1]{Kristel2022}.
Along the way, we provide some results on twisted standard bimodules that will be needed subsequently.

Given von Neumann algebras $A$ and $B$, we denote by $A\text{-}B\text{-}\Bimod$ the category of $A$-$B$-bimodules and unitary intertwiners. It is the category of 1-morphisms $B\to A$ in the bicategory of von Neumann algebras, i.e.,
\begin{equation*}
\Homcat_{\vNAlg^{\mathrm{bi}}}(B,A) :=A\text{-}B\text{-}\Bimod\text{.} 
\end{equation*}
Composition in the bicategory $\vNAlg^{\mathrm{bi}}$ is given by the \emph{Connes fusion product}, which is a functor
\begin{equation*}
\boxtimes : A\text{-}B\text{-}\Bimod \times B\text{-}C\text{-}\Bimod \to A\text{-}C\text{-}\Bimod,
\end{equation*}
and should be viewed as the appropriate ``tensor product over $B$'' for bimodules \cite{Landsman,Brouwer2003,thom11}. 
In particular, the Connes fusion product turns $A\text{-}A\text{-}\Bimod$ into a monoidal category.

There are several (more or less involved) explicit constructions of the Connes fusion product, but in this paper, we only need its abstract properties.
In particular, its functoriality means that two unitary intertwiners $U : H \to H^\prime$ and $V : K \to K^\prime$ have a fusion product $U \boxtimes V : H \boxtimes K \to H^\prime \boxtimes K^\prime$.
The fusion product $U \boxtimes V$ is, more generally, also defined if $H$ and $H^\prime$ are right $B^\prime$-modules, $K$ and $K^\prime$ are left $B^\prime$-modules, and $U$, $V$ intertwine the right (respectively left) module actions along some $*$-isomorphism $\varphi: B \to B^\prime$ (see \cite[Proposition~A.2.3]{Kristel2019c} or \cite[\S A.3]{ludewig2023spinor}).
In fact, this generalized Connes fusion product for intertwiners provides von Neumann algebras and their bimodules with the structure of a \emph{double category}; see \cite{shulman1}.

%\begin{remark}
%\label{RemarkExplicitConnesFusion}
%There are several different constructions of this product, but to give some impression of its definition and to facilitate the proof of \cref{LemmaCommutativeSquare} below, we briefly recall a possible definition in this remark.
%   If $H$ is an $A$-$B$-bimodule and $K$ is a $B$-$C$-bimodule, one considers elements of the form $x \boxtimes \xi$, where $x \in \B(L^2(B), H)$ is a bounded operator that commutes with the right $B$-action and $\xi \in K$.
%   The Connes fusion product $H \boxtimes K$ is then the Hilbert space completion of linear combinations of elements of this form with respect to a certain inner product.
%\\
%The fusion product is functorial, meaning in particular that two unitary intertwiners $U : H \to H^\prime$ and $V : K \to K^\prime$ have an associated fusion product $U \boxtimes V : H \boxtimes K \to H^\prime \boxtimes K^\prime$.
%More generally, the fusion product can be defined for intertwiners along $*$-isomorphisms \cite[Proposition~A.2.3]{Kristel2019c}:
%If $H^\prime$ is a right $B^\prime$-module, $K^\prime$ is a left $B^\prime$-module and $U$, $V$ intertwine the right (respectively left) module actions along some $*$-isomorphism $\varphi: B \to B^\prime$, then their  fusion product is given by 
%\begin{equation}
%\label{DefinitionConnesFusion}
%(U\boxtimes V)(x \boxtimes \xi) = U x L^2(\varphi)^* \boxtimes V\xi \text{.}
%\end{equation}
%\end{remark}

For $\theta \in \Aut(A)$, we denote by $L^2(A)_\theta$ the $A$-$A$-bimodule with underlying Hilbert space $L^2(A)$, the standard left action, but right action modified by $\theta$; we refer to $L^2(A)_\theta$ as a \emph{twisted standard bimodule}.
We consider the functor
\begin{equation}
\label{StrictToNonStrictVersion}
\mathcal{T} : \mathcal{U}(A) \to A\text{-}A\text{-}\Bimod
\end{equation}
that sends an automorphism $\theta$ to the twisted standard bimodule $L^2(A)_\theta$, while an element $U \in N(A)$ that is intertwining along $\theta_1$ and $\theta_2$ is sent to the intertwiner $\mathcal{T}(U) := L^2(\theta_1) U^* : L^2(A)_{\theta_2} \to L^2(A)_{\theta_1}$.
We emphasize that this is an ``honest'' intertwiner, in that it is intertwining along the identity automorphism on both sides.
%
\begin{comment}
It is indeed an intertwiner, as
\begin{equation*}
\begin{aligned}
\mathcal{T}(U)(a \lact \xi \ract_{\theta_2} b) =   L^2(\theta_1) U^*(a \lact \xi \ract_{\theta_2} b) 
  &= L^2(\theta_1) U^*(a \lact \xi \ract \theta_2(b)) \\
  &= L^2(\theta_1)( (\theta_1)^{-1}(a) \lact U^*\xi \ract b)\\
  &= a \lact (L^2(\theta_1) U^*\xi) \ract \theta_1(b) \\
  &= a \lact L^2(\theta_1) U^*\xi \ract_{ \theta_1} b\\
  &= a \lact \mathcal{T}(U)\xi \ract_{ \theta_1} b
\end{aligned}
\end{equation*}
The assignment is functorial, since for $(\theta_2, V, \theta_3) \in I(L^2(A))$, we have
\begin{equation*}
\begin{aligned}
 \mathcal{T}(U)\mathcal{T}(V)  = L^2(\theta_1) U^*  L^2(\theta_2) V^* 
 &= L^2(\theta_1) \underbrace{U^*  L^2(\theta_2)}_{\in A} \underbrace{V^* L^2(\theta_2)}_{\in A^\prime} L^2(\theta_2)^*\\
  &= L^2(\theta_3)  \underbrace{V^* L^2(\theta_2)}_{\in A^\prime} \underbrace{U^* L^2(\theta_2)}_{\in A} L^2(\theta_2)^*\\
  &= L^2(\theta_3)  \big(U L^2(\theta_2)^* V)^*\\
  &= L^2(\theta_3)(U \circ V)^*\\
  &= \mathcal{T}(U \circ V),
\end{aligned}
\end{equation*}
using \eqref{CompositionMapAUTA}.
This is ``Peter's bizarre formula''.
\end{comment}
%

If $\theta_1, \theta_2 \in \Aut(A)$, then there is a canonical isomorphism
\begin{equation}
\label{IsoOfTwistedStandardForms}
  \chi_{\theta_1,\theta_2} : L^2(A)_{\theta_1} \boxtimes L^2(A)_{\theta_2} \to L^2(A)_{\theta_1\circ\theta_2},
\end{equation}
see \cite[Example A.6]{ludewig2023spinor} for its definition in terms of a particular model for the Connes fusion product.
Axiomatically, the isomorphisms \cref{IsoOfTwistedStandardForms} can be characterized by the properties that (i) when one of $\theta_1$ or $\theta_2$ is the identity, they coincide with the usual unitor for the Connes fusion product, and (ii), when given unitaries
\begin{equation*}
  U_1 : L^2(A)_{\theta_1} \to L^2(A)_{\theta_1^\prime}, \qquad U_2 : L^2(A)_{\theta_2} \to L^2(A)_{\theta_2^\prime},
\end{equation*}
which are right intertwining (respectively left intertwining) along some automorphism $\varphi$, the isomorphisms \cref{IsoOfTwistedStandardForms} fit into the commutative diagram
\begin{equation}
\label{Naturality}
                \alxydim{@C=4em}{
                L^2(A)_{\theta_1} \boxtimes L^2(A)_{\theta_2} \ar[r]^-{\chi_{\theta_1,\theta_2}} \ar[d]_{U_1 \boxtimes U_2} & L^2(A)_{\theta_1\circ\theta_2} \ar[d]^{U_1 L^2(\theta_1)L^2(\varphi)^*U_2 L^2(\theta_1)^*} \\ L^2(A)_{\theta_1^\prime}\boxtimes L^2(A)_{\theta_2^\prime}  \ar[r]_-{\chi_{\theta_1^\prime,\theta_2^\prime}} & L^2(A)_{\theta_1^\prime \circ \theta_2^\prime\text{.}} 
                }
\end{equation}
%
\begin{comment}
It suffices to verify commutativity on elements of the form $x \boxtimes \xi$, where $\xi \in L^2(A)$ and $x \in \B(L^2(A))$ is a right intertwiner along $\theta_1$.
Here we calculate
\begin{align*}
  \chi_{\theta_1^\prime, \theta_2^\prime} \circ( U_1 \boxtimes U_2) (x \boxtimes \xi) 
  &= \chi_{\theta_1^\prime, \theta_2^\prime} U_1 x L^2(\varphi)^* \boxtimes U_2 \xi \\
  &= U_1 x L^2(\varphi)^* U_2 \xi\\
  &= U_1 L^2(\theta_1) \underbrace{L^2(\theta_1)^*x}_{\in A} \underbrace{L^2(\varphi)^* U_2}_{\in A^\prime} \xi\\
  &= U_1 L^2(\theta_1)  L^2(\varphi)^* U_2 L^2(\theta_1)^*x  \xi\\
  &= U_1 L^2(\theta_1)  L^2(\varphi)^* U_2 L^2(\theta_1)^* \chi_{\theta_1, \theta_2}(x \boxtimes \xi).
\end{align*}
Here we used that since $x$ is right intertwining along $\theta$, $L^2(\theta)^*x$ commutes with the right $A$-action, hence is given by left multiplication with an element of $A$, while $L^2(\varphi)^* U_2$ commutes with the left $A$-action.
\end{comment}
%
Taking $\varphi$ to be the identity, this shows that the isomorphisms \cref{IsoOfTwistedStandardForms} are the components of a natural transformation  $\chi$.
Indeed, let $U_i \in N(A)$ be intertwining along $\theta_i^\prime$ and $\theta_i$. 
Then, $\mathcal{T}(U_i) = L^2(\theta_i^\prime) U_i^*$ is an intertwiner from $L^2(A)_{\theta_i}$ to $L^2(A)_{\theta_i^\prime}$ and $\mathcal{T}(U_1U_2) = L^2(\theta_1^\prime\theta_2^\prime)U_2^*U_1^*$ is an intertwiner from $L^2(\theta_1\theta_2)$ to $L^2(\theta_1^\prime\theta_2^\prime)$.
Since
\begin{equation*}
\begin{aligned}
\mathcal{T}(U_1) L^2(\theta_1)\mathcal{T}(U_2) L^2(\theta_1)^* = L^2(\theta_1^\prime) \underbrace{U_1^*L^2(\theta_1)}_{\in A}\underbrace{L^2(\theta_2^\prime) U_2^*}_{\in A^\prime}L^2(\theta_1)^* = L^2(\theta_1^\prime\theta_2^\prime)U_2^*U_1^* = \mathcal{T}(U_1U_2),
\end{aligned}
\end{equation*}
the diagram \cref{Naturality} becomes the claimed naturality diagram.
The  isomorphisms \cref{IsoOfTwistedStandardForms} satisfy, moreover, the obvious associativity condition for triples of automorphisms (involving the associator of the Connes fusion product), and hence turn the functor $\mathcal{T}$ into a  monoidal functor, in other words, a   homomorphism of 2-groups.

It is easy to check that $\mathcal{T}$ is fully faithful.
Hence, if we denote by $\AUT(A):= A\text{-}A\text{-}\Bimod$ the automorphism 2-group of the von Neumann algebra $A$ as an object in the bicategory $\vNAlg^{\mathrm{bi}}$ of von Neumann algebras, $\mathcal{T}$ embeds our strict automorphism 2-group $\mathcal{U}(A)$ as a sub-2-group of $\AUT(A)$. Moreover, going through the Murray-von-Neumann classification of factors, one obtains that $\mathcal{T}$ is essentially surjective if $A$ is a factor of type I or type III. Hence, in these cases, the strict 2-group $\mathcal{U}(A)$ is equivalent to the general automorphism 2-group $\AUT(A)$.

\subsection{The string 2-group and the stringor representation}
\label{SectionStringorRep}

For a smooth manifold $M$, we denote by $PM$ the space of smooth paths $\beta : [0, \pi] \to M$, which are \emph{flat} at the end points, i.e., all derivatives vanish (in some, hence all local charts).
For $x \in M$, we write $P_x M \subset PM$ for the subspace of paths $\beta$ that additionally satisfy $\beta(0) = x$.
Moreover, we denote by $LM$ the space of smooth loops $S^1 \to M$, where we set $S^1 = \R / 2 \pi \Z$. 
We denote by $PM^{[k]}$ the $k$-fold fibre product of the end-points-map $PM \to M \times M$, and consider the map
\begin{equation}
  \label{CupMap}
  \cup : PM^{[2]} \to LM, \qquad \; (\beta_1 \cup \beta_2)(t) := \begin{cases} \beta_1(t)  & t \in [0, \pi] \\ \beta_2(2 \pi - t) & t \in [\pi, 2\pi] \end{cases}
\end{equation}
that combines two paths $\beta_1,\beta_2$ with common endpoints to a loop, which is automatically smooth since the paths are flat. 
All path spaces discussed above have canonical structures of infinite-dimensional manifolds.
In particular, for a Lie group $G$, we have $P_e G$, the space of flat paths starting at the identity element $e$. 
Both $P_e G$ and $LG$ are infinite-dimensional (Fr\'echet) Lie groups.

Let $\widetilde{L\Spin}(d)$ be a basic central extension of $L\Spin(d)$. 
Up to isomorphism of central extensions, there are two possible choices, and each is unique up to unique isomorphism \cite{LudewigWaldorf2Group}.
Both of these choices give rise to canonically isomorphic string groups \cite{LudewigWaldorf2Group}.
It is a fact that $\widetilde{L\Spin}(d)$ (in fact, \emph{any} central extension of $L\Spin(d)$ \cite{LudewigWaldorf2Group}) admits a unique \emph{fusion factorization} \cite[\S 5.2]{Kristel2019}, i.e., a Lie group homomorphism
\begin{equation}
\label{FusionFactorization}
  i : P \Spin(d) \to \cup^*\widetilde{L\Spin}(d)
\end{equation}
covering the diagonal map $P \Spin(d) \to P\Spin(d)^{[2]}$.
Explicitly, the elements of $\cup^* \widetilde{L\Spin}(d)$ have the form $(\gamma_1, \gamma_2, X)$, with $\gamma_1, \gamma_2 \in P \Spin(d)$ such that $\gamma_1(0)=\gamma_2(0)$,  $\gamma_1(\pi) = \gamma_2(\pi)$ and $X \in \widetilde{L\Spin}(d)$ projecting to $\gamma_1 \cup \gamma_2$.

\begin{definition}
\label{DefinitionStringorRep}
  The \emph{string  2-group $\String(d)$} is the strict Lie 2-group with
  \begin{equation*}
    \String(d)_1 := \cup^* \widetilde{L\Spin}(d)|_{P_e\Spin(d)^{[2]}} \quand \String(d)_0 := P_e \Spin(d),
  \end{equation*}
   with source and target maps 
  \begin{equation*}
s_{\String(d)}(\gamma_1, \gamma_2, X) := \gamma_2\quand t_{\String(d)}(\gamma_1, \gamma_2, X) := \gamma_1,
  \end{equation*}
  and with identity map given by the restriction of the fusion factorization $i$ to $P_e \Spin(d)$.
  
\end{definition}

\begin{remark}
The structure in \cref{DefinitionStringorRep} determines a Lie 2-group via \cref{InversionInTermsOfist,MultiplicationInTermsOfist} because $\widetilde{L\Spin}(d)$ is disjoint commutative; see \cite{LudewigWaldorf2Group} for a detailed treatment. 
In particular, the composition, determined by \cref{MultiplicationInTermsOfist}, is given by
\begin{equation}
\label{composition-in-string-2-group}
(\gamma_1,\gamma_2,X_{12}) \circ (\gamma_2,\gamma_3,X_{23}) = (\gamma_1,\gamma_3,X_{12}i(\gamma_2)^{*}X_{23})\text{.}
\end{equation}
\end{remark}

\begin{remark}
\label{projection-to-spin}
The string 2-group is a covering group of the spin group, in the sense that there is a strict 2-group homomorphism
\begin{equation*}
q:\String(d) \to \Spin (d)_{\dis}\text{,} 
\end{equation*}
where $\Spin(d)_{\dis}$ denotes the standard way to view a group as a 2-group (set $(\Spin(d)_{\dis})_0$ = $(\Spin(d)_{\dis})_1 $ = $\Spin(d)$, and $s=t=i=\id$). The homomorphism $q$ is given by $q_0 := \ev_\pi$, the evaluation of paths at their endpoint.  Under geometric realization, $q$ becomes a 3-connected covering map \cite{baez9,LudewigWaldorf2Group}. 
\end{remark}

In \cite{Kristel2023}, we describe a representation of the string 2-group $\String(d)$ on a 2-Hilbert space, whose underlying von Neumann algebra is the hyperfinite type III$_1$ factor $A$.
We will not need the explicit construction of this representation, but we now recall the ingredients needed for the purposes of this paper.
The main players are group homomorphisms
\begin{align}
\label{KleinOmega}
  \omega : P \Spin(d) &\to \Aut(A),\\
  \label{GrossOmega}
  \Omega : \widetilde{L\Spin}(d) &\to N(A),
\end{align}
which are continuous with respect to the u-topology on $\Aut(A)$ and the strong topology on $N(A)$, respectively.
A concrete definition of $\omega$ is  in  \cite[Eq. 5.6]{Kristel2023}, and of $\Omega$  in \cite[Eq. 5.3, Lemma~5.1]{Kristel2023}. 

We will use the following two properties of the maps $\omega$ and $\Omega$. If $X \in \widetilde{L\Spin}(d)$ lies over $\gamma_1 \cup \gamma_2 \in L\Spin(d)$, then  \cite[Theorem 6.9]{Kristel2023} shows that $t_{\Aut(A)}(\Omega(X)) = \omega_{\gamma_1}$ and $s_{\Aut(A)}(\Omega(X)) = \omega_{\gamma_2}$, for the maps $s_{\Aut(A)}$ and $t_{\Aut(A)}$ from \cref{SourceTargetMaps2}.
 In other words, the unitary map  $\Omega(X) \in N(A)\subset \U(L^2(A))$ is left intertwining along $\omega_{\gamma_1}$ and right intertwining along $\omega_{\gamma_2}$. 
In formulas,
\begin{equation}
\label{IntertwiningOmegaomega}
\Omega(X)(a \lact \xi \ract b) = \omega_{\gamma_1}(a) \lact \Omega(X)\xi \ract \omega_{\gamma_2}(b)\text{.}
\end{equation}
Moreover, \cite[Thm. A.9]{Kristel2023} implies that
\begin{equation}
\label{OmegaAndI}
  \Omega(i(\gamma)) = L^2(\omega_\gamma)
\end{equation}
for all $\gamma \in P\Spin(d)$, where $i$ is the fusion factorization \cref{FusionFactorization} and $L^2(\omega_\gamma)$ is the canonical implementation \cref{CanonicalImplementation} of the automorphism $\omega_{\gamma}$.

\begin{definition}
\label{def-stringor-rep}
The \emph{stringor representation}
\begin{equation*}
  \Rep : \String(d) \to \mathcal{U}(A)
\end{equation*}
 consists of the group homomorphisms
\begin{equation*}
  \Rep_0 := \omega|_{P_e \Spin(d)}: P_e \Spin(d) \to \Aut(A)
\qquad \text{and} \qquad
  \Rep_1 := \Omega: \widetilde{L\Spin}(d) \to N(A)\text{.}
\end{equation*}
\end{definition}

\begin{remark}It follows directly from the properties of $\omega$ and $\Omega$ alluded to above that $\Rep_0$ and $\Rep_1$ satisfy the compatibility relations of \eqref{CompatibilityStrictRepresentation} which ensure that $\Rep$ is indeed a  homomorphism of strict 2-groups, namely
\begin{align*}
\label{SourceTargetCompatibility}
  (s_{\Aut(A)} \circ \Rep_1)(\gamma_1,\gamma_2,X) &= \omega_{\gamma_2}=\mathcal{R}_0(s(\gamma_1,\gamma_2,X))
  \\
  (t_{\Aut(A)} \circ \Rep_1)(\gamma_1,\gamma_2,X) &= \omega_{\gamma_1}=\mathcal{R}_0(t(\gamma_1,\gamma_2,X))
\end{align*}
as well as
\begin{equation}
\label{stringor-rep-comp-with-fusion}
\Rep_1 \circ i = L^2 \circ \Rep_0\text{.}
\end{equation}
\end{remark}

Additionally to the stringor representation, we will consider the group homomorphism
\begin{equation}
\label{representation-on-standard-bimodule}
\Omega': \widetilde{L\Spin}(d) \to \U(L^2(A)),\quad \Omega'(X) := J\Omega(X)J\text{,}
\end{equation}
which establishes a unitary representation of $\widetilde{L\Spin}(d)$ on the standard bimodule $L^2(A)$.
The conjugation by $J$ achieves an exchange of the left/right intertwining properties, so that we get
\begin{equation}
\label{IntertwiningOmegaprime}
\Omega'(X)(a \lact \xi \ract b)= \omega_{\gamma_2}(a) \lact \Omega'(X)( \xi )\ract \omega_{\gamma_1}(b)
\end{equation}
whenever $X$ projects to $\gamma_1 \cup\gamma_2$. This will be required to obtain a bimodule structure on the spinor bundle on loop space that is compatible with our conventions for 2-Hilbert  bundles; see \cref{sec:stringor}. 
We remark that relation \cref{OmegaAndI} persists to hold for $\Omega'$, as $J$ commutes with canonical implementation. 
\begin{comment}
Indeed,
\begin{align*}
\Omega'(X)(a \lact \xi \ract b) 
&= J\Omega(X)J(a \lact \xi \ract b)
\\&= J\Omega(X)J(a^{**} \lact JJ (\xi \ract b))
\\&= J\Omega(X)(J (\xi \ract b)\ract a^{*})
\\&= J(\Omega(X)(J (\xi \ract b))\ract \omega_{\gamma_2}(a^{*}))
\\&= J(\Omega(X)(b^{*}\lact J \xi )\ract \omega_{\gamma_2}(a^{*}))
\\&= J(\omega_{\gamma_1}(b^{*})\lact \Omega(X)( J \xi )\ract \omega_{\gamma_2}(a^{*}))
\\&= J(\omega_{\gamma_1}(b^{*})\lact JJ(\Omega(X)( J \xi )\ract \omega_{\gamma_2}(a^{*})))
\\&= J(\Omega(X)( J \xi )\ract \omega_{\gamma_2}(a^{*}))\ract \omega_{\gamma_1}(b)
\\&= \omega_{\gamma_2}(a) \lact J\Omega(X)( J \xi )\ract \omega_{\gamma_1}(b)
\\&= \omega_{\gamma_2}(a) \lact \Omega'(X)( \xi )\ract \omega_{\gamma_1}(b)
\end{align*} 
\end{comment}

%
%We now describe the completion 
%Extending a function $f \in H_0$ by zero on $[\pi, 2\pi]$, we obtain an isometric embedding $H_0 \subset H$.
%Given a Lagrangian $L \subset H$, let $\mc{F}_L$ be the \emph{Fock space} corresponding to $L$, i.e., the the Hilbert space completion of the exterior algebra $\Lambda L$ with respect to its canonical inner product.
%The algebraic Clifford algebra $\Cl(H)$ acts faithfully on $\mc{F}_L$ by bounded operators, and the image of the corresponding $*$-representation $\pi_L : \Cl(H) \to \B(H)$ is dense with respect to the ultraweak topology.
%By restriction, we obtain an action of the subalgebra $\Cl(H_0) \subset \Cl(H)$ on $\mc{F}_L$, and we denote by
%\begin{equation*}
%  A := \pi_L(\Cl(H_0))^{\prime\prime}
%\end{equation*}
%the von Neumann algebra closure of 

\section{2-Hilbert bundles}
\label{sec:genproc}
\label{sec:vonNeumannbundles}

In \cref{von-Neumann-algebra-bundles}, we define a bicategory of von Neumann algebra bundles over a topological space $X$, whose 1-morphisms are bimodule bundles. 
Viewing the base space as a variable, these  form a presheaf of bicategories. 
In \cref{stackification}, we argue that it is necessary to stackify this presheaf to obtain a sheaf of bicategories, or 2-stack. The objects in this 2-stack are our \emph{2-Hilbert bundles}.
In \cref{SectionAssociatedBimoduleBundles}, we introduce the \emph{associated 2-Hilbert bundle construction}, which produces a 2-Hilbert bundle $\mathcal{Q} \times_\mathcal{G} A$ over a space $X$ from a non-abelian bundle gerbe $\mathcal{Q}$ over  $X$ for a topological strict  2-group $\mathcal{G}$ and a continuous unitary representation $\mathcal{G} \to \AUT(A)$ of  $\mathcal{G}$ on a von Neumann algebra $A$.

\subsection{Von Neumann algebra bundles}

\label{von-Neumann-algebra-bundles}

Let $X$ be a topological space.
In this section, we define the bicategory $\vNAlgBdl^{\mathrm{bi}}(X)$ of von Neumann algebra bundles over $X$, focussing on the properties necessary for the present paper.
A more extensive treatment has been moved to a separate paper \cite[\S A\&B]{ludewig2023spinor}.

The objects of $\vNAlgBdl^{\mathrm{bi}}(X)$ are locally trivial von Neumann algebra bundles over $X$.
Such a bundle $\mathcal{A}$ consists of a von Neumann algebra $\mathcal{A}_x$ for each point $x \in X$, together with a collection of local trivializations $\phi : \mathcal{A}|_O \to O \times A$ ($O \subseteq X$ open, $A$ the typical fibre, a von Neumann algebra) such that the transition functions $\phi^\prime \circ \phi^{-1}$ are continuous when considered as maps $O \cap O^\prime \to \Aut(A)$; here $\Aut(A)$ carries the u-topology, as always.
We refer to \S B.1 of \cite{ludewig2023spinor} for a more extensive treatment of this notion.
For the purposes of this paper, we only need the following feature: 
Whenever $G$ is a topological group with a continuous group homomorphism $G \to \Aut(A)$ and $P$ is a principal $G$-bundle over $X$, then the associated bundle construction provides a von Neumann algebra bundle
\begin{equation}
\label{associated-von-Neumann-algebra-bundle}
  \mathcal{A} = P \times_G A\text{,}
\end{equation}
see Example~B.6 \cite{ludewig2023spinor}.

If $\mathcal{A}$, $\mathcal{B}$ are von Neumann algebra bundles, we denote by $\mathcal{A}\text{-}\mathcal{B}\text{-}\BimodBdl(X)$ the category of $\mathcal{A}$-$\mathcal{B}$-bimodule bundles, which serves as the category of morphisms $\mathcal{B} \to \mathcal{A}$ in $\vNAlgBdl^{\mathrm{bi}}(X)$.
Here, an $\mathcal{A}$-$\mathcal{B}$-bimodule bundle $\mathcal{H}$ is a continuous Hilbert  bundle whose fibres $\mathcal{H}_x$ carry the structure of an $\mathcal{A}_x$-$\mathcal{B}_x$-bimodule, and which admits local trivializations 
\begin{equation*}
  u: \mathcal{H}|_O \to O \times H
\end{equation*}
over open sets $O \subseteq X$, such that $H$ is a bimodule for the typical fibres of $\mathcal{A}$ and $\mathcal{B}$ and $u$ is intertwining along local trivializations of $\mathcal{A}$ and $\mathcal{B}$ \cite[Definition~B.6]{ludewig2023spinor}.
Such trivializations are called \emph{local bimodule trivializations}.
Morphisms between $\mathcal{A}$-$\mathcal{B}$-bimodules are Hilbert bundle homomorphisms that are fibrewise intertwiners.

\begin{example}
\label{ExTwistedRightAction}
 If $A$, $B$ are von Neumann algebras and $H$ is an $A$-$B$-bimodule, we obtain the trivial von Neumann algebras bundles $\underline{A} = X \times A$, $\underline{B} = X \times B$ and the trivial $\underline{A}$-$\underline{B}$-bimodule bundle $\underline{H}$ over $X$.
If moreover $\theta : X \to \Aut(B)$ is a continuous map, we denote by $\underline{H}_\varphi$ the $\underline{A}$-$\underline{B}$-bimodule bundle with total space $X \times H$ and bimodule action given by
\begin{equation*}
  (x, a) \lact (x, \xi) \ract (x, b) = \bigl(x, a \lact \xi \ract \theta(b)\bigr).
\end{equation*} 
\end{example}

\begin{example}
\label{StandardBimoduleBundle}
If $\mathcal{A}$ is a von Neumann algebra bundle, then $L^2(\mathcal{A})$ is the $\mathcal{A}$-$\mathcal{A}$-bimodule bundle whose fibre over $x$ is $L^2(\mathcal{A}_x)$, the standard bimodule of $\mathcal{A}_x$, with local trivializations given by $L^2(\varphi)$, where $\varphi$ is a local trivialization of $\mathcal{A}$.
\end{example}

In order to define the composition of 1-morphisms, it is important to restrict to the subcategory of bimodules whose typical fibre $H$ is \emph{right implementing}, in the sense that the map $s_H$ defined in \cref{SourceTargetMaps} admits a unit-preserving section near the unit element.
This is in particular the case for $H = L^2(A)$, the standard bimodule, as follows from the existence of the canonical implementation.
Denoting by $\mathcal{A}\text{-}\mathcal{B}\text{-}\BimodBdl^{\mathrm{imp}}(X)$ the corresponding subcategory of \emph{right implementing $\mathcal{A}$-$\mathcal{B}$-bimodule bundles}, composition is a functor
\begin{equation}
\label{CompositionOfBimoduleBundles}
\mathcal{A}\text{-}\mathcal{B}\text{-}\BimodBdl^{\mathrm{imp}}(X) \times \mathcal{B}\text{-}\mathcal{C}\text{-}\BimodBdl^{\mathrm{imp}}(X) \to \mathcal{A}\text{-}\mathcal{C}\text{-}\BimodBdl^{\mathrm{imp}}(X)\text{,}
\end{equation} 
which is  given fibrewise by the Connes fusion product.
In more detail, let $\mathcal{H}$ be a right implementing $\mathcal{A}$-$\mathcal{B}$-bimodule bundle and $\mathcal{K}$ be a right implementing $\mathcal{B}$-$\mathcal{C}$-bimodule bundle and let $u$ and $v$ be local bimodule bundle trivializations of $\mathcal{H}$, respectively $\mathcal{K}$ over an open set $O \subseteq X$.
Then, if $u$ and $v$ intertwine along the same the local trivialization $\varphi$ of $B$, the map
\begin{equation*}
  u \boxtimes v : \mathcal{H} \boxtimes \mathcal{K}|_O \to O \times (H \boxtimes K)
\end{equation*} 
given by fibrewise Connes fusion is a local trivialization of $\mathcal{H} \boxtimes \mathcal{K}$.
The point here is that the right implementing condition on $\mathcal{H}$ ensures that near any point, there exist local trivializations $u$ and $v$ that intertwine along the same the local trivialization $\varphi$ of $B$, thus providing the fibrewise Connes fusion product with a Hilbert bundle structure, see \cite[Proposition B.21]{ludewig2023spinor}.

The standard bimodule bundles $L^2(\mathcal{A})$, for $\mathcal{A}$ a von Neumann algebra bundle, are the identity 1-morphisms for the composition \cref{CompositionOfBimoduleBundles}.
The associativity of Connes fusion then shows that \cref{CompositionOfBimoduleBundles} is the composition of a bicategory. 

\begin{comment}
If $u$ and $v$ are general local bimodule trivializations of $\mathcal{H}$ and $\mathcal{K}$, then $u$ is an intertwiner along local trivializations $\alpha$ and $\beta$ of $\mathcal{A}$ and $\mathcal{B}$, while $v$ is an intertwiner along local trivializations $\beta^\prime$ and $\gamma$ of $\mathcal{B}$ and $\mathcal{C}$.
However, the point here is that $\beta^\prime$ may not be equal to $\beta$, and in general, we might not find local trivializations $u$ and $v$ with $\beta^\prime = \beta$, preventing us from constructing local trivializations of $\mathcal{H}\boxtimes\mathcal{K}$.
However, if $H$ is right implementing, then by definition, the difference $\beta^\prime \circ \beta^{-1}$ can be lifted, near the identity, to a family $(\varphi, w, \beta^\prime \circ \beta^{-1}) \in I(H)$.
The new pair of trivializations $u^\prime = w u$ and $v$ are intertwining along the same middle trivialization (namely $\beta^\prime$).
\end{comment}

We wrap up the discussion of this section as follows.

\begin{definition}
The \emph{bicategory $\vNAlgBdl^{\mathrm{bi}}(X)$  of von Neumann algebra bundles over $X$} consists of the following data.
\begin{itemize}
\item
Objects are von Neumann algebra bundles;
\item
the category of morphisms $\mathcal{B} \to \mathcal{A}$ is the category $\mathcal{A}\text{-}\mathcal{B}\text{-}\BimodBdl^{\mathrm{imp}}(X)$ of right implementing $\mathcal{A}$-$\mathcal{B}$-bimodule bundles over $X$;
\item 
 composition is fibre-wise Connes fusion,  \cref{CompositionOfBimoduleBundles};
\item
the identity morphism of $\mathcal{A}$ is the standard bimodule bundle $L^2(\mathcal{A})$ from \cref{StandardBimoduleBundle}; and
\item
associators and unitors are the bundle maps obtained by taking fibrewise the associators and unitors of the bicategory of von Neumann algebras and bimodules.
\end{itemize}
\end{definition}

If $f: X \to Y$ is a continuous map, we obtain an obvious pullback functor 
        \begin{equation*}
        f^* : \vNAlgBdl^{\mathrm{bi}}(Y) \to  \vNAlgBdl^{\mathrm{bi}}(X).
        \end{equation*}
        Hence, the bicategories $\vNAlgBdl^{\mathrm{bi}}(X)$ assemble to a presheaf of bicategories $\vNAlgBdl^{\mathrm{bi}}$ on the category $\top$ of topological spaces.
This presheaf is actually a pre-2-stack, since bimodule bundles form a stack (see \cite[Prop. 4.5.1]{Kristel2022}).

\subsection{Stackification}

\label{stackification}

The pre-2-stack $\vNAlgBdl^{\mathrm{bi}}$ is a \emph{preliminary} version of the 2-stack of 2-Hilbert bundles. 
It is preliminary because this pre-2-stack does not satisfy descent, and needs to be stackified.
 This phenomenon is well-understood  in the smooth setting, where the plus construction $(..)^{+}$ of Nikolaus-Schweigert \cite{nikolaus2}  can be used to turn a pre-2-stack into a 2-stack. 
 This has been extensively studied for finite-dimensional, smooth 2-vector bundles in \cite{Kristel2020} and can be carried over to the continuous von Neumann algebra setting in a straight-forward way.

We remark that the category $\top$ of topological spaces has several inequivalent Grothendieck topologies. The most common one is the Grothendieck topology generated by open covers, which is equivalent to the one generated by locally split maps, i.e., continuous maps $\pi:Y \to X$ such that each point $x\in X$ has an open neighborhood with a section. We use this Grothendieck topology in the plus construction. 

\begin{definition}%[Bicategory of  2-Hilbert bundles]
\label{2StackofvNA2VectorBundles}
        The \emph{2-stack of 2-Hilbert bundles} is defined by
\begin{equation*}
\vNAtwoVectBdl := (\vNAlgBdl^{\mathrm{bi}})^{+}\text{.}
\end{equation*} 
\end{definition}

In \cite[\S2.3]{Kristel2020} we spelled out all details of the plus construction in the finite-dimensional smooth setting, and this explicit description carries over to the current context essentially without changes. 
Here, we only need two pieces: the 2-Hilbert bundles themselves, and a certain notion of isomorphism, called refinement. 
We now recall these notions.

\begin{definition}\label{Definition2VectorBundle}
        A \emph{2-Hilbert bundle} over a space $X$ is a tuple $\mathscr{V}=(Y,\pi,\mathcal{A},\mathcal{M},\mu)$ consisting of a locally split map $\pi :Y \to X$, a von Neumann algebra bundle $\mathcal{A}$ over $Y$, an invertible $\pr_2^*\mathcal{A}$-$\pr_1^*\mathcal{A}$-bimodule bundle $\mathcal{M}$ over $Y^{[2]}$ and a unitary intertwiner
        \begin{equation*}
                \mu: \pr_{23}^* \mathcal{M} \boxtimes \pr_{12}^*\mathcal{M} \to \pr_{13}^*\mathcal{M}
        \end{equation*}
        of $\pr_3^*\mathcal{A}$-$\pr_1^*\mathcal{A}$-bimodule bundles over $Y^{[3]}$ such that the diagram
        \begin{equation*}
\alxydim{@C=7em}{\pr_{34}^*\mathcal{M} \boxtimes \pr_{23}^*\mathcal{M} \boxtimes \pr_{12}^*\mathcal{M} \ar[r]^-{\id \boxtimes \pr_{123}^*\mu} \ar[d]_{\pr_{234}^*\mu \boxtimes \id}  & \pr_{34}^*\mathcal{M} \boxtimes \pr_{13}^*\mathcal{M} \ar[d]^{\pr_{134}^*\mu} \\ \pr_{24}^*\mathcal{M} \boxtimes \pr_{12}^*\mathcal{M} \ar[r]_{\pr_{124}^*\mu} & \pr_{14}^*\mathcal{M} }
\end{equation*}
over $Y^{[4]}$ commutes.
\end{definition}

This structure can be depicted as follows:
\begin{equation*}
        \mathcal{V} = \left\{
                \alxydim{}{
                        \mathcal{A} \ar[d] & \mathcal{M} \ar[d] & \mu \ar@{.}[d]   \\
                        Y \ar[d]& Y^{[2]} \arr[l]  & Y^{[3]}  \arrr[l] \\
                        X  
                }
        \right\}
\end{equation*}

In \cref{sec:ass2vb} we describe two examples of 2-Hilbert bundles, and show that they are isomorphic. 
The isomorphisms we introduce are so-called \emph{refinements}, parallel to  \cite[Def. 3.5.1]{Kristel2020}. 

\begin{definition}
\label{def:refinement}
Let $\mathscr{V}=(Y,\pi,\mathcal{A},\mathcal{M},\mu)$ and $\mathscr{V}'=(Y',\pi',\mathcal{A}',\mathcal{M}',\mu')$ be 2-Hilbert bundles over $X$. 
A \emph{refinement} $\mathscr{V} \to \mathscr{V}'$ is a triple  $\mathscr{R}=(\rho,\varphi,u)$ consisting of a continuous map $\rho:Y \to Y'$ such that $\pi'\circ \rho=\pi$, of an isomorphism $\varphi:\mathcal{A}\to \rho^{*}\mathcal{A}'$ of von Neumann  algebra bundles over $Y$, and of 
a bimodule bundle isomorphism $u:\mathcal{M} \to (\rho^{[2]})^* \mathcal{M}'$  over $Y^{[2]}$ 
along the algebra homomorphisms $\pr_1^{*}\varphi$ and $\pr_2^{*}\varphi$,
such that the diagram
\begin{equation}
\label{eq:refinementdiag}
\begin{aligned}
\xymatrix@C=6em{\pr_{23}^{*}\mathcal{M} \boxtimes \pr_{12}^{*}\mathcal{M}\ar[d]_{\pr_{23}^{*}u \boxtimes \pr_{12}^{*}u} \ar[r]^-{\mu} & \pr_{13}^{*}\mathcal{M} \ar[d]^{\pr_{13}^{*}u}\\   (\rho^{[2]})^*(\pr_{23}^{*}\mathcal{M}' \boxtimes \pr_{12}^{*}\mathcal{M}')  \ar[r]_-{(\rho^{[3]})^{*}\mu'} & \pr_{13}^{*}(\rho^{[2]})^*\mathcal{M}'}
\end{aligned}
\end{equation}
is commutative.
\end{definition}

\subsection{Associated 2-Hilbert bundles}
\label{SectionAssociatedBimoduleBundles}

Throughout this section, we fix a von Neumann algebra $A$.
We consider the sub-bicategory of $\vNAlgBdl^{\mathrm{bi}}(X)$ over a single object, the trivial von Neumann algebra bundle $\underline{A}=X \times A$. 
This is the delooping 
\begin{equation}
\label{SubcategoryOverA}
\mathscr{B}(\underline{A}\text{-}\underline{A}\text{-}\BimodBdl^{\mathrm{imp}}(X))\subset \vNAlgBdl^{\mathrm{bi}}(X)
\end{equation}
of the monoidal category $\underline{A}\text{-}\underline{A}\text{-}\BimodBdl^{\mathrm{imp}}(X)$ of endomorphisms of $\underline{A}$. 
Letting $X$ vary, we obtain a presheaf $\underline{A}\text{-}\underline{A}\text{-}\BimodBdl^{\mathrm{imp}}$ of monoidal categories. 
One can show that this presheaf is in fact  a monoidal stack.

Next we consider the automorphism 2-group  $\mathcal{U}(A)$ as defined in \cref{Automorphism2Group}, and set up in the following a morphism of monoidal stacks
\begin{equation}
\label{BimoduleFunctor}
\Mod: \Bdl {\mathcal{U}(A)} \to  \underline{A}\text{-}\underline{A}\text{-}\BimodBdl^{\mathrm{imp}}
\end{equation}
from principal $\mathcal{U}(A)$-bundles (see \cref{sec:principalbundles}) to $\underline{A}$-$\underline{A}$-bimodule bundles. It will be the central ingredient of our associated 2-Hilbert bundle construction.

First of all, we recall that associated to the topological strict 2-group $\mathcal{G}=\mathcal{U}(A)$ are the topological groups $\mathcal{G}_0=\Aut(A)$, $\mathcal{G}_1=N(A)$ 
and $\mathcal{G}_s=\U(A)$.
Let $P$ be a principal $\mathcal{U}(A)$-bundle over a topological space $X$,   \ie $P$ is a principal $\U(A)$-bundle over $X$ together with anchor $\phi: P \to \Aut(A)$ satisfying \cref{AnchorPropertyCrossedModuleBundle}, which we write as $\phi(pu) = t(u^{*})\phi(p)$.
We define
\begin{equation}
\label{AssociatedBimodule}
        \Mod (P) := (P \times L^2(A))/\U(A)\text{,}
\end{equation}
where the $\U(A)$-action is the diagonal right action; i.e., we identify $(p,\xi) \sim (p\cdot u,\xi \ract u)$, where $p\in P$, $\xi\in L^2(A)$, and $u\in \U(A)$.
As $\U(A)$ acts strongly continuously on $L^2(A)$, the usual associated bundle construction provides $\Mod(P)$ with the structure of a  Hilbert  bundle. 
\begin{comment}
        One cannot use $(p,\xi) \sim (pu, \xi \ract u^{*})$, because this relation is not transitive.
        If we'd want to use $(p,\xi) \sim (pu, u^{*} \lact \xi)$, then we'd have to use the anchor map to adjust the left-action instead of the right-action.
               Check that this would indeed work:
               \begin{align*}
                       a \lact [pu, u^{*}\xi] &= [pu, \phi(pu)(a) u^{*}\xi] \\
                       &= [pu, u^{*}\phi(p)(a)\xi] \\
                       &= [p, \phi(p)(a)\xi] \\
                       &= a \lact [p, \xi].
               \end{align*}
\end{comment}
%
We equip the fibres of $\Mod (P)$ with the $\underline{A}$-$\underline{A}$-bimodule bundle structure defined by
\begin{equation}
\label{bimodule-actions-on-associated-bundle}
        a \lact [p,\xi] \ract b := [p,a \lact \xi \ract \phi(p)(b)], \qquad a, b \in A, ~~\xi \in L^2(A), ~~p \in P.
\end{equation}
One easily checks well-definedness on equivalence classes.
%
\begin{comment}
Indeed,
\begin{align*}
        a \lact [pu,\xi \ract u]\ract b &= [pu,a\xi\ract u \,\phi(pu)(b)]
                                      \\&= [pu,a\xi\ract u\,t(u^{*})(\phi(p)(b))]
                                      \\&= [pu, a\xi \ract \phi(p)(b) u]
                                      \\&= [p, a\xi \ract \phi(p)(b) ]
                                      \\&= a \lact [p,\xi] \ract b\text{.}
\end{align*}
\end{comment}
%
Next we show that $\Mod(P)$ is a right implementing $\underline{A}$-$\underline{A}$-bimodule bundle in the sense of \cref{von-Neumann-algebra-bundles}.
Any section $p$ of $P$ defined over an open set $O \subset X$ gives a local trivialization
        \begin{equation}
                \label{eq:intN}
                \tau_p: \Mod (P)|_O \to \sheaf{L^2(A)}_{\,\phi(p)} \qquad [p(x),\xi] \mapsto (x, \xi) \text{,}
        \end{equation}
        where the right hand side denotes the $\underline{A}$-$\underline{A}$-bimodule bundle obtained by twisting the right action with $\phi \circ p : O \to \Aut(A)$, see \cref{ExTwistedRightAction}. 
In other words, $\tau_p$ is a family of unitary isomorphisms that is intertwining along the identity and $\phi(p)$.
Indeed, 
        \begin{equation} \label{IntertwiningUsigma2}
                \tau_p (a \lact [p(x), \xi] \ract b) = \tau_p\bigl( \bigl[p(x), a \lact \xi \ract \phi(p(x))(b)\bigr]\bigr) = \bigl(x, a\lact \xi \ract \phi(p(x))(b)\bigr).
        \end{equation}
        In other words, $\tau_p$ is an intertwiner along the local trivializations $\underline{A}|_O \to O \times A$ given by $(x,a)\mapsto (x,a)$ and $(x,a) \mapsto (x,\phi(p(x))(a))$, respectively. 
        This shows that $\Mod(P)$ is a bimodule bundle. 
        It is right implementing by \cite[Example~B.15]{ludewig2023spinor}.
       For later use, it will be good to determine the transition function between two local trivializations.  
If $p^\prime: V \to P$ is another local section defined over an open set $O^\prime \subseteq X$, then, over $O\cap O^\prime$, we have $p^\prime = p \cdot u$ for a continuous function $u : O \cap O^\prime \to \U(A)$.
                Therefore,
                \begin{equation*}
                        (\tau_{p^\prime} \circ \tau_p^{*})(x, \xi) = \tau_{p^\prime}([p(x), \xi]) = \tau_{p^\prime}([p^\prime(x) \cdot u(x)^*, \xi]) = \tau_{p^\prime}([p^\prime(x), \xi \ract u(x)]) = (x, \xi \ract u(x)).
                \end{equation*}
                for all $x \in O \cap O^\prime$. 
                We see that the transition functions $\tau_{p^\prime} \circ \tau_p^*$ are given by right multiplication by $u$, which is intertwining along the identity and $t(u)$.
                In other words, we have the commutative diagram of bimodule bundle isomorphisms over $O \cap O':$
                 \begin{equation}
                 \label{DiagramChangeOfTrivialization}
        \alxydim{}{
                & \Mod (P) \ar[dl]_{\tau_{p}} \ar[dr]^{\tau_{p'}} & \\
                \sheaf{L^2(A)}_{\phi(p)} \ar[rr]_{\ract \, u}  & & \sheaf{L^2(A)}_{\phi(p^\prime)}.
                } 
        \end{equation}   
        
It is straightforward to see that any morphism $f:P \to Q$ between between principal $\mathcal{U}(A)$-bundles defines an intertwiner 
\begin{equation}
\label{FormulaModf}
\Mod(f) : \Mod(P) \to \Mod(Q), \qquad [p, \xi] \mapsto [f(p), \xi]\text{.}
\end{equation}
For later use, we observe that since $f$ intertwines the anchors $\phi:P \to \Aut(A)$ and $\psi:Q \to \Aut(A)$, i.e., $\phi = \psi \circ f$, it has the property that the diagram
        \begin{equation} \label{FunctorModProperty}
        \alxydim{}{
               \Mod(P) \ar[r]^-{\Mod(f)} \ar[d]_{\tau_{p}} & \Mod(Q) \ar[d]^{\tau_{f\circ p}} \\
                \sheaf{L^2(A)}_{\phi(p)} \ar@{=}[r]& \sheaf{L^2(A)}_{\psi(f \circ p)}
               }
        \end{equation}
        commutes for each local trivialization $p$ of $P$.

So far we have defined $\Mod$ as a functor. 
It is clear $\Mod$ it is compatible with pullbacks, and so it is a stack morphism as in \cref{BimoduleFunctor}.
It remains to verify that it is monoidal, relating the tensor product of principal $\mathcal{U}(A)$-bundles (see \cref{sec:principalbundles}) with the Connes fusion product of bimodule bundles defined in \cref{von-Neumann-algebra-bundles}.
We use again the local trivializations $\tau_{p}$ constructed in \cref{eq:intN} from a local section $p$ of $P$, and recall  from \cref{IntertwiningUsigma2} that they are bimodule bundle isomorphisms $\Mod (P)|_O \to \sheaf{L^2(A)}_{\phi( p)}$. 
 
\begin{proposition}
        \label{prop:compassbimod}
        Let $P$ and $Q$ be principal $\mathcal{U}(A)$-bundles with anchors  $\phi:P \to \Aut(A)$ and $\psi:Q \to \Aut(A)$, respectively. 
        Then, there exists a unique intertwiner 
        \begin{equation} \label{MonoidalityMorphism}
            \Mod (P) \boxtimes \Mod(Q) \cong \Mod(P \otimes Q)
        \end{equation}
        such that for all local sections $p$ of $P$ and $q$ of $Q$ over a common open set $O \subset X$, the diagram
        \begin{equation} \label{DefiningDiagramMonoidalityMorphism}
        \alxydim{@C=5em}{
               \Mod(P) \boxtimes \Mod(Q)  \ar[d]_{\tau_p \boxtimes \tau_q} \ar[r] & \Mod(P \otimes Q) \ar[d]^{\tau_{p \otimes q}} \\
               \sheaf{L^2(A)}_{\phi ({p})} \boxtimes \sheaf{L^2(A)}_{\psi (q)}  \ar[r]_-{\chi_{\phi (p), \psi(q)}} & 
\sheaf{L^2(A)}_{\phi (p) \circ \psi (q)}  
                 }
        \end{equation}
        is commutative.
        Here, $\chi$ is the natural transformation \cref{IsoOfTwistedStandardForms}.
\end{proposition}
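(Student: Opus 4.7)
The plan is the standard construct-locally-and-glue argument, where uniqueness is forced by the diagram and existence reduces to the naturality of $\chi$.

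Since $P$ and $Q$ are locally trivial, pick an open cover $\{O_\alpha\}$ of $X$ together with local sections $p_\alpha : O_\alpha \to P$ and $q_\alpha : O_\alpha \to Q$. Over each $O_\alpha$, commutativity of \cref{DefiningDiagramMonoidalityMorphism} forces the intertwiner to equal
\[
\Phi_\alpha \defeq \tau_{p_\alpha \otimes q_\alpha}^{-1} \circ \chi_{\phi \circ p_\alpha,\, \psi \circ q_\alpha} \circ (\tau_{p_\alpha} \boxtimes \tau_{q_\alpha}),
\]
which is manifestly a unitary bimodule bundle intertwiner, since each factor is. This establishes uniqueness at once and provides candidate local maps $\Phi_\alpha$; it remains to verify that they glue.

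On an overlap $O_\alpha \cap O_\beta$, write $p_\beta = p_\alpha \cdot u$ and $q_\beta = q_\alpha \cdot v$ for continuous $u, v : O_\alpha \cap O_\beta \to \U(A)$. By \cref{DiagramChangeOfTrivialization}, the transitions from $\tau_{p_\alpha}$ to $\tau_{p_\beta}$, and from $\tau_{q_\alpha}$ to $\tau_{q_\beta}$, are right multiplications by $u$ and $v$, respectively. Using the standard tensor product of principal 2-group bundles recalled in \cref{sec:principalbundles} together with the crossed module relations \cref{CrossedModuleActions} for $t : \U(A) \to \Aut(A)$, one determines $w : O_\alpha \cap O_\beta \to \U(A)$ built explicitly from $u$, $v$ and the anchor $\phi \circ p_\alpha$ such that $p_\beta \otimes q_\beta = (p_\alpha \otimes q_\alpha) \cdot w$; the corresponding transition of $\tau_{p \otimes q}$ is then right multiplication by $w$. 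Consequently, $\Phi_\alpha = \Phi_\beta$ becomes the commutativity of a square whose horizontal arrows are two instances of $\chi$ and whose vertical arrows are right multiplications by $u$, $v$ and $w$.

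That square is an instance of the naturality diagram \cref{Naturality} applied with $U_1 = \ract u$ and $U_2 = \ract v$. I expect the principal obstacle to lie precisely here: matching the right-hand vertical arrow prescribed by \cref{Naturality} --- an explicit composition involving the two unitaries and the canonical implementations $L^2(\phi \circ p_\alpha)$ --- with the simple right multiplication by $w$. This is a bookkeeping computation that uses \cref{OmegaAndI}, the multiplicativity of $L^2$ on the image of $t$, and the precise form of the crossed module action that appears in the tensor product of principal 2-group bundles. Once it is completed, the cocycle compatibility on triple overlaps is automatic by applying the uniqueness formula over triple intersections, so the $\Phi_\alpha$ patch together to a global intertwiner $\Phi$; commutativity of \cref{DefiningDiagramMonoidalityMorphism} for an arbitrary pair of local sections (not only the $p_\alpha$, $q_\alpha$ used in the construction) then follows from a further application of the same local uniqueness argument.
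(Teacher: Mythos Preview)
Your approach is essentially the paper's own: define the map locally by forcing \cref{DefiningDiagramMonoidalityMorphism}, then show independence of the chosen sections by reducing to \cref{Naturality}. The paper carries out precisely the ``bookkeeping computation'' you flag as the principal obstacle: using the equivalence relation \cref{eq:MonoidalStructureQuotient} and the anchor property \cref{AnchorPropertyCrossedModuleBundle}, one finds explicitly
\[
p_\beta \otimes q_\beta = (p_\alpha \otimes q_\alpha)\cdot \phi(p_\alpha)(v)\,u,
\]
and with $U_1 = {\ract\,u}$, $U_2 = {\ract\,v}$, $\theta_1 = \phi(p_\alpha)$, $\varphi = \id$ in \cref{Naturality}, the right-hand vertical arrow there becomes exactly $\ract\,\phi(p_\alpha)(v)u$, closing the square.

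One correction: your invocation of \cref{OmegaAndI} is misplaced. That identity is specific to the stringor representation and plays no role here; the only ingredients needed are the crossed-module action of $\Aut(A)$ on $\U(A)$ by evaluation, the tensor-product relation \cref{eq:MonoidalStructureQuotient}, and the intertwining property of $L^2(\theta_1)$ that enters \cref{Naturality}.
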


\begin{proof}
First we observe that by the definition \cref{DefinitionAnchorTensorProduct} of the anchor map of the tensor product $P \otimes Q$, we have $\phi(p) \circ \psi(q) = (\phi \otimes \psi)(p \otimes q)$; hence, the target of the local trivialization $\tau_{p \otimes q}$ is indeed  $\sheaf{L^2(A)}_{\phi (p) \circ \psi (q)}$.

        It is clear that  diagram \cref{DefiningDiagramMonoidalityMorphism} determines the intertwiner completely for a given choice of local sections. 
        It remains to prove that different choices of sections yield the same intertwiner. 
        Let $p$ and $p' = p \cdot u$ be two different local sections of $P$ over some open set $O$, for which we have the commutative diagram \cref{DiagramChangeOfTrivialization}.
        Similarly, let $q$ and $q' = q \cdot v$ be local sections of $Q$ over $O$, for which we have an analogous commutative triangle.
        These two triangles -- together with functoriality of the fusion product -- show that the left square of the diagram
           \begin{equation} \label{BigDiagramInFunctorialityOfModProof}
                \alxydim{@C=.7em}{
               \Mod(P) \boxtimes \Mod(Q)  \ar[dr]_{\tau_p \boxtimes \tau_q} \ar@{=}[ddd] \ar[rrrrrr] &&&& & & \Mod(P \otimes Q) \ar[dl]^{\tau_{p \otimes q}} \ar@{=}[ddd] \\
               &\sheaf{L^2(A)}_{\phi(p)} \boxtimes \sheaf{L^2(A)}_{\psi(q)} \ar[d]_{(\ract \, u) \boxtimes (\ract \, v)} \ar[rrrr]^-{\chi_{\phi(p), \psi(q)}}  &&&& \sheaf{L^2(A)}_{\phi(p)\psi(q)}  \ar[d]^{\ract \, \phi(p)(v)u} & \\ 
              &   \sheaf{L^2(A)}_{\phi(p')} \boxtimes \sheaf{L^2(A)}_{\psi(q')} \ar[rrrr]_-{\chi_{\phi(p^\prime), \psi(q^\prime)}} &&&& \sheaf{L^2(A)}_{\phi(p')\psi(q')} & \\
               \Mod(P) \boxtimes \Mod(Q)  \ar[ur]_{\tau_{p^\prime} \boxtimes \tau_{q^\prime}} \ar[rrrrrr] &&& && & \Mod(P \otimes Q) \ar[ul]_{\tau_{p^\prime \otimes q^\prime}}\text{.}
                }
        \end{equation}
         is commutative.
The top and bottom square are the defining squares \cref{DefiningDiagramMonoidalityMorphism} for the morphism \cref{MonoidalityMorphism}. 
        Commutativity of the central square is a special case of \cref{Naturality}.
        %
        \begin{comment}
        Set $\theta_1 = \phi(p)$, $\theta_2 = \phi(q)$, $\theta_1^\prime = \phi(p^\prime)$, $\theta_2^\prime = \phi(q^\prime)$, $U_1 = \ract u$, $U_2 = \ract v$, $\varphi = \id$.
        \begin{equation*}
        \begin{aligned}
          U_1L^2(\theta_1)L^2(\varphi)U_2 L^2(\theta_1)^* \xi
          &= U_1L^2(\theta_1)(L^2(\theta_1)^*\xi\ract v)\\
          &= U_1L^2(\theta_1) ( L^2(\theta_1)^*\xi\ract v)\\
          &= U_1 (\lact \xi\ract \theta_1( v))\\
          &= \xi\ract \theta_1( v )u\\
          &= \xi\ract \phi(p)( v )u
        \end{aligned}
        \end{equation*}
        \end{comment}
           Hence, in order to verify that the top and bottom horizontal maps agree, it remains to show that the right square is commutative.

        To this end, we need to compare the elements $p\otimes q$ and $p'\otimes q'$ of $P \otimes Q$.
        We recall that $P \otimes Q$ is a quotient of $P \times_M Q$ by the equivalence relation \cref{eq:MonoidalStructureQuotient}, and that the $\U(A)$-action on $P \otimes Q$ is induced by the action of $\U(A)$ on the first factor of $P \times_M Q$.
        Using these rules, we calculate
        \begin{align*}
                p' \otimes q' &= p\cdot u \otimes q\cdot v
                                    \\&=p\cdot u \otimes q\cdot \phi(p\cdot u)^{-1}\bigl( \phi(p\cdot u)(v)\bigr)
                                    \\&=p \cdot u \cdot \phi(p\cdot u)(v) \otimes q       & &\cref{eq:MonoidalStructureQuotient}
                                    \\&=(p \otimes q)\cdot  u  \phi(p\cdot u)(v)          & &\text{ (Definition of right action)}
                                    \\&=(p \otimes q)\cdot u  (t(u^{*})\circ \phi(p))(v)  & &\cref{AnchorPropertyCrossedModuleBundle}
                                    \\&=(p \otimes q)\cdot \phi(p)(v)  u\text{.}
        \end{align*}         
        With a view on \cref{DiagramChangeOfTrivialization}, this shows commutativity of the right square in \cref{BigDiagramInFunctorialityOfModProof}.
\end{proof}

Now we are in position to describe our construction of associated 2-Hilbert bundles.
Let $\mathcal{G}$ be a topological strict 2-group and let $\Rep:\mathcal{G} \to \mathcal{U}(A)$ be a unitary representation on a von Neumann algebra $A$.
As recalled in \cref{lem:bundleextension}, $\Rep$ induces a morphism of monoidal stacks 
\begin{equation*}
\Rep_* : \Bdl{\mathcal{G}} \to \Bdl{\mathcal{U}(A)},
\end{equation*}
which can be composed with the morphism $\Mod$ from \cref{BimoduleFunctor}, resulting in a morphism
\begin{equation}
\label{ModR}
\Mod_\Rep := \mathcal{R}_{*}  \circ \Mod : \Bdl{\mathcal{G}} \to \underline{A}\text{-}\underline{A}\text{-}\BimodBdl^{\mathrm{imp}}
\end{equation}
of monoidal stacks.

For convenience, we will spell out the composition \cref{ModR}, and simplify the result slightly. If $P$ is a principal $\mathcal{G}$-bundle over $M$, then we have in the first place
\begin{equation}
\label{ModR-spelled-out}
\Mod_{\mathcal{R}}(P) =\Big(\big((P \times\U(A))/\mathcal{G}_s\big) \times L^2(A)\Big)/\U(A)\text{,} 
\end{equation}
where $g\in \mathcal{G}_s$ acts by $(p,u)\cdot g =(pg,\mathcal{R}_1(g)^{-1}u)$, and $v\in \U(A)$ acts by $([p,u],\xi)\cdot v = ([p,uv],\xi \ract v)$. The bimodule structure is given by
\begin{equation*}
a\lact [[p,u],\xi]\ract b =[[p,u],a\lact  \xi \ract t(u)^{-1}\mathcal{R}_0(\phi(p))(b)]\text{,}
\end{equation*}
where $\phi$ is the anchor of $P$.
However,
 the single quotient 
\begin{equation}
\label{ModR-simplyification}
(P \times L^2(A))/\mathcal{G}_s\text{,}
\end{equation}
where $\mathcal{G}_s$ acts on $P \times L^2(A)$ by $(p,\xi)\cdot g := (pg,\xi \ract \mathcal{R}_1(g))$ yields a canonically isomorphic Hilbert bundle. Indeed, the  map $[p,\xi] \mapsto [[p,1],\xi]$ is a continuous, fibre-preserving, fibre-wise unitary isomorphism. 
The bimodule action then simply becomes
\begin{equation}
\label{BimoduleStructureModLambda}
a\lact [p,\xi]\ract b =[p,a \lact \xi \ract \mathcal{R}_0(\phi(p))(b)]\text{.}
\end{equation}
Summarizing, the bimodule bundle $\Mod_{\mathcal{R}}(P)$ is given by the quotient \cref{ModR-simplyification} and is equipped with the bimodule actions \cref{BimoduleStructureModLambda}.
For a morphism $f:P \to Q$ of principal $\mathcal{G}$-bundles, the intertwiner $\Mod_{\Rep}(f)$ is defined by the same formula \cref{FormulaModf} as before.

\begin{definition}%[Associated 2-vector bundle]
\label{def:associated2vectorbundle}
Let $\mathcal{G}$ be a topological strict 2-group, and let $\Rep: \mathcal{G} \to \mathcal{U}(A)$ be a unitary representation of $\mathcal{G}$ on a von Neumann algebra $A$. 
The \emph{associated 2-Hilbert bundle construction} is the morphism of 2-stacks
\begin{equation*}
\alxydim{@C=4em}{{\mathcal{G}}\text{-}\Grb = \mathscr{B}(\Bdl{\mathcal{G}})^{+} \ar[r]^-{\mathscr{B}\Mod_{\Rep}^+} & \mathscr{B}(\underline{A}\text{-}\underline{A}\text{-}\BimodBdl^{\mathrm{imp}})^+ \subseteq (\vNAlgBdl^{\mathrm{bi}})^{+} = \vNAtwoVectBdl \text{.}}
\end{equation*}
If $\mathcal{Q}$ is a ${\mathcal{G}}$-bundle gerbe over a space $X$, its image is called the \emph{associated 2-Hilbert bundle} (for the representation $\Rep$) and is denoted by $\mathcal{Q} \times_{\mathcal{G}} A$. 
\end{definition}

Here, we have used the definition of ${\mathcal{G}}$-bundle gerbes via the plus construction, see \cref{def:bundlegerbe,DiffeologicalPlusConstruction}. 
We use further that $\Mod_{\mathcal{R}}$ induces a functor $\mathscr{B}\Mod_{\mathcal{R}}$ between bicategories with a single object, due to the fact that it is monoidal; and finally, we use that the plus construction is functorial.

We shall spell out  the data of the associated 2-Hilbert bundle $\mathcal{Q} \times_{{\mathcal{G}}} A$ explicitly. 
For this purpose, we suppose that a ${\mathcal{G}}$-bundle gerbe $\mathcal{Q}$ over $X$ consists of a  locally split map $\pi:Y \to X$, of a principal ${\mathcal{G}}$-bundle $P$ over $Y^{[2]}$, and of a bundle gerbe product $\mu$ over $Y^{[3]}$, just as in \cref{def:bundlegerbe}. Then, the associated  2-Hilbert bundle $\mathcal{Q} \times_{{\mathcal{G}}} A$ is the following:  
\begin{itemize}
\item 
Its locally split map is $\pi:Y \to X$.
\item
Its von Neumann algebra bundle over $Y$ is the trivial bundle $\underline{A} = Y \times A$. 
\item
Its bimodule bundle over $Y^{[2]}$ is $\Mod_\Rep(P)$.
\item
Its product over $Y^{[3]}$ is given by $\Mod_{\Rep}(\mu)$. 
More precisely, it is the composite
\begin{equation}
\label{DefinitionModTilde}
        \alxydim{@C=5em@R=1.5em}{\pr_{23}^{*}\Mod_\Rep(P) \boxtimes \pr_{12}^{*}\Mod_\Rep(P) \ar[d]_{\cong}   &
           \\
          \Mod_\Rep(\pr_{23}^{*}P) \boxtimes \Mod_\Rep(\pr_{12}^{*}P) \ar[d]_{\cong} & \\
           \Mod_\Rep(\pr_{23}^{*}P \otimes \pr_{12}^{*}P)  \ar[r]^-{\Mod_\Rep(\mu)}   & \Mod_\Rep(\pr_{13}^{*}P) \ar[d]^{\cong} \\& \pr_{13}^{*}\Mod_\Rep(P)\text{,}}
\end{equation} 
where the vertical arrows are the canonical structure isomorphisms of the monoidal stack morphism $\Mod_{\mathcal{R}}$.
% More precisely, it is the isomorphism $\Mod_\Rep^\prime(\mu)$ making the diagram 
% \begin{equation}
% \label{DefinitionModTilde}
%         \alxydim{@C=5em}{\pr_{23}^{*}\Mod_\Rep(P, \phi) \boxtimes \pr_{12}^{*}\Mod_\Rep(P, \phi) \ar[d]  \ar@{-->}[r]^-{\Mod_\Rep^\prime(\mu)} &
%           \pr_{13}^{*}\Mod_\Rep(P, \phi) \ar[dd] \\
%           \Mod_\Rep(\pr_{23}^{*}(P, \phi)) \boxtimes \Mod_\Rep(\pr_{12}^{*}(P, \phi)) \ar[d] & \\
%            \Mod_\Rep(\pr_{23}^{*}P \otimes \pr_{12}^{*}P, \phi)  \ar[r]^-{\Mod_\Rep(\mu)}   & \Mod_\Rep(\pr_{13}^{*}P, \phi)\text{,}}
% \end{equation} 
% commute, where the vertical maps are the structure maps of the monoidal stack  $\Mod_\Rep$.
\end{itemize}

\section{The stringor bundle}

\label{sec:ass2vb}

Let $M$ be a string manifold. There are at least four equivalent options to model string structures on $M$; 
in the present paper, two of these options are relevant: (a) fusive loop-spin structures and (b) $\String(d)$-bundle gerbes.  
In \cref{SubsectionStringStructuresFusiveLoopSpin} we recall these  and recall the relation between them, on the basis of  \cref{sec:strstr}.  
In  \cref{sec:stringor} we  define the Stolz-Teichner stringor bundle as a 2-Hilbert bundle, based on a fusive loop-spin structure on $LM$. 
In \cref{sec:identification}, we prove our main result: the stringor bundle is isomorphic to the 2-Hilbert bundle obtained by associating the stringor representation to the $\String(d)$-bundle gerbe obtained from the fusive loop-spin structure.

\subsection{Fusive loop-spin structures and string structures  }
\label{SubsectionStringStructuresFusiveLoopSpin}

We recall that a spin structure on an oriented Riemannian manifold $M$ is a principal $\Spin(d)$-bundle $\Spin(M)$ over $M$ that lifts the orthogonal frame bundle of $M$; i.e., it is equipped with a smooth map $q:\Spin(M) \to  \mathrm{SO}(M)$ that covers the identity on $M$ and is equivariant along the projection $\Spin(d) \to \SO(d)$.
Taking free loops in $\Spin(M)$, we obtain a principal $L\Spin(d)$-bundle $L\Spin(M) $ over $LM$. 
Let $\widetilde{L\Spin}(d)$ be a basic central extension of $L\Spin(d)$ (see \cref{SectionStringorRep}). 
A \emph{loop-spin structure} is a principal $\widetilde{L\Spin}(d)$-bundle $\widetilde{L\Spin}(M)$ over $LM$ together with a smooth map
\begin{equation*}
        p:\widetilde{L\Spin}(M) \to L\Spin(M)
\end{equation*} 
covering the identity on $LM$, and which is equivariant along the projection $\widetilde{L\Spin}(d) \to L\Spin(d)$ of the basic central extension \cite{killingback1}.
In other words, a loop-spin structure is a lift of the structure group of $\widetilde{L\Spin}(M)$ from $L\Spin(d)$ to its basic central extension.
We remark that the map $p$ is automatically (the projection of) a principal $\U(1)$-bundle. 

For the following definition, we use the fact that the basic central extension has a canonical  \emph{fusion product} \cite[Definition 3.4]{Waldorfa}, i.e., an isomorphism
        \begin{equation*}
                \mu : \mathrm{pr}_{23}^*\cup^{*} \widetilde{L\Spin}(d) \otimes \mathrm{pr}_{12}^*\cup^{*} \widetilde{L\Spin}(d)  \to \mathrm{pr}_{13}^*\cup^{*} \widetilde{L\Spin}(d)
        \end{equation*}
of principal $\U(1)$-bundles over $P\Spin(d)^{[3]}$, which is associative over $P\Spin(d)^{[4]}$ and additionally a group homomorphism. Fusion products on loop group extensions determine, and are determined by fusion factorizations \cite[\S 5]{Kristel2019}; in the present situation, we may use the fusion factorization $i$ from \cref{FusionFactorization} and set
\begin{equation}
\label{fusion-product-on-basic-ce}
 \mu (X_{23}\otimes X_{12}) %:= X_{12}i(\gamma_2)^*X_{23}}
 :=X_{23}i(\gamma_2)^{*}X_{12}\text{,}
\end{equation}
where $X_{12}$ projects to $\gamma_1\cup\gamma_2$ and $X_{23}$ projects to $\gamma_2\cup\gamma_3$.
\begin{comment}
Let us again check that this makes sense: $X_{23}i(\gamma_2)^*X_{12}$ projects to
\begin{equation*}
(\gamma_2 \cup\gamma_3)\cdot (\gamma_2^{-1}\cup\gamma_2^{-1})\cdot (\gamma_1\cup\gamma_2) = \gamma_1 \cup \gamma_3\text{.}
\end{equation*} 
We remark that
\begin{align*}
X_{12}i(\gamma_2)^*X_{23}=i(\gamma_2)\underbrace{i(\gamma_2)^{*}X_{12}}_{\text{over }\gamma_1 \cup e}\underbrace{{i(\gamma_2)^*X_{23}}}_{\text{over }e \cup \gamma_3} = i(\gamma_2)i(\gamma_2)^{*}X_{23}i(\gamma_2)^{*}X_{12}=X_{23}i(\gamma_2)^{*}X_{12}\text{.}
\end{align*}
We remark that \cref{composition-in-string-2-group} says that in the 2-group $\String(d)$ the composition is
\begin{equation*}
(\gamma_1,\gamma_2,X_{12}) \circ (\gamma_2,\gamma_3,X_{23}) = (\gamma_1,\gamma_3,X_{12}i(\gamma_2)^{*}X_{23})
\end{equation*}
so that with above definition and equality we get
\begin{equation*}
(\gamma_1,\gamma_2,X_{12}) \circ (\gamma_2,\gamma_3,X_{23}) =(\gamma_1,\gamma_3,\mu (X_{23}\otimes X_{12}))\text{.}
\end{equation*}
This is used in \cref{2-principal-action}.
\end{comment}

\begin{definition}%[Fusive loop-spin structure]  
\label{DefinitionFusiveSpinStructure}
A \emph{fusive loop-spin structure} is a loop-spin structure whose principal $\U(1)$-bundle $p$ is equipped with a fusion product, i.e. a bundle isomorphism
 \begin{equation*}
  \lambda : \mathrm{pr}_{23}^*\cup^{*} \widetilde{L\Spin}(M) \otimes \mathrm{pr}_{12}^*\cup^{*} \widetilde{L\Spin}(M)  \to \mathrm{pr}_{13}^*\cup^{*} \widetilde{L\Spin}(M)
 \end{equation*}     
 over $P\Spin(M)^{[3]}$  
 that is associative over $P\Spin^{[4]}$ and is compatible with the fusion product $\mu$ on the basic central extension under the principal action, i.e., 
\begin{equation}\label{eq:FusionIsEquivariant}
                \lambda\bigl( (\Phi_{23} \cdot X_{23}) \otimes (\Phi_{12} \cdot X_{12})\bigr) = \lambda(\Phi_{23} \otimes \Phi_{12}) \cdot \mu(X_{23}\otimes X_{12})
        \end{equation}
        holds for all $\Phi_{12}, \Phi_{23} \in \widetilde{L\Spin}(M)$ and $X_{12}, X_{23} \in \widetilde{L\Spin}(d)$ such that both sides of \cref{eq:FusionIsEquivariant} are defined. 
\end{definition}

Fusive loop-spin structures can be viewed as a loop space version of string structures on manifolds \cite{Waldorfa,Waldorfb}. We now relate them to the following, more instructive notion of a string structure. We assume again that $M$ is a spin manifold with spin structure $\Spin(M)$. A \emph{string structure} on $M$ is a principal $\String(d)$-bundle gerbe $\String(M)$ that lifts the structure group of $\Spin(M)$  along the 2-group homomorphism
\begin{equation*}
q:\String(d) \to \Spin (d)_{\dis} 
\end{equation*}
from \cref{projection-to-spin}. We refer to \cite[\S 7]{Nikolausa} and to \cref{sec:strstr} for a discussion and comparison to yet other versions of string structures. The main idea is to view the bundle gerbe $\String(M)$ as the string-oriented frame bundle of the string manifold $M$.

  We describe now how to convert a fusive loop-spin structure into a string structure. 
We provide in \cref{sec:strstr} a proof showing that  this conversion is well-defined and fits into a partially known picture of equivalences between different notions of string structures.
We first have to fix a point $x\in M$ and a spin-oriented frame ${\hat x}$ at $x$, i.e. an element $\hat x \in \Spin(M)_x$. Then, the bundle gerbe $\stringgerbe(M)$ consists of the following structure:
\begin{itemize}
        \item 
                        The surjective submersion  $Y:=P_{\hat x}\Spin(M) \to M$ is the endpoint evaluation, followed by the bundle projection $\Spin(M) \to M$.
        \item 
                Over the double fibre product $Y^{[2]}$, we have the following $\String(d)$-principal bundle (in the sense of \cref{def:principalgammabundle}): 
                \begin{itemize}
                        \item 
                                Its total space is (see \cref{AppEqBundleQ}) 
                                \begin{equation}
                                \label{EqBundleQ}
                                P:= P_{\hat x}\Spin(M)^{[2]} \times_{L\Spin(M)} \widetilde{L\Spin}(M) \times P_e\Spin(d)\text{,}
                                \end{equation}
                                where $P_{\hat x}\Spin(M)^{[2]}$ denotes the fibre product over $\Spin(M)$. Thus, its elements are quadruples $(\beta_1,\beta_2,\Phi,\gamma)$,
                                where $\beta_1, \beta_2 \in P_{\hat x} \Spin(M)$ with $\beta_1(\pi) = \beta_2(\pi)$, $\Phi$ is a lift of $\beta_1 \cup \beta_2 \in L\Spin(M)$ to $\widetilde{L \Spin}(M)$ and $\gamma \in P_e \Spin(d)$. 
\\
\begin{comment}
The data of $\beta_1$ and $\beta_2$ is redundant; we may also view $P$ as a subspace of $\widetilde{L\Spin}(M) \times P_e\Spin(d)$ consisting of pairs $(\Phi,\gamma)$ such that $\Phi$ projects to a loop in  $L\Spin(M)$ which is flat at $0$ and $\pi$. 
\end{comment}
                                %
                        \item
                                The bundle projection is (see \cref{AppEqBundleProjectionQ}) 
                                \begin{equation} 
                                \label{EqBundleProjectionQ}
                                        \recall{AppEqBundleProjectionQ}.
                                \end{equation}
                               \begin{comment}
                                Observe here that elements $\tilde{\beta}_1, \tilde{\beta}_2 \in \mathcal{P}_0 \times_M \mathcal{P}_0$ need not satisfy $\tilde{\beta}_1(\pi) = \tilde{\beta}_2(\pi)$ (generally, these elements only lie in the same fibre of $\Spin(M)$ over $M$), while for $(\beta_1,\beta_2,\Phi) \in \mathcal{P}_1$, we do have $\beta_1(\pi) = \beta_2(\pi)$.
                                The additional element $\gamma \in P_e \Spin(d)$ takes care of this discrepancy.
\end{comment}
                                %
                        \item
                                The anchor map is (see \cref{AppEqAnchorQ}) 
                                \begin{equation} \label{EqAnchorQ}
                                        \recall{AppEqAnchorQ}.
                                \end{equation} 
\item
the principal $\String(d)_s$-action is (see \cref{AppEqPrincipalActionQ}) 
\begin{equation}
\label{EqPrincipalActionQ}
\recall{AppEqPrincipalActionQ}\text{.}
\end{equation}                                
Here, $i: P_e\Spin(d) \to \widetilde{L\Spin}(d)$ is the fusion factorization of \cref{FusionFactorization}. 
                                
                \end{itemize}
        \item
                On the triple fibre product $Y^{[3]}$, the  bundle gerbe product
                \begin{equation} 
                \label{EqBundleGerbeProduct}
                        \mu_{\stringgerbe(M)} : \pr_{23}^{*}P \otimes \pr_{12}^{*}P \to \pr_{13}^* P
                \end{equation}
is given by (see \cref{Appeq:bgp:G})
                \begin{equation}
                        \label{eq:bgp:G}
                        \recall{Appeq:bgp:G}\text{,}
                \end{equation}
                where $\lambda$ is the fusion product of the fusive loop-spin structure, see \cref{DefinitionFusiveSpinStructure}. 
\end{itemize}
This completes the definition of the (smooth) $\String(d)$-bundle gerbe $\stringgerbe(M)$. 
Later in \cref{sec:identification}, we will pass to the underlying topologies and regard $\stringgerbe(M)$ as a topological bundle gerbe, without introducing an explicit notation. 

\subsection{The stringor bundle of Stolz-Teichner}

%\subsection{The 2-vector bundle obtained from the spinor bundle on loop space}\label{sec:stringorBundle}

\label{sec:stringor}

In this section, we construct a  2-Hilbert bundle $\mathscr{S}(\widetilde{L\Spin}(M))$ on the string manifold $M$, using a fusive loop-spin structure $\widetilde{L\Spin}(M)$ on the loop space $LM$ of  $M$.
The construction stems from the loop space approach to string geometry; it has been outlined by Stolz-Teichner \cite{stolz3} and then constructed rigorously in \cite{Kristel2019,kristel2020smooth,Kristel2019c} in a setting of \quot{rigged von Neumann algebra bundles} over diffeological spaces, which is a method to work with smooth, infinite-dimensional bundles over  infinite-dimensional manifolds.

In the following, we  give an independent definition of the Stolz-Teichner stringor bundle in a purely topological setting, and we show afterwards that it reflects the construction given in \cite{Kristel2019c}. The first ingredient  is the associated von Neumann algebra bundle (see \cref{associated-von-Neumann-algebra-bundle})
\begin{equation}
\label{eq:A}
\mathcal{A} := P\Spin(M) \times_{P\Spin(d)} A,
\end{equation}
where $P\Spin(M)$ is the principal $P\Spin(d)$-bundle over $PM$ obtained by taking flat paths in the total space of the spin structure $\Spin(M)$, and  $P\Spin(d)$ acts on $A$ through through the homomorphism $\omega$ from \cref{KleinOmega}. 
We note that the definition of $\mathcal{A}$ only requires a spin structure on $M$, not the loop-spin structure.

The second ingredient 
is the \emph{spinor bundle on loop space}:  the associated Hilbert bundle 
\begin{equation}
\label{DefinitionSpinorBundle}
  \mathcal{S}_{LM} := \widetilde{L\Spin}(M) \times_{\widetilde{L\Spin}(d)} L^2(A),
\end{equation}
where $\widetilde{L\Spin}(M)$ is the loop-spin structure, and  $\widetilde{L\Spin}(d)$ acts on $L^2(A)$ via the representation $\Omega'$ defined in \cref{representation-on-standard-bimodule}.
We exhibit the pullback $\cup^*\mathcal{S}_{LM}$ to $PM^{[2]}$ as a $\pr_2^{*}\mathcal{A}$-$\pr_1^{*}\mathcal{A}$-bimodule bundle. 
The bimodule actions are defined by
\begin{equation}
\label{LeftRightActionFLM}
[\beta_2,a] \lact [\Phi, \xi] \ract [\beta_1,b] = [\Phi, a \lact \xi \ract b]\text{,}
\end{equation}
where $\Phi \in \widetilde{L\Spin}(M)$ projects to $\beta_1\cup\beta_2 \in L\Spin(M)$.
We show the well-definedness of this bimodule structure:
if $\Phi^\prime$ projects to $\beta^\prime_1 \cup \beta_2^\prime$, then $\beta_i^\prime = \beta_i \cdot \gamma_i$ for $(\gamma_1, \gamma_2) \in P_e \Spin(d)^{[2]}$ and $\Phi^\prime = \Phi \cdot X$ for some $X \in \widetilde{L\Spin}(d)$ projecting to $\gamma_1 \cup \gamma_2$.
We recall that  $ \Omega'(X)$ is left intertwining along $\omega(\gamma_2)$ and right intertwining along $\omega(\gamma_1)$, see \cref{IntertwiningOmegaprime}.
Hence, if formula \eqref{LeftRightActionFLM} holds for $\beta_1, \beta_2$ and $\Phi$, then we also have
\begin{align*}
[\beta_2^\prime,a] \lact [\Phi^\prime, \xi] \ract [\beta_1^\prime,b]
&= [\beta_2, \omega_{\gamma_2}(a)] \lact [\Phi, \Omega(X) \xi] \ract [\beta_1, \omega_{\gamma_1}(b)]\\
&= [\Phi, \omega_{\gamma_2}(a) \lact \Omega(X) \xi \ract \omega_{\gamma_1}(b)]\\
&= [\Phi, \Omega(X)(a \lact \xi \ract b)]\\
&= [\Phi^\prime, a \lact \xi \ract b].
\end{align*}
Any local section $\Phi$ of $\widetilde{L\Spin}(M)$ provides a local trivialization of $\mathcal{S}_{LM}$ (as a Hilbert bundle).
It then follows directly from the formula \cref{LeftRightActionFLM} that the induced local trivialization $\cup^*\Phi$ of $\cup^*\mathcal{S}_{LM}$ is a bimodule trivialization.
%
\begin{comment}
Suppose that $\Phi$ is defined on $O \subset LM$, so that $\cup^*\Phi$ is defined on $\cup^*O = \{(\gamma_1, \gamma_2) \in PX^{[2]} \mid \gamma_1 \cup \gamma_2 \in O \}$.
Let $\beta_1, \beta_2$ be the  local trivializations of $\pi_2^*\mathcal{A}$ and $\pi_1^*\mathcal{A}$ obtained from $\cup^*\Phi$ by postcomposing with the projection maps $\cup^*\widetilde{L\Spin}(M) \to \cup^*L\Spin(M) \to \pi_i^*P\Spin(M)$.
In other words, for each $(\gamma_1, \gamma_2) \in \cup^*O$, $\Phi_{\gamma_1, \gamma_2}$ projects to $\beta_1 \cup \beta_2$.
Then the $\beta_i$ provide local trivializations of $\pi_i^*\mathcal{A}$ in the obvious way and \cref{LeftRightActionFLM} means that $\cup^*\Phi$ is fibrewise intertwining along these trivializations.
\end{comment}

The third ingredient is 
 the \emph{fusion product} on  $\mathcal{S}_{LM}$, see \cite{stolz3}, \cite[\S5.3]{Kristel2019c}, and \cite[Def. 2.15]{ludewig2023spinor}: a unitary isomorphism
\begin{equation}
\label{FusionProductForS}
\Upsilon: \pr_{23}^{*}\cup^{*}\mathcal{S}_{LM} \boxtimes \pr_{12}^{*}\cup^{*}\mathcal{S}_{LM} \to \pr_{13}^{*}\cup^{*}\mathcal{S}_{LM}
\end{equation}
of $\pr_3^{*}\mathcal{A}$-$\pr_1^{*}\mathcal{A}$-bimodule bundles over $PM^{[3]}$ that satisfies the following associativity condition over $PM^{[4]}$:
\begin{equation}
\label{associativity-of-fusion-product}
\alxydim{@C=6em}{\pr_{34}^{*}\cup^{*}\mathcal{S}_{LM} \boxtimes \pr_{23}^{*}\cup^{*}\mathcal{S}_{LM} \boxtimes \pr_{12}^{*}\cup^{*}\mathcal{S}_{LM} \ar[r]^-{\pr_{234}^{*}\Upsilon \boxtimes \id} \ar[d]_{\id \boxtimes \pr_{123}^{*}\Upsilon} & \pr_{24}^{*}\cup^{*}\mathcal{S}_{LM} \boxtimes \pr_{12}^{*}\cup^{*}\mathcal{S}_{LM} \ar[d]^{\pr_{124}^{*}\Upsilon} \\ \pr_{34}^{*}\cup^{*}\mathcal{S}_{LM} \boxtimes \pr_{13}^{*}\cup^{*}\mathcal{S}_{LM} \ar[r]_-{\pr_{134}^{*}\Upsilon} & \pr_{14}^{*}\cup^{*}\mathcal{S}_{LM} }
\end{equation}
In order to construct $\Upsilon$, we recall that whenever  $\Phi: O \to \widetilde{L\Spin}(M)$ is a local section of the principal bundle $\widetilde{L\Spin}(M)$, we obtain a local trivialization $u: \mathcal{S}_{LM}|_O \to O \times L^2(A)$ of the associated bundle  $\mathcal{S}_{LM}$ by requiring that $u([\Phi,\xi])=\xi$ holds for all $\xi \in L^2(A)$.

\begin{theorem}
\label{unique-fusion-product}
Let $M$ be a spin manifold equipped with a fusive loop-spin structure $\widetilde{L\Spin}(M)$ with fusion product $\lambda$. Then, the spinor bundle on loop space $\mathcal{S}_{LM}$ admits a unique fusion product $\Upsilon$ such that the following condition holds: whenever $O\subset PM^{[3]}$ is an open set with sections $\Phi_{ij}:O \to \widetilde{L\Spin}(M)$ along $\cup\circ \pr_{ij}:PM^{[3]} \to LM$ such that $\Phi_{13}=\lambda(\Phi_{23} \otimes \Phi_{12})$, and $u_{ij}: \pr_{ij}^{*}\cup^{*}\mathcal{S}_{LM}|_O \to O \times L^2(A)$ are the corresponding local trivializations, then the diagram
\begin{equation*}
\xymatrix@C=6em{
 \pr_{23}^{*}\cup^{*}\mathcal{S}_{LM} \boxtimes \pr_{12}^{*}\cup^{*}\mathcal{S}_{LM} \ar[d]_{u_{23} \boxtimes u_{12}} \ar[r]^-{\Upsilon} &
  \pr_{13}^{*}\cup^{*}\mathcal{S}_{LM} \ar[d]^{u_{13}} \\
L^2(A) \boxtimes L^2(A) \ar[r]_-{\chi} & L^2(A)
}
\end{equation*}
is commutative, where $\chi$ is the natural isomorphism \cref{IsoOfTwistedStandardForms}. 
\end{theorem}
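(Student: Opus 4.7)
The plan is to define $\Upsilon$ fibrewise by the formula forced by the stated commutative diagram, then verify independence of the choice of local sections using the equivariance of $\lambda$ together with the naturality of $\chi$ from \cref{Naturality}, and finally deduce continuity, the bimodule intertwiner property, and associativity.

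\textbf{Uniqueness and local construction.} Fix $(\beta_1,\beta_2,\beta_3)\in PM^{[3]}$. Choose local sections $\Phi_{12}$ and $\Phi_{23}$ of $\widetilde{L\Spin}(M)$ over neighbourhoods of $\beta_1\cup \beta_2$ and $\beta_2\cup \beta_3$ respectively, and set $\Phi_{13}\defeq \lambda(\Phi_{23}\otimes \Phi_{12})$, which is a local section near $\beta_1\cup \beta_3$. The stated diagram forces
\begin{equation*}
\Upsilon\bigl([\Phi_{23},\xi]\boxtimes [\Phi_{12},\eta]\bigr) = \bigl[\Phi_{13},\chi(\xi\boxtimes \eta)\bigr].
\end{equation*}
Since every element of $\pr_{23}^{*}\cup^{*}\mathcal{S}_{LM}\boxtimes \pr_{12}^{*}\cup^{*}\mathcal{S}_{LM}$ is of this form, this pins $\Upsilon$ down uniquely.

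\textbf{Existence and well-definedness.} For existence I must check that the above formula is independent of the choice of $\Phi_{12}$ and $\Phi_{23}$. Replacing $\Phi_{ij}\mapsto \Phi_{ij}\cdot X_{ij}$ for $X_{ij}\in \widetilde{L\Spin}(d)$ projecting to $\gamma_i\cup \gamma_j$, the equivariance \cref{eq:FusionIsEquivariant} combined with \cref{fusion-product-on-basic-ce} replaces $\Phi_{13}$ by $\Phi_{13}\cdot \mu(X_{23}\otimes X_{12})$, while the associated-bundle relation transforms the representatives $\xi,\eta$ and the target by the operators $\Omega'(X_{ij})$ and $\Omega'(\mu(X_{23}\otimes X_{12}))$, respectively. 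Comparing the two outputs reduces, after unwinding, to a special case of the naturality diagram \cref{Naturality} for $\chi$, in which the automorphism $\varphi$ playing the intermediate role is $\omega_{\gamma_2}$. The intertwining relations \cref{IntertwiningOmegaprime} of $\Omega'$ are precisely what is needed to verify the hypothesis of \cref{Naturality}.

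\textbf{Bimodule property and associativity.} Continuity of $\Upsilon$ is automatic from its local description. The bimodule intertwiner property with respect to the $\mathcal{A}$-actions follows immediately from the definition \cref{LeftRightActionFLM} together with the corresponding property of $\chi$. For the associativity diagram \cref{associativity-of-fusion-product} over $PM^{[4]}$, I pick sections $\Phi_{12},\Phi_{23},\Phi_{34}$ and define the remaining $\Phi_{ij}$ by iterated application of $\lambda$; associativity of $\lambda$ (built into \cref{DefinitionFusiveSpinStructure}) ensures that the two resulting constructions of $\Phi_{14}$ agree, whereupon the diagram for $\Upsilon$ reduces to the associativity of the isomorphisms $\chi_{\theta_1,\theta_2}$ between twisted standard bimodules, which is part of what makes the functor $\mathcal{T}$ from \cref{StrictToNonStrictVersion} a monoidal functor.

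\textbf{Main obstacle.} The principal technical difficulty lies in the well-definedness step: aligning the left/right intertwining conventions of $\Omega'$ (which are swapped relative to $\Omega$; see the remark following \cref{representation-on-standard-bimodule}) with the specific form \cref{fusion-product-on-basic-ce} of the fusion product $\mu$ on the basic central extension, and matching these up with the precise shape of the naturality square \cref{Naturality}. Once this bookkeeping is done, the remaining steps are essentially formal.
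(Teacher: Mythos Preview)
Your proposal is correct and follows essentially the same approach as the paper: define $\Upsilon$ locally via the forced formula, reduce well-definedness under change of sections $\Phi_{ij}\mapsto\Phi_{ij}\cdot X_{ij}$ to the naturality square \cref{Naturality} for $\chi$ with $\theta_i=\theta_i'=\id$ and $\varphi=\omega_{\gamma_2}$, and then invoke \cref{OmegaAndI} together with the identity $X_{13}=X_{23}\,i(\gamma_2)^{-1}X_{12}$ from \cref{fusion-product-on-basic-ce}. The paper carries out exactly this computation explicitly and likewise defers associativity to the coherence of $\chi$ and $\lambda$.
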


\begin{proof}
It is clear that every point $(\beta_1,\beta_2,\beta_3)\in PM^{[3]}$ has an open neighborhood $O$ over which sections $\Phi_{ij}$ with $\Phi_{13}=\lambda(\Phi_{23}\otimes \Phi_{12})$ exist.
This shows uniqueness of $\Upsilon$.
%
\begin{comment}
Choose a section $s$ of $P\Spin(M)^{[3]} \to PM^{[3]}$ defined on a neighborhood $O \subset PM^{[3]}$ of $(\beta_1, \beta_2, \beta_3)$.
Then let $\Phi_{23} \otimes \Phi_{12}$ be a section of $\mathrm{pr}_{23}^*\cup^{*} \widetilde{L\Spin}(M) \otimes \mathrm{pr}_{12}^*\cup^{*} \widetilde{L\Spin}(M)$ defined in a neighborhood $\tilde{O} \subseteq L\Spin(M)^{[3]}$ of $s(\beta_1, \beta_2, \beta_3)$ and set $\Phi_{13} = \lambda(\Phi_{23},\Phi_{12})$.
After making $O$ smaller, we may assume that $s(O) \subseteq \tilde{O}$.
Then $s^*\Phi_{12}$, $s^*\Phi_{13}$ and $s^*\Phi_{13}$ are as desired.
\end{comment}
For existence, we define $\Upsilon|_O$ separately on each open set $O$ for some fixed choices of sections $\Phi_{ij}$, in such a way that above diagram is commutative. This yields  unitary isomorphisms of bimodule bundles.  Next we show that these isomorphisms do not depend on the choice of sections. This is proved in \cite[Thm. 5.3.1]{Kristel2019c}; for the sake of clarity we  adapt the proof to the present setting.  

Let $\Phi_{ij}^\prime = \Phi_{ij} \cdot X_{ij}$ be a different choice of local sections, with $X_{ij} \in \widetilde{L\Spin}(d)$ lifting the loop $\gamma_i \cup \gamma_j \in L\Spin(d)$, and such that $\Phi_{13}^\prime = \lambda(\Phi_{23}^\prime \otimes  \Phi_{12}^\prime)$. 
By \cref{eq:FusionIsEquivariant,fusion-product-on-basic-ce}, this implies that  
\begin{equation}
\label{X13FromX23AndX12}
X_{13} =\mu(X_{23} \otimes X_{12})= X_{23} \, i(\gamma_2)^{-1} X_{12}.
\end{equation}
Let $u_{ij}^\prime$ be the local trivialization of $\mathrm{pr}_{ij}^*\cup^{*}\mathcal{S}_{LM}$ corresponding to $\Phi_{ij}^\prime$.
Then, $u_{ij}^\prime =\Omega'(X_{ij}) \circ u_{ij}$, and by functoriality of Connes fusion,
\begin{equation*}
  u_{23}^\prime \boxtimes u_{12}^\prime = \bigl(\Omega'(X_{23}) \boxtimes \Omega'(X_{12})\bigr) \circ(u_{23} \boxtimes u_{12}) .
\end{equation*}
Therefore,
\begin{equation*}
\begin{aligned}
\Upsilon^\prime_O &=  (u_{13}^\prime)^* \chi (u_{23}^\prime \boxtimes u_{12}^\prime) = u_{13}^*\Omega'(X_{13})^* \chi \bigl(\Omega'(X_{23}) \boxtimes \Omega'(X_{12})\bigr) (u_{23} \boxtimes u_{12}).
\end{aligned}
\end{equation*}
Comparing with $\Upsilon_O = u_{13}^* \chi (u_{23} \boxtimes u_{12})$, we aim to show
\begin{equation}
\label{AimToShow}
\Omega' (X_{13}) \chi  = \chi (\Omega'(X_{23}) \boxtimes \Omega'(X_{12}))
\end{equation}
By \eqref{IntertwiningOmegaprime}, $\Omega'(X_{ij}) = u_{ij}^\prime \circ u_{ij}^*$ is intertwining along $\omega_{\gamma_j}$ and $\omega_{\gamma_i}$.
By the commutative diagram \cref{Naturality} for $\chi$ (applied with $\theta_1 = \theta_2 = \theta_1^\prime = \theta_2^\prime = \id$ and $\varphi = \omega_{\gamma_2}$), we therefore obtain
\begin{equation*}
\begin{aligned}
\chi(\Omega'(X_{23}) \boxtimes \Omega'(X_{12})) = \Omega'(X_{23}) L^2(\omega_{\gamma_2})^* \Omega'(X_{12}) \chi = \Omega'(X_{23} \,i(\gamma_2)^{-1} X_{12}) \chi,
\end{aligned}
\end{equation*}
where we used \eqref{OmegaAndI} in the second step.
The desired identity \eqref{AimToShow} now follows from \eqref{X13FromX23AndX12}.
It remains to prove that the associativity condition \cref{associativity-of-fusion-product} holds -- this follow from the associativity of $\chi$ and of Connes fusion is carried out as \cite[Prop. 5.3.3]{kristel2020smooth}.
\end{proof}

We are now in position to give a complete definition of  Stolz-Teichner's stringor bundle as a 2-Hilbert bundle. 

\begin{definition}%[Stolz-Teichner stringor bundle]
\label{StringBundle}
Let $M$ be a spin manifold equipped with a fusive loop-spin structure $\widetilde{L\Spin}(M)$, and let $x\in M$. 
The \emph{stringor bundle} $\mathscr{S}(\widetilde{L\Spin}(M))$ of $M$ (relative to the base point $x$) is the 2-Hilbert bundle over $M$ with
\begin{itemize}
\item 
locally split map $\ev_\pi: P_xM \to M$,

\item
the restriction of the von Neumann algebra bundle $\mathcal{A}$ defined in \cref{eq:A} to $P_xM \subset PM$;

\item
the restriction of the bimodule bundle $\cup^{*}\mathcal{S}_{LM}$ defined in \cref{DefinitionSpinorBundle} to $P_xM^{[2]} \subset PM^{[2]}$;

\item
the restriction of the fusion product $\Upsilon$ of \cref{unique-fusion-product} to $P_xM^{[3]}\subset PM^{[3]}$. 
\end{itemize}
\end{definition}

We may sketch this 2-Hilbert bundle as follows:
\begin{equation*}
\label{sketch-stringor-bundle}
        \mathscr{S}(\widetilde{L\Spin}(M)) = \left\{\alxydim{}{
                        \mc{A} \ar[d] & \cup^{*}\mathcal{S}_{LM} \ar[d] & \Upsilon \ar@{.}[d] \\
                        P_xM \ar[d] & P_xM^{[2]} \arr[l] & P_xM^{[3]} \arrr[l]  \\
                        M & &
                }
        \right\}
\end{equation*}

\begin{remark}
A somewhat more general construction  is carried out in \cite[\S2.5]{ludewig2023spinor}, taking as input an arbitrary spinor bundle $\mathcal{S}$ on the loop space $LM$, defined as a certain irreducible left module bundle for the Clifford von Neumann algebra bundle on the loop space (see \cite[Definition~1.4]{ludewig2023spinor} and \cite{LudewigClifford}).
Given the input of a loop-spin structure $\widetilde{L\Spin}(M)$, the bundle $\mathcal{S}_{LM}$ from \cref{DefinitionSpinorBundle} is an example for such a  spinor bundle.
We remark that in \cite{ludewig2023spinor}, the map $\cup$ of  \eqref{CupMap} is replaced by an operation $\fuse$ arising from exchanging the two factors.
The use of $\cup$ here entails the conjugation by $J$ present in the representation $\Omega^\prime$ used in \eqref{DefinitionSpinorBundle}.
\end{remark}

In the remainder of this section we compare the  stringor bundle defined above with  \cite{Kristel2019c}, which represents the -- up to this point -- most complete  construction of the stringor bundle.
As mentioned above,  \cite{Kristel2019c} works in a smooth setting of rigged von Neumann algebra bundles and rigged bimodule bundles, and additionally treats loop spaces and path spaces in the setting of diffeological spaces. More precisely, \cite{Kristel2019c} provides the following structure:
 \begin{itemize}

\item 
a rigged von Neumann algebra bundle $\mathcal{A}\smooth$ over the diffeological space  $P_{si}M$ of paths with sitting instants in $M$.  

\item
a rigged Hilbert bundle $\mathcal{S}_{LM}\smooth$ over $LM$, the \emph{smooth spinor bundle on loop space}. Its pullback along the map  $\cup_{si}: P_{si}M^{[2]} \to LM$  is a rigged von Neumann $\pr_1^{*}\mathcal{A}\smooth$-$\pr_2^{*}\mathcal{A}\smooth$-bimodule bundle.

\item
a fusion product: a fibrewise defined intertwiner of $\mathcal{A}\smooth_{\beta_1}$-$\mathcal{A}\smooth_{\beta_2}$-bimodules
\begin{equation*}
\Upsilon\smooth_{\beta_1,\beta_2,\beta_3}: \mathcal{S}\smooth_{LM}|_{\beta_1\cup\beta_2} \boxtimes \mathcal{S}_{LM}\smooth|_{\beta_2\cup\beta_3} \to \mathcal{S}\smooth_{LM}|_{\beta_1\cup\beta_3}
\end{equation*}
 for each $(\beta_1,\beta_2,\beta_3) \in P_{si}M^{[3]}$. Here, the fibres of the rigged bundles $\mathcal{A}\smooth$ and $\mathcal{S}_{LM}\smooth$ are completed to actual von Neumann algebras and von Neumann bimodules, respectively, and $\boxtimes$ is Connes fusion.  Moreover, the intertwiners $\Upsilon\smooth_{\beta_1,\beta_2,\beta_3}$ are smooth in a certain sense and satisfy an associativity condition over $P_{si}M^{[4]}$ \cite[Prop. 5.3.3]{Kristel2019}.

\end{itemize}
As noticed in \cite[\S 5.4]{Kristel2019c}, above structure is already close to a 2-vector bundle. The only caveat is that in \cite{Kristel2019c}  we have not been able to lift Connes fusion (in the domain of the intertwiners $\Upsilon\smooth_{\beta_1,\beta_2,\beta_3}$) to a rigged setting, and thus were not able to claim that these intertwiners yield a \emph{smooth} homomorphism between rigged Hilbert bundles.

Comparing this with our present version of the stringor bundle comprises three issues: 
the first is to compare the \emph{rigged}  with the \emph{continuous} setting, the second issue is to compare the \emph{diffeological}   with the \emph{manifold} setting, and the third issue is that (in order to meet the conventions we fixed beforehand for 2-vector bundles) the ordering of factors in the fusion product is here opposite to the one of \cite{Kristel2019c}.

Concerning the first issue, we describe in 
\cref{sec:riggedtocontinuous} a general procedure how to complete \emph{rigged} von Neumann algebra bundles $\mathcal{D}$ to \emph{continuous} von Neumann algebra bundles $\mathcal{D}''$ (\cref{prop:fromriggedtocontinuous}), and to complete rigged bimodule bundles $\mathcal{E}$ into continuous bimodule bundles $\widehat{\mathcal{E}}$ (\cref{prop:riggedtocontinuousbimodulebundle}). Concerning the second issue, we recall that Fr\'echet manifolds embed fully faithfully into diffeological spaces, and we note that we have an inclusion $i: P_{si}M \to PM$ from the diffeological  space of paths with sitting instants as in  \cite{Kristel2019c} to the Fr\'echet manifold $PM$ used here. 

The rigged von Neumann algebra bundle $\mathcal{A}\smooth$ over $P_{si}M$ was obtained in \cite[\S 5.1]{Kristel2019c} by pullback along the diagonal map $\Delta_{si}: P_{si}M \to LM$ from a rigged von Neumann algebra bundle $\mathcal{A}\smooth_{LM}$ whose definition is recalled in \cref{ex:riggedcliffordalgebrabundle}, i.e.,  $\mathcal{A}\smooth := \Delta_{si}^{*}\mathcal{A}\smooth_{LM}$. In \cref{ex:cliffordalgebrabundle} we show that $(\mathcal{A}\smooth_{LM})''\cong L\Spin(M) \times_{L\Spin(d)} A$ as von Neumann algebra bundles over $LM$. We have $\Delta^{*}(L\Spin(M) \times_{L\Spin(d)} A)=\mathcal{A}$, the von Neumann algebra bundle defined in \cref{eq:A}. Since $\Delta \circ i = \Delta_{si}$, this shows that we have a canonical isomorphism
\begin{equation}
\label{iso-between-algebra-bundles}
(\mathcal{A}\smooth)'' \cong i^{*}\mathcal{A}
\end{equation}
of continuous von Neumann algebra bundles over $P_{si}M$,  establishing the claimed relation. We remark that the elements on both sides can be represented by pairs $(\gamma,a)$ where $\gamma\in P_{si}\Spin(M)$ and $a\in A$, and that the isomorphism of \cref{iso-between-algebra-bundles} is induced by the identity map on these pairs.

The smooth spinor bundle $\mathcal{S}_{LM}\smooth$ of \cite{Kristel2019c} and the continuous spinor bundle $\mathcal{S}_{LM}$ defined in \cref{DefinitionSpinorBundle} are related by an isomorphism
\begin{equation}
\label{iso-between-spinor-bundles}
\widehat{\mathcal{S}_{LM}\smooth} \cong s^{*}\mathcal{S}_{LM}
\end{equation}  
of Hilbert bundles over $LM$, described in \cref{ex:spinorbundleonloopspace}, where $s:LM \to LM$ is induced by the complex conjugation on $S^1$. 
The elements on both sides can be represented by pairs $(\Phi,v)$ where $\Phi\in \widetilde{L\Spin}(M)$ and $v\in L^2(A)$ and the isomorphism \cref{iso-between-spinor-bundles} is induced by the map $(\Phi,v) \mapsto (\tilde s(\Phi),v)$ on these pairs, where $\tilde s$ lifts $s$ to $\widetilde{L\Spin}(M)$.   
It remains to compare the bimodule structure, for which we first have to address the third issue mentioned above. We let $s_2: P_{si}M^{[2]} \to P_{si}M^{[2]}$ be the swap map, i.e., $s_2(\gamma_1,\gamma_2)=(\gamma_2,\gamma_1)$. We note that $\cup_{si} \circ s_2 = s \circ \cup_{si}$, so that \cref{iso-between-spinor-bundles} becomes an isomorphism
\begin{equation}
\label{iso-between-spinor-bundles2}
\cup_{si}^{*}\widehat{\mathcal{S}_{LM}\smooth} \cong s_2^{*}\cup_{si}^{*}\mathcal{S}_{LM}
\end{equation}  
of Hilbert bundles over $P_{si}M^{[2]}$. By \cref{prop:riggedtocontinuousbimodulebundle} the left hand side is a   $\pr_1^{*}(\mathcal{A}\smooth)'' $-$\pr_2^{*}(\mathcal{A}\smooth)'' $-bimodule bundle.   
We note that under the algebra isomorphisms \cref{iso-between-algebra-bundles} the right hand side also becomes a  $\pr_1^{*}(\mathcal{A}\smooth)'' $-$\pr_2^{*}(\mathcal{A}\smooth)'' $-bimodule bundle, and we claim that the isomorphism \cref{iso-between-spinor-bundles2} is indeed an isomorphism
of bimodule bundles. 
This is proved by observing that  the formulae \cref{LeftRightActionFLM} for left and right actions are precisely those for the right and left actions defined in \cite[Lemma 5.2.1]{Kristel2019c}. 

Finally, we claim that the fusion products coincide fibre-wise, i.e., that
\begin{equation}
\label{upsilon-and-chi}
\Upsilon|_{\beta_3,\beta_2,\beta_1} = \Upsilon\smooth_{\beta_1,\beta_2,\beta_3}
\end{equation}
for all $(\beta_1,\beta_2,\beta_3)\in P_{si}M^{[3]}$. This follows from the fact that both $\Upsilon$ and $\Upsilon\smooth$ are characterized uniquely by the same property, see \cref{unique-fusion-product} and \cite[Thm. 5.3.1]{Kristel2019c}.
We remark that \cref{upsilon-and-chi} shows, in particular, that the fibrewise defined intertwiners $\Upsilon\smooth_{\beta_1,\beta_2,\beta_3}$ form a continuous morphism between bimodule bundles.

\subsection{The stringor bundle is an associated bundle}

\label{sec:identification}

In this section, we prove the main result of this article:

\begin{theorem}
\label{th:main}
Let $M$ be a spin manifold equipped with a fusive loop-spin structure $\widetilde{L\Spin}(M)$. Let $\String(M)$ be the corresponding string structure constructed in \cref{SubsectionStringStructuresFusiveLoopSpin}. Then, there is a canonical isomorphism
        \begin{equation*}
                \stringgerbe(M) \times_{\String(d)} A \cong \mathscr{S}(\widetilde{L\Spin}(M))
        \end{equation*}
        of 2-Hilbert bundles over $M$,  between the 2-Hilbert bundle associated with $\stringgerbe(M)$ and the stringor representation   $\Rep:\String(d) \to \mathcal{U}(A)$  and the Stolz-Teichner stringor bundle. 
\end{theorem}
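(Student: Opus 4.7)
The strategy is to exhibit a refinement in the sense of \cref{def:refinement} from $\stringgerbe(M) \times_{\String(d)} A$ to $\mathscr{S}(\widetilde{L\Spin}(M))$, using the natural map $\rho : P_{\hat x}\Spin(M) \to P_x M$ induced by the spin bundle projection $\Spin(M) \to M$ applied pointwise. This satisfies the required base compatibility, matching the two covers of $M$ used by the two 2-Hilbert bundles. The algebra isomorphism $\varphi : \underline{A} \to \rho^* \mathcal{A}$ is built from the canonical trivialization of the associated bundle $\mathcal{A} = P\Spin(M) \times_{P\Spin(d)} A$: for $\beta \in P_{\hat x}\Spin(M)$, send $a \in A$ to $[\beta, a] \in \mathcal{A}_{\rho(\beta)}$. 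This is visibly a $*$-isomorphism fibrewise and continuous in $\beta$.

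For the bimodule bundle isomorphism $u : \Mod_\Rep(P) \to (\rho^{[2]})^* \cup^* \mathcal{S}_{LM}$, one proceeds fibrewise. A representative of a class in $\Mod_\Rep(P)_{(\beta_1,\beta_2)}$ is a pair $((\beta_1, \beta_2, \Phi, \gamma), \xi)$, where $\Phi \in \widetilde{L\Spin}(M)$ lifts $\beta_1 \cup \beta_2 \in L\Spin(M)$, hence also lifts the loop $\rho(\beta_1)\cup\rho(\beta_2)$ in $M$, and $\gamma \in P_e\Spin(d)$ accounts for the discrepancy between $\beta_1(\pi)$ and $\beta_2(\pi)$ in the $\Spin(d)$-fibre. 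The natural map is to send such a representative to $[\Phi \cdot i(\gamma), \xi] \in \mathcal{S}_{LM}|_{\rho(\beta_1)\cup\rho(\beta_2)}$, where $i$ is the fusion factorization \cref{FusionFactorization}; the exact twist by $i(\gamma)$ is dictated by formula \cref{EqPrincipalActionQ} for the $\String(d)_s$-action on $P$, together with the compatibility \cref{stringor-rep-comp-with-fusion} ensuring that $\Omega \circ i = L^2 \circ \omega$. One must then check that $u$ descends to equivalence classes, using $\Rep_1 = \Omega$ and its definition; the bimodule-structure compatibility follows by comparing \cref{BimoduleStructureModLambda} (which uses $\Rep_0 = \omega$) with \cref{LeftRightActionFLM}, once the intertwining property \cref{IntertwiningOmegaprime} of $\Omega'$ is combined with the conjugation-by-$J$ convention appearing in both definitions.

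The heart of the proof is the verification of the compatibility diagram \cref{eq:refinementdiag}, which relates the bundle gerbe product $\mu_{\stringgerbe(M)}$ from \cref{eq:bgp:G} to the spinor fusion product $\Upsilon$ from \cref{FusionProductForS}. Both products are ultimately governed by the fusion product $\lambda$ on the fusive loop-spin structure: the bundle gerbe product uses $\lambda$ explicitly in its formula, whereas $\Upsilon$ is characterized by \cref{unique-fusion-product} through a local trivialization diagram for choices of sections $\Phi_{ij}$ satisfying $\Phi_{13} = \lambda(\Phi_{23}\otimes\Phi_{12})$. The plan is to evaluate both sides of \cref{eq:refinementdiag} on local sections of $P$ whose lifts $\Phi_{ij}$ are related by $\lambda$, and to identify the resulting diagram with the one characterizing $\Upsilon$. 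The structure isomorphisms of the monoidal morphism $\Mod_\Rep$ provided by \cref{prop:compassbimod} convert Connes fusion on the associated-bundle side into the natural isomorphism $\chi_{\theta_1, \theta_2}$ of twisted standard bimodules, matching \cref{unique-fusion-product}.

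The principal obstacle is the careful bookkeeping of the auxiliary paths $\gamma \in P_e\Spin(d)$ appearing in the presentation of $P$. They enter the bundle gerbe product $\mu_{\stringgerbe(M)}$ through terms of the form $i(\gamma_j)^{\pm 1}$ coming from the fusion factorization, and they must be shown to cancel against the corresponding twists in the defining diagram of $\Upsilon$ after passage through the structure isomorphisms of $\Mod_\Rep$. This ultimately reduces to the identity $\Omega \circ i = L^2 \circ \omega$ from \cref{stringor-rep-comp-with-fusion}, which ensures that twists by $i(\gamma)$ correspond, under the functor $\mathcal{T}$ of \cref{StrictToNonStrictVersion}, to the canonical implementation entering the Connes fusion of twisted standard bimodules. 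Once this matching is established locally, naturality of $\chi$ and associativity from \cref{unique-fusion-product} and \cref{def:bundlegerbe} patch the local isomorphisms into the desired global refinement.
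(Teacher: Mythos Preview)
Your approach matches the paper's: build a refinement $(\rho,\varphi,u)$ with the footpoint projection $\rho$, the tautological algebra identification $\varphi(\beta,a)=[\beta,a]$, and a bimodule map $u$ on $\Mod_\Rep(P)$ built from the lift $\Phi$ together with a correction involving $\gamma$; then verify \cref{eq:refinementdiag} by reducing both sides to the isomorphism $\chi$ via local trivializations compatible with $\lambda$. This is exactly what the paper does.

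There is, however, a concrete error in your proposed formula for $u$. You send $((\beta_1,\beta_2,\Phi,\gamma),\xi)$ to $[\Phi\cdot i(\gamma),\xi]$, which in $\mathcal{S}_{LM}$ equals $[\Phi,\Omega'(i(\gamma))\xi]=[\Phi,L^2(\omega_\gamma)\xi]$. The paper's map is $[\Phi,L^2(\omega_\gamma)^{*}\xi]$, i.e.\ the \emph{inverse} twist. This is not a convention issue: with your sign the intertwining along $\pr_2^{*}\varphi$ and $\pr_1^{*}\varphi$ fails. Indeed, the right action on $\Mod_\Rep(P)$ is twisted by $\omega_\gamma$ (see \cref{LeftRightActionsAssociatedBundle}), while the right action on $[\Phi,-]$ via $[\beta_1,b]$ is untwisted; passing from one to the other requires applying $L^2(\omega_\gamma)^{*}$, since $L^2(\omega_\gamma)^{*}(a\lact\xi\ract\omega_\gamma(b))=\omega_{\gamma^{-1}}(a)\lact L^2(\omega_\gamma)^{*}\xi\ract b$, which matches $[\beta_2\gamma^{-1},a]=[\beta_2,\omega_{\gamma^{-1}}(a)]$ on the left. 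With $L^2(\omega_\gamma)$ instead you get $\omega_\gamma(a)$ and $\omega_\gamma^2(b)$, which does not. The same inverse is what makes $u$ descend through the $\String(d)_s$-quotient: the paper's well-definedness calculation hinges on $J\Omega(X)J\cdot J\Omega(X)^{*}J=\id$ after collecting the $L^2$ factors, and this cancellation does not occur with the opposite sign. Once you correct the twist to $i(\gamma)^{-1}$ (equivalently $L^2(\omega_\gamma)^{*}$), the rest of your outline goes through as in the paper; the fusion compatibility is then established exactly as you describe, by assembling \cref{DefiningDiagramMonoidalityMorphism}, \cref{FunctorModProperty}, the naturality square \cref{Naturality}, and the defining diagram of $\Upsilon$ from \cref{unique-fusion-product} into one large commutative diagram.
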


We start by spelling out the details of the associated 2-Hilbert bundle $\stringgerbe(M) \times_{\String(d)} A$ on the basis of \cref{SectionAssociatedBimoduleBundles}, but now using the explicit form of the string structure $\String(M)$ from \cref{SubsectionStringStructuresFusiveLoopSpin}.
This 2-Hilbert bundle consists of the locally split map $\ev_\pi: P_{\hat x}\Spin(M) \to M$, and the trivial von Neumann algebra bundle $\underline{A}=P_{\hat x}\Spin(M) \times A$ with typical fibre $A$.
Over $P_{\hat x}\Spin(M)^{[2]}$, we have the $\underline{A}$-$\underline{A}$-bimodule bundle 
\begin{equation*}
\Mod_\Rep(P) = (P \times L^2(A)) / \String(d)_s,
\end{equation*}
where $P$ is the principal $\String(d)_s$-bundle \cref{EqBundleQ}. 
 Hence the elements of $\Mod_\Rep(P)$ are represented by pairs $(p,\xi) \in P \times L^2(A)$, subject to the equivalence relation
\begin{equation}
\remember{SecondEquivalenceRelation}{(p,\xi) \sim \bigl(p \cdot (\gamma',e,X), \xi \ract \Rep_1(X)\bigr)}
\end{equation}
for any $p \in P$ and $(\gamma',e, X) \in \String(d)_s$. 
The bimodule actions are given by
\begin{equation} \label{LeftRightActionsAssociatedBundle}
\begin{aligned}
        a \lact (p ,\xi) \ract b &= (p ,a \lact \xi \ract \omega_{\gamma}(b)) \qquad \text{for} \quad p = (\beta_1,\beta_2,\Phi,\gamma) \text{.}
\end{aligned}
\end{equation}
Finally, the intertwiner over $P_{\hat x}\Spin(M)^{[3]}$ is the morphism $\Mod_{\Rep}(\mu)$ defined in \cref{DefinitionModTilde}.
The whole structure may be sketched as follows:
\begin{equation*}
\stringgerbe(M) \times_{\String(d)} A = \left\{\alxydim{}{
                        \underline{A} \ar[d] & \Mod_\Rep(P) \ar[d] & \Mod_\Rep(\mu) \ar@{.}[d] \\
                        P_{\hat x} \Spin(M) \ar[d] & P_{\hat x} \Spin(M)^{[2]} \arr[l] & P_{\hat x} \Spin(M)^{[3]} \arrr[l]  \\
                        M & &
                }
        \right\}
\end{equation*}

The isomorphism in \cref{th:main} is constructed as a refinement (see \cref{def:refinement}) from the associated 2-Hilbert bundle $\stringgerbe(M) \times_{\String(d)} A$ to the Stolz-Teichner stringor bundle (see \cref{sketch-stringor-bundle}), depicted as follows:
\begin{equation*}
\alxydim{@C=2em}{\underline{A} \ar[drr]^\varphi \ar[dd] &&& \Mod_\Rep(P) \ar[dd]\ar[drr]^u\\
 && \mc{A} \ar[dd] &&&\cup^{*}\mathcal{S}_{LM} \ar[dd]  \\
P_{\hat x}\Spin(M) \ar[drr]^\rho \ar[rdd]&& & P_{\hat x}\Spin(M)^{[2]} \arr[lll] \ar[drr]^{\rho^{[2]}}  
 \\
&&    P_xM \ar[dl] &&& P_xM^{[2]} \arr[lll]  
               \\
                   & M
}
\end{equation*}
 The first ingredient is the \quot{foot point} projection
\begin{equation*}
        \rho:P_{\hat x}\Spin(M) \to P_xM,
        \qquad \rho(\beta)(t) := r(\beta(t))\text{,}
\end{equation*}
where $r: \Spin(M) \to M$ is the bundle projection, going between the domains of the locally split maps of the two 2-Hilbert bundles.
The map $\rho$ is covered by the map
\begin{equation}
\label{AlgebraIsoOfIso}
\varphi: \underline{A} \to \mathcal{A}, \qquad (\beta, a) \mapsto [\beta, a],
\end{equation}
which yields  an isomorphism $\varphi:\underline{A} \to \rho^{*}\mathcal{A}$ of von Neumann algebra bundles over $P_{\hat x}\Spin(M)$.

The second ingredient is the map 
\begin{equation*}
u: \Mod_{\mathcal{R}}(P) \to \mathcal{S}_{LM}, \qquad [\beta_1,\beta_2,\Phi,\gamma,\xi] \mapsto[\Phi,L^2(\omega_{\gamma})^{*}\xi],
\end{equation*}
where $\omega_{\gamma} \in \Aut(A)$ is the automorphism obtained from the map $\omega$ in \cref{KleinOmega} and $L^2(\omega_{\gamma})$ is its canonical implementation \cref{CanonicalImplementation}.

\begin{lemma}
The map $u$ induces a well-defined morphism  $u: \Mod_{\mathcal{R}}(P) \to (\rho^{[2]})^{*}\cup^{*}\mathcal{S}_{LM}$ of bimodule bundles over $P_{\hat x}\Spin(M)^{[2]}$, intertwining along $\mathrm{pr}_2^*\varphi$ and $\mathrm{pr}_1^*\varphi$.
\end{lemma}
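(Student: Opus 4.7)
The verification has four parts, corresponding to the target being well‑defined as a point of the correct fibre, the source being well‑defined on equivalence classes, the bimodule intertwining property, and continuity.

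First, I would check that the formula lands in the claimed fibre. Given $(\beta_1,\beta_2,\Phi,\gamma)\in P$, the element $\Phi\in\widetilde{L\Spin}(M)$ projects to $\beta_1\cup\beta_2\in L\Spin(M)$, which in turn projects to $r\circ(\beta_1\cup\beta_2)=\rho(\beta_1)\cup\rho(\beta_2)\in LM$. Hence $[\Phi,L^2(\omega_\gamma)^*\xi]$ sits in $\mathcal{S}_{LM}|_{\rho(\beta_1)\cup\rho(\beta_2)}=((\rho^{[2]})^*\cup^*\mathcal{S}_{LM})_{(\beta_1,\beta_2)}$, and by construction the map covers the identity on $Y^{[2]}$.

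Next, well‑definedness under the equivalence relation \eqref{SecondEquivalenceRelation}. Let $g=(\gamma',e,X)\in\String(d)_s$, so that $X$ lifts $\gamma'\cup e$. I would spell out $p\cdot g$ using the explicit principal action \eqref{EqPrincipalActionQ}; this produces a new quadruple in which $\Phi$ is modified by right multiplication with a factor built from $X$ and $i(\gamma)$, and $\gamma$ is modified by multiplication with $\gamma'$. The goal is to show
\[
[\Phi,L^2(\omega_\gamma)^*\xi]=[\Phi',L^2(\omega_{\gamma''})^*(\xi\ract\Omega(X))]
\]
in $\mathcal{S}_{LM}$. The right hand side is rewritten using $[\Phi\cdot Y,w]=[\Phi,\Omega'(Y)w]$ and the key identities $\Omega(i(\cdot))=L^2(\omega_\cdot)$ from \eqref{OmegaAndI}, together with $\Omega'(i(\gamma))=JL^2(\omega_\gamma)J=L^2(\omega_\gamma)$ (since canonical implementation commutes with $J$). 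The intertwining property \eqref{IntertwiningOmegaomega} of $\Omega(X)$ along $\omega_{\gamma'}$ and $\omega_e=\id$ then forces everything to collapse to the left hand side. I expect this step to be the main obstacle: the chain of identifications is essentially a bookkeeping problem, and one must carefully track how the fusion factorization $i$ intertwines the $\String(d)_s$‑action on $P$ with the $\widetilde{L\Spin}(d)$‑action on $\mathcal{S}_{LM}$, exactly as in the verification that the bimodule action \eqref{LeftRightActionsAssociatedBundle} descends to the quotient.

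Third, the bimodule intertwining property $u(a\lact m\ract b)=\pr_2^*\varphi(a)\lact u(m)\ract\pr_1^*\varphi(b)$. Starting from \eqref{LeftRightActionsAssociatedBundle} for the source and \eqref{LeftRightActionFLM} for the target, both sides are of the shape $[\Phi,\,\cdot\,]$ after applying the defining formula; the left hand side produces $L^2(\omega_\gamma)^*(a\lact\xi\ract\omega_\gamma(b))$, and using that $L^2(\omega_\gamma)^*$ is both left and right intertwining along $\omega_\gamma^{-1}$ this becomes $\omega_\gamma^{-1}(a)\lact L^2(\omega_\gamma)^*\xi\ract b$, which is to be compared with $a\lact L^2(\omega_\gamma)^*\xi\ract b$ on the right. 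The discrepancy is absorbed by passing to the alternative representative $(\beta_1,\beta_2\gamma^{-1},\Phi\cdot i(\gamma)^{-1}\cdot\mathrm{(correction)},\ldots)$ inside the equivalence class, precisely as in the well‑definedness step; this is really the same computation as part two, applied in the opposite direction.

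Finally, continuity of $u$ follows at once from the local description: pick continuous local sections of the principal $\String(d)$‑bundle $P$ and of $\widetilde{L\Spin}(M)$, and under the induced local trivializations $u$ becomes, fibrewise, the unitary operator $L^2(\omega_\gamma)^*$ depending continuously on $\gamma\in P_e\Spin(d)$ via the continuity of $\omega$ and of canonical implementation (see \cref{CanonicalImplementation}). This completes the plan.
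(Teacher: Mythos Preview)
Your overall strategy matches the paper's, and the well-definedness and continuity parts are handled essentially as in the paper. However, there is a concrete slip in part one that propagates into a real error in part three.

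In part one you write that $[\Phi,L^2(\omega_\gamma)^*\xi]$ lies in $((\rho^{[2]})^*\cup^*\mathcal{S}_{LM})_{(\beta_1,\beta_2)}$. But the bundle projection \eqref{EqBundleProjectionQ} sends $(\beta_1,\beta_2,\Phi,\gamma)\in P$ to $(\beta_1,\beta_2\gamma^{-1})\in Y^{[2]}$, not to $(\beta_1,\beta_2)$. The fibre of the pullback you should be landing in is the one over $(\beta_1,\beta_2\gamma^{-1})$; this happens to be the same Hilbert space (since $\rho(\beta_2\gamma^{-1})=\rho(\beta_2)$), so the fibre check still goes through, but the base point matters for what comes next.

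In part three this bites you. Over $(\beta_1,\beta_2\gamma^{-1})$, the left action is by $\pr_2^*\varphi(\beta_2\gamma^{-1},a)=[\beta_2\gamma^{-1},a]\in\mathcal{A}_{\rho(\beta_2)}$, not by $[\beta_2,a]$. Using the equivalence relation in the associated bundle $\mathcal{A}=P\Spin(M)\times_{P\Spin(d)}A$ one has $[\beta_2\gamma^{-1},a]=[\beta_2,\omega_{\gamma^{-1}}(a)]$, and then \eqref{LeftRightActionFLM} with the representative $\Phi$ (which projects to $\beta_1\cup\beta_2$) gives
\[
[\beta_2\gamma^{-1},a]\lact[\Phi,L^2(\omega_\gamma)^*\xi]\ract[\beta_1,b]
=[\Phi,\omega_{\gamma^{-1}}(a)\lact L^2(\omega_\gamma)^*\xi\ract b],
\]
which matches your left-hand side on the nose. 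There is no discrepancy to absorb. Your proposed fix---changing the representative of the class in $\Mod_{\mathcal{R}}(P)$---cannot work anyway: once you have established well-definedness in part two, both $u$ and the bimodule actions are well-defined on equivalence classes, so changing representatives cannot alter either side of the intertwining identity. The paper's proof simply carries out the direct computation above, using the change of representative in $\mathcal{A}$ rather than in $P$.
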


\begin{proof}
To show well-definedness, we need to show that $u$ is compatible with the equivalence relation \cref{SecondEquivalenceRelation}, using the principal $\String(d)_s$-action on $P$ given in \cref{EqPrincipalActionQ}. 
%
\begin{comment}
That relation is given by:
\begin{equation*}
\recall{SecondEquivalenceRelation}
\end{equation*}
and, when $p=(\beta_1,\beta_2,\Phi,\gamma)$,
\begin{equation*}
\recall{AppEqPrincipalActionQ}\text{.}
\end{equation*}
\end{comment}
%
Here, for $p = (\beta_1, \beta_2, \Phi, \gamma) \in P$ and $( \gamma',e, X) \in \String(d)_s$, we calculate 
\begin{align*}
&\mquad u\bigl(\bigr[p \cdot (\gamma^\prime, e, X), \xi \ract \Omega(X)\bigr]\bigr)
\\&=
 u\bigl([\beta_1,\beta_2\gamma^{-1}\gamma'^{-1}\gamma,\Phi\cdot i(\gamma^{-1})\cdot X\cdot i(\gamma'^{-1}\gamma),\gamma'^{-1}\gamma,\xi \ract \Omega(X)]\bigr) && \text{(Action \cref{EqPrincipalActionQ})}  \\
        &=\bigl[\Phi\cdot i(\gamma)^{-1}\cdot X \cdot i(\gamma'^{-1}\gamma), L^2(\omega_{\gamma'^{-1}\gamma})^{*}(\xi \ract \Omega(X))\bigr] && \text{(Definition of $u$)} 
\\&=\bigl[\Phi, \Omega'\bigl( i(\gamma)^{-1}\cdot X \cdot i(\gamma'^{-1}\gamma)\bigr)L^2(\omega_{\gamma'^{-1}\gamma})^{*}(\xi \ract \Omega(X))\bigr] && \text{(Def.\ of $\mathcal{S}_{LM}$, \cref{DefinitionSpinorBundle})}\mquad 
\\&=\bigl[\Phi, J\Omega\bigl( i(\gamma)^{-1}\cdot X \cdot i(\gamma'^{-1}\gamma)\bigr)JL^2(\omega_{\gamma'^{-1}\gamma})^{*}(\xi \ract \Omega(X))\bigr] && \text{(Def.\ of $\Omega'$, \cref{representation-on-standard-bimodule})}\mquad 
\\&=\bigl[\Phi, J\Omega\bigl( i(\gamma)^{-1}\cdot X \cdot i(\gamma'^{-1}\gamma)\bigr)L^2(\omega_{\gamma'^{-1}\gamma})^{*}J(\xi \ract \Omega(X))\bigr] && \text{($J$ and $L^2(\omega_{\gamma})$ commute)}\mquad 
\\&=\bigl[\Phi, JL^2(\omega_\gamma)^{*} \Omega(X)J(\xi \ract \Omega(X))\bigr] && \text{(Relation  \cref{OmegaAndI})} 
\\&=\bigl[\Phi, JL^2(\omega_\gamma)^{*} \Omega(X)JJ \Omega(X)^*J\xi\bigr] && \text{(Right action, \cref{RightActionModularConjugation})}\mquad         
\\&=[\Phi, L^2(\omega_\gamma)^{*} \xi\bigr] && \text{($J$ and $L^2(\omega_{\gamma})$ commute)}\mquad         
\\&= u\bigl([p, \xi]\bigr)  && \text{(Definition of $u$)}
\end{align*}
Now, $u$ is the quotient map of the continuous map
\begin{equation}
\label{eq:coveringmapUpsilon}
P \times L^2(A) \to \widetilde{L\Spin}(M) \times L^2(A), \qquad (\beta_{1},\beta_{2},\Phi,\gamma,\xi) \mapsto (\Phi, L^2(\omega_{\gamma})^{*}\xi),
\end{equation}    
and hence is  continuous. 
It is also fibre-preserving, as 
\begin{equation*}
\rho^{[2]}(\pr_P(\beta_1,\beta_2,\Phi,\gamma))=\rho^{[2]}(\beta_1,\beta_2\gamma^{-1})=(\rho(\beta_1),\rho(\beta_2\gamma^{-1}))=(\rho(\beta_1),\rho(\beta_2))=\pr_{\mathcal{S}_{LM}}(\Phi,L^2(\omega_{\gamma})^{*}\xi)\text{,}
\end{equation*}
where $\pr_P$ is the  projection \cref{EqBundleProjectionQ} of the principal bundle $P$, and $\pr_{\mathcal{S}_{LM}}$ is  the projection of the spinor bundle on loop space.
To verify the intertwining property, we calculate
\begin{align*}
&\mquad u\bigl((\beta_2\gamma^{-1},a) \lact (\beta_1,\beta_2,\Phi,\gamma,\xi) \ract (\beta_1,b)\bigr)
\\&=u\bigl(\beta_1,\beta_2,\Phi,\gamma,a \lact \xi \ract \omega_\gamma(b) \bigr) && \text{(Actions  \cref{LeftRightActionsAssociatedBundle})} \\
             &= \bigl[\Phi,L^2(\omega_{\gamma})^{*} (a \lact \xi \ract \omega_{\gamma}(b))\bigr]  && \text{(Definition of $u$)}  
             \\&= \bigl[\Phi,\omega_{\gamma^{-1}}(a) \lact L^2(\omega_{\gamma})^{*} \xi \ract b\bigr]  && \text{($L^2(\omega_{\gamma})$  intertwines along $\omega_{\gamma}$)} \mquad 
             \\&=[\beta_2, \omega_{\gamma^{-1}}(a)] \lact [\Phi,L^2(\omega_{\gamma})^{*}\xi] \ract [\beta_1,b]) && \text{(Actions on $\mathcal{S}_{LM}$, \cref{LeftRightActionFLM}})\mquad              \\
             &=[\beta_2\gamma^{-1}, a] \lact [\Phi,L^2(\omega_{\gamma})^{*}\xi] \ract [\beta_1,b]) && \text{(Definition of $\mathcal{A}$, \cref{eq:A})}\mquad 
             \\&=\varphi(\beta_2\gamma^{-1}, a) \lact u(\beta_1,\beta_2,\Phi,\gamma,\xi) \ract \varphi(\beta_1,b)\text{,} && \text{(Definitions of $u$ and $\varphi$)} 
\end{align*}
which is the desired identity.
\end{proof}

So far, we have shown that $u$ is an intertwiner of bimodule bundles, and thus provided the structure of a refinement between 2-Hilbert bundles, see \cref{def:refinement}. 
It remains to check the compatibility with the intertwiners on triple fibre products, see \cref{eq:refinementdiag}. 

\begin{proposition}
\label{prop:fusion}
The following diagram over $P_{\hat x}\Spin(M)^{[3]}$ is commutative:
\begin{equation} \label{Endgegnerdiagram}
\xymatrix@C=6em{\mathrm{pr}_{23}^*\Mod_{\mathcal{R}}(P) \boxtimes \mathrm{pr}_{12}^*\Mod_{\mathcal{R}}(P) \ar[d]_{\mathrm{pr}_{23}^*u \boxtimes \mathrm{pr}_{12}^*u} \ar[r]^-{\Mod_{\mathcal{R}}(\mu)} & \mathrm{pr}_{13}^*\Mod_{\mathcal{R}}(P) \ar[d]^{\mathrm{pr}_{13}^*u}\\   (\rho^{[3]})^*(\mathrm{pr}_{23}^*\cup^{*}\mathcal{S}_{LM} \boxtimes \mathrm{pr}_{12}^*\cup^{*}\mathcal{S}_{LM}) \ar[r]_-{(\rho^{[3]})^*\Upsilon} & (\rho^{[3]})^*\mathrm{pr}_{13}^*\cup^{*}\mathcal{S}_{LM}}
\end{equation}
\end{proposition}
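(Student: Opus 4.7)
The plan is to reduce commutativity of \cref{Endgegnerdiagram} to a single explicit identity between morphisms of trivial $A$-$A$-bimodules, using compatible local trivializations on both sides. Since all bundles are continuous Hilbert bundles and the morphisms are continuous intertwiners of bimodule bundles, it suffices to verify commutativity after trivialization on a neighbourhood of an arbitrarily chosen point $(\beta_1,\beta_2,\beta_3) \in P_{\hat x}\Spin(M)^{[3]}$.

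First, I would pick local sections $\Phi_{12}$ and $\Phi_{23}$ of the fusive loop-spin structure $\widetilde{L\Spin}(M)$ over neighbourhoods of $\rho(\beta_1)\cup\rho(\beta_2)$ and $\rho(\beta_2)\cup\rho(\beta_3)$ in $LM$, and set $\Phi_{13} := \lambda(\Phi_{23}\otimes \Phi_{12})$. Pulled back along $\cup \circ \pr_{ij} \circ \rho^{[3]}$, these give sections over a neighbourhood of $(\beta_1,\beta_2,\beta_3)$ satisfying the hypothesis of \cref{unique-fusion-product}, so that $\Upsilon$ becomes the canonical isomorphism $\chi$ of \cref{IsoOfTwistedStandardForms} under the induced local trivializations of $\cup^*\mathcal{S}_{LM}$. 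For the top row of \cref{Endgegnerdiagram}, I would assemble local sections of $P$ by setting
\[
 s_{ij}(\beta_i,\beta_j) := (\beta_i,\beta_j,\Phi_{ij},e),
\]
where $e \in P_e\Spin(d)$ denotes the constant path. The anchor satisfies $\phi(s_{ij}) = \omega_e = \id$, so the trivializations $\tau_{s_{ij}}$ from \cref{eq:intN} identify $\Mod_{\mathcal{R}}(\pr_{ij}^*P)$ with the untwisted bundle $\underline{L^2(A)}$.

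The crucial computation is then to show, using the explicit formula \cref{eq:bgp:G}, that
\[
 \mu\bigl(s_{23}(\beta_2,\beta_3)\otimes s_{12}(\beta_1,\beta_2)\bigr) = s_{13}(\beta_1,\beta_3),
\]
which reduces to the defining equation $\Phi_{13} = \lambda(\Phi_{23}\otimes\Phi_{12})$ together with the fact that the constant path $e$ fuses trivially via the fusion factorization $i$. Combined with the defining diagram \cref{DefiningDiagramMonoidalityMorphism} of the monoidal structure on $\Mod$, this shows that the top map of \cref{Endgegnerdiagram}, read through the trivializations $\tau_{s_{ij}}$, equals $\chi_{\id,\id} = \chi$, exactly matching the trivialized form of the bottom map. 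Finally, $u([s_{ij},\xi]) = [\Phi_{ij}, L^2(\omega_e)^*\xi] = [\Phi_{ij},\xi]$, so the vertical maps become identities after trivialization, and commutativity follows.

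The main obstacle will be the bookkeeping of the three structure isomorphisms hidden in \cref{DefinitionModTilde}, namely compatibility of $\Mod_{\mathcal{R}}$ with pullbacks, the monoidal structure, and the action of $\mathcal{R}$ on $\mu$, and verifying that the choice $\gamma=e$ for the three sections $s_{ij}$ is simultaneously compatible with all of them. Using $\omega_e = \id$ collapses essentially all the twisting that would otherwise appear, so the proof ultimately reduces to the single observation that both $\mu$ and $\Upsilon$ implement the fusion product $\lambda$ of the loop-spin structure in parallel ways, as already recorded in \cref{unique-fusion-product}.
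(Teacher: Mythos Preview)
There is a genuine gap. Your proposed local sections $s_{ij}(\beta_i,\beta_j) = (\beta_i,\beta_j,\Phi_{ij},e)$ are in general \emph{not} elements of $P$. Recall from \cref{EqBundleQ} that the first two entries of an element of $P$ lie in the fibre product $P_{\hat x}\Spin(M)^{[2]}$ taken over $\Spin(M)$, so they must have the \emph{same} endpoint in $\Spin(M)$; only then does the loop $\beta_i\cup\beta_j$ exist and admit a lift $\Phi_{ij}$. By contrast, a point $(\beta_1,\beta_2,\beta_3)\in P_{\hat x}\Spin(M)^{[3]}=Y^{[3]}$ is a fibre product over $M$: the endpoints $\beta_1(\pi),\beta_2(\pi),\beta_3(\pi)$ lie in the same fibre of $\Spin(M)$ but are typically distinct. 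The role of the coordinate $\gamma$ in $P$, and of the projection \cref{EqBundleProjectionQ}, is precisely to bridge this discrepancy; taking $\gamma=e$ throws that bridge away.

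Consequently the simplification you rely on, that $\omega_e=\id$ collapses all twisting, is unavailable. The paper's proof handles this by first choosing auxiliary lifts $\hat\beta'_i=\hat\beta_i\gamma_i$ with common endpoint in $\Spin(M)$, then building sections $p_{ij}$ of $P$ with last coordinate $\gamma_{ij}=\gamma_j\gamma_i^{-1}$. The resulting trivializations $\tau_{p_{ij}}$ land in the genuinely twisted bimodules $L^2(A)_{\omega_{\gamma_{ij}}}$, and matching the two rows of \cref{Endgegnerdiagram} then requires the full naturality square \cref{Naturality} for $\chi$ together with the intertwining computation \cref{RightMostDiagram} relating $u$, $\tau_{p_{ij}}$, and $u_{ij}$ via $L^2(\omega_{\gamma_j})$. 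Your outline captures the right high-level idea---both sides implement $\lambda$---but the actual proof cannot avoid this layer of twisting.
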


\begin{proof}
We will check the commutativity of  diagram \cref{Endgegnerdiagram} fibrewise in the fibre over  a point $(\hat\beta_1, \hat\beta_2, \hat\beta_3) \in P_{\hat x} \Spin(M)^{[3]}$ (recall that the fibre product is taken along the end-point evaluation to $M$, so that the end points of the paths  $\hat\beta_i$ do not necessarily coincide but lie in the same fibre of $\Spin(M)$).
Write $\beta_i := \rho(\hat\beta_i) \in P_xM$ for their foot point curves. 
We make the following choices:
\begin{itemize}
\item
Let $(\hat\beta'_1,\hat\beta'_2,\hat\beta'_3)\in P_{\hat x}\Spin(M) \times_{\Spin(M)} P_{\hat x}\Spin(M) \times_{\Spin(M)} P_{\hat x}\Spin(M)$ be other lifts of $\beta_i$. That is, $\rho(\hat\beta'_i)=\beta_i$, and now the paths  $\hat\beta'_1, \hat\beta'_2, \hat\beta'_3$ have common start and end point in $\Spin(M)$, and  the common start point of the $\hat\beta'_i$ is ${\hat x}$. 
Hence there exist paths $\gamma_i\in P_e\Spin(d)$ such that 
\begin{equation*}
\hat\beta'_i=\hat\beta_i\gamma_i \qquad i=1, 2, 3.
\end{equation*}
\item
Let moreover $\Phi_{12},\Phi_{23}\in\widetilde{L\Spin}(M)$ be lifts of $\hat\beta'_1\cup\hat\beta'_2$ and $\hat\beta'_2 \cup \hat\beta'_3$,  respectively, and set $\Phi_{13} = \lambda(\Phi_{23} \otimes \Phi_{12})$, which lifts $\hat\beta'_1 \cup \hat\beta'_3$.
\end{itemize}
We obtain corresponding $\ast$-isomorphisms $\psi_i : \mathcal{A}_{\beta_i} \to A$, $[\hat\beta'_i, a] \mapsto a$, and unitary intertwiners 
\begin{equation*}
u_{ij}: (\mathcal{S}_{LM})_{\beta_i \cup \beta_j}\to L^2(A), \qquad [\Phi_{ij},\xi] \mapsto \xi,
\end{equation*}
along $\psi_j$ and $\psi_i$, respectively. 
\begin{comment}
Note that  $\psi_i$ is \emph{not} the inverse of the algebra homomorphism $\varphi_i$ defined in \cref{AlgebraIsoOfIso} above.
Instead, we have
\begin{equation}
        \label{eq:alghomcomp}
        (\psi_i \circ \varphi_i)(a)=\psi_i([\hat\beta_i, a])=\psi_i([\hat\beta'_i\cdot\gamma^{-1},a])=\psi_i([\hat\beta'_i,\omega_{\gamma_i^{-1}}(a)])=\omega_{\gamma_i^{-1}}(a)\text{.}
\end{equation}
\end{comment}
On the other side, setting $\gamma_{ij} = \gamma_j \gamma_i^{-1}$, we consider the elements 
\begin{align*}
        p_{ij} :=&~ (\hat\beta'_i \gamma_i^{-1},\hat\beta'_j \gamma_i^{-1},\Phi_{ij}\cdot i(\gamma_i)^{-1},\gamma_j\gamma_i^{-1}) = (\hat\beta_i,\hat\beta_j \gamma_{ij},\Phi_{ij}\cdot i(\gamma_i)^{-1},\gamma_{ij})
\end{align*}
of $P$ over $(\hat\beta'_i\gamma_i^{-1},\hat\beta'_j\gamma_i^{-1}\gamma_{ij}^{-1})=(\hat\beta_i,\hat\beta_j)$. 
By \cref{eq:intN} these define unitary intertwiners
\begin{equation*}
        \tau_{p_{ij}}:\Mod_\Rep(P)_{\hat\beta_i\cup\hat\beta_j} \to L^2(A)_{\omega_{\gamma_{ij}}}, \qquad [p_{ij}, \xi] \mapsto \xi.
\end{equation*} 
These make the diagram
\begin{equation}
\label{RightMostDiagram}
                \alxydim{@C=6em}{
              \Mod_{\Rep}(P)_{\hat\beta_i\cup\hat\beta_j} \ar[r]^-{\tau_{p_{ij}}}  \ar[d]_{u_{\hat\beta_i, \hat\beta_j}} &  L^2(A)_{\omega_{\gamma_{ij}}} \\
              (\mathcal{S}_{LM})_{\beta_i\cup\beta_j} \ar[r]_-{u_{ij}}   & L^2(A) \ar[u]_{L^2(\omega_{\gamma_j})} 
                }
\end{equation}
commute as follows from the calculation
\begin{align*}
(u_{ij}\circ u)([p_{ij},\xi])
&= (u_{ij}\circ u)\bigl(\bigl[\hat\beta_i,\hat\beta_j\gamma_{ij},\Phi_{ij}\cdot i(\gamma_i)^{-1},\gamma_{ij}, \xi\bigr]\bigr)
\\&= u_{ij}\bigl(\bigl[\Phi_{ij}\cdot i(\gamma_i)^{-1}, L^2(\omega_{\gamma_{ij}})^* \xi\bigr]\bigr)
\\&= u_{ij}\bigl(\bigl[\Phi_{ij} , L^2(\omega_{\gamma_i})^*L^2(\omega_{\gamma_{ij}})^* \xi\bigr]\bigr)
\\&= u_{ij}\bigl(\bigl[\Phi_{ij} ,L^2(\omega_{\gamma_j})^{*}\xi\bigr]\bigr)
\\&= L^2(\omega_{\gamma_j})^{*}\xi  
\\&= L^2(\omega_{\gamma_j})^*\tau_{p_{ij}}([p_{ij}, \xi]).
\end{align*}
Now we consider the diagram
        \begin{equation*}
                \alxydim{@C=1.8em}{
                \Mod_\Rep(P)_{\hat\beta_2\cup\hat\beta_3} \boxtimes \Mod_\Rep(P)_{\hat\beta_1\cup\hat\beta_2} \ar[dr]_{\tau_{p_{23}}\boxtimes \tau_{p_{12}}} \ar[r]\ar[ddd]_{u_{\hat\beta_2 \hat\beta_3} \boxtimes u_{\hat\beta_1 \hat\beta_2}} & 
                \Mod_\Rep(P_{\hat\beta_2\cup\hat\beta_3} \otimes P_{\hat\beta_1\cup\hat\beta_2}) \ar[dr]_{\tau_{p_{23} \otimes p_{12}}}\ar[rr]^{\Mod_\Rep(\mu)} && 
                                \Mod_\Rep(P)_{\hat\beta_1\cup\hat\beta_3} \ar[dl]^{\tau_{p_{13}}} \ar[ddd]^{u_{\hat\beta_1\hat\beta_3}} \\ 
                & \mquad L^2(A)_{\omega_{\gamma_{23}}}\boxtimes L^2(A)_{\omega_{\gamma_{12}}}  \ar[r]_-{\chi_{\omega_{\gamma_{23}}}} & L^2(A)_{\omega_{\gamma_{13}}}  \\ 
& \mquad L^2(A) \boxtimes L^2(A) \ar[u]^{L^2(\omega_{\gamma_3})\boxtimes L^2(\omega_{\gamma_2})} \ar[r]_-{\chi} & L^2(A)\ar[u]_{L^2(\omega_{\gamma_3})} \\ 
(\mathcal{S}_{LM})_{\beta_2\cup\beta_3} \boxtimes (\mathcal{S}_{LM})_{\beta_1\cup\beta_2} \ar[ur]^-{u_{23} \boxtimes u_{12}} \ar[rrr]_-{\Upsilon_{\beta_1\beta_2\beta_3}} &&& \ar[ul]_{u_{13}} (\mathcal{S}_{LM})_{\beta_1\cup\beta_3}.}
        \end{equation*}
 The four-sided diagram at the bottom commutes by definition.
The central square is a special case of the commutative diagram \cref{Naturality}.
\begin{comment}
Set $\omega_1 = \omega_2 = \id$, $\omega_1^\prime = \omega_{\gamma_{23}}$, $\omega_2^\prime = \omega_{\gamma_{12}}$, $\varphi = \omega_{\gamma_2}$, $u_1 = L^2(\omega_{\gamma_3})$, $u_2 = L^2(\omega_{\gamma_2})$.
Then
\begin{align*}
  u_1 L^2(\omega_1) L^2(\varphi)^* u_2 L^2(\omega_1)^*
  &= L^2(\omega_{\gamma_3}) L^2(\omega_{\gamma_2})^* L^2(\omega_{\gamma_2}) = L^2(\gamma_3).
\end{align*}
\end{comment}
 The four-sided diagrams on the right and on the left are copies of \cref{RightMostDiagram}; for commutativity of the left one, we also use functoriality of Connes fusion.
 The top left diagram is a copy of the diagram \cref{DefiningDiagramMonoidalityMorphism}, which is commutative by monoidality of $\Mod_\Rep$.
 The triangle on the top right is a copy of the commutative diagram \cref{FunctorModProperty}.
 Hence the whole diagram is commutative; this shows the claim.
\end{proof}

\begin{appendix}

\setsecnumdepth{1}

\section{2-group bundles and non-abelian bundle gerbes}

\label{sec:convenientgeometry}
\label{sec:principalbundles}

If $G$ is a topological group, we denote by $\Bdl G$ the stack of principal $G$-bundles over the site $\top$ of topological spaces. The Grothendieck topology on $\top$ is the one generated by locally split maps, i.e., by maps $\pi:Y \to X$ such that each $x\in X$ has an open neighborhood $U\subset X$ with a section $U \to Y$. This Grothendieck topology coincides with the one generated by open covers. We recall that a continuous group homomorphism $f:G \to H$ induces a morphism
\begin{equation}
\label{eq:bundleextension}
f_{*}:\Bdl G \to \Bdl H
\end{equation}
of stacks, called \emph{bundle extension}. In short, $f_{*}(P) := (P \times H)/G$, where $G$ acts on $P\times H$ by $(p,h)\cdot g := (pg,f(g)^{-1}h)$. 
\begin{comment}
We briefly describe it. If $P$ is a principal $G$-bundle over $M$, then we define a morphism $P \times H\times G \to P\times H$ in $\mathcal{C}$ by $(p,h)\cdot g :=(pg,f(g)^{-1}h)$. This is in fact a right action. We then define $f_{*}P := (P\times H)/G$ as the quotient. For $\mathcal{C}=\diff,\top,\deltatop$, such quotients always exist and carry the quotient diffeology/topology. For $\mathcal{C}=\man$, one checks separately that the quotient has a unique smooth manifold structure such that the projection $P\times H \to f_{*}(P)$ is a submersion. In each case, $f_{*}(P) \times H \to f_{*}(P)$, defined by $[p,h] \cdot h' := [p,hh']$, is a morphism in $\mathcal{C}$ and a right action of $H$ on $f_{*}(P)$. Moreover, $f_{*}(P) \to M$, defined by $[p,h] \mapsto \pi(p)$ is a morphism in $\mathcal{C}$ and invariant for the right $H$-action. It is then straightforward to prove that $f_{*}(P)$ is a principal $H$-bundle over $M$.
\end{comment}

Next we upgrade from principal bundles for ordinary groups  to principal bundles for 2-groups. We emphasize that these are \emph{not} categorified principal bundles, instead, they are ordinary bundles for categorified groups.

\begin{definition}
\label{def:principalgammabundle}
Let $\mathcal{G}$ be a topological strict 2-group, and let $\mathcal{G}_s := \mathrm{ker}(s) \subset \mathcal{G}$ be the subgroup that belongs to the crossed module of $\mathcal{G}$.
A \emph{principal $\mathcal{G}$-bundle} over a topological space $X$ is a principal $\mathcal{G}_s$-bundle $P$ over $X$ together with a $\mathcal{G}_s$-anti-equivariant continuous map  $\phi:P \to \mathcal{G}_0$ called \emph{anchor}. A morphism between principal $\mathcal{G}$-bundles is a principal bundle morphism that preserves the anchors. 
\end{definition}

The anti-equivariance of the anchor means that
\begin{equation}
\label{AnchorPropertyCrossedModuleBundle}
\phi(ph) = t(h)^{-1}\phi(p)
\end{equation}
holds for all $p\in P$ and $h\in \mathcal{G}_s$. 
\begin{comment}
We recall that in \cite{Nikolaus} a principal $\mathcal{G}$-bundle (for $\mathcal{C}=\man$) is a smooth manifold $P$ with projection map $\pi:P \to M$ and an action of $\mathcal{G}$ on $P$ along an anchor map $\phi: P \to \mathcal{G}_0$. Let us denote by $\Bdl{\mathcal{G}}^{\mathrm{Grpd}}$ the category of these bundles.

There is an equivalence $\Bdl{\mathcal{G}}^{\mathrm{Grpd}} \to \Bdl{\mathcal{G}}$ established in \cite[Lemma 2.2.9]{Nikolaus}.  It leaves total space, projection, and anchor untouched and equips $P$ with the $\mathcal{G}_s$-action defined by $ph := p\circ (h,t(h)^{-1}\phi(p))$, where $(h,t(h)^{-1}\phi(p))\in \mathcal{G}_s \times \mathcal{G}_0$ is identified with $\mathcal{G}_1$ via the canonical translation between crossed modules and strict 2-groups given by $\mathcal{G}_s \times \mathcal{G}_0 \to \mathcal{G}_1: (h,g) \mapsto h\cdot \id_g$. That is,
\begin{equation*}
ph=p\circ (h\cdot \id_{t(h)^{-1}\phi(p)})\text{.}
\end{equation*} 
\end{comment}
The main point of principal $\mathcal{G}$-bundles is that their category $\Bdl{\mathcal{G}}(X)$ is monoidal, in contrast to the category $\Bdl{\mathcal{G}_s}(X)$ of ordinary principal $\mathcal{G}_s$-bundles.    We recall this now. 
If $P_1$ and $P_2$ are principal ${\mathcal{G}}$-bundles over $X$ with anchors $\phi_1$ and $\phi_2$, respectively, then  the fibre product $P_1 \times_X P_2$ carries a left ${\mathcal{G}}_s$-action defined
by 
\begin{equation}
\label{eq:MonoidalStructureQuotient}
h\cdot (p_1,p_2) := (p_1\cdot h^{-1},p_2\cdot \alpha(\phi_1(p_1)^{-1},h)))\text{,}
\end{equation}
where $\alpha$ is the action of $g\in \mathcal{G}_0$ on $\mathcal{G}_s$  in the crossed module of $\mathcal{G}$, i.e., it is conjugation by $i(g)$.
\begin{comment}
We remark that this gives the same equivalence relation as in \cite[Lemma 2.4.12]{Nikolaus}, where
\begin{equation*}
(p_1\cdot h,p_2) \sim (p_1,p_2\cdot \lw{\phi_1(p_1)^{-1}}h))\text{.}
\end{equation*}
\end{comment}
We define $P_1 \otimes P_2$ as the quotient by this action and denote the equivalence classes by $p_1 \otimes p_2$. 
The quotient $P_1 \otimes P_2$ comes  equipped with the right ${\mathcal{G}}_s$-action $(p_1 \otimes p_2) \cdot h := p_1 h \otimes p_2$, the obvious projection map $P_1 \otimes P_2 \to X$, and the anchor  
\begin{equation} \label{DefinitionAnchorTensorProduct}
\phi_1 \otimes \phi_2 : p_1\otimes p_2 \mapsto \phi_1(p_1)\phi_2(p_2).
\end{equation} 
That this construction result in a principal ${\mathcal{G}}$-bundle is shown in \cite[\S 2.4]{Nikolaus}.

It is clear that one can pull back principal ${\mathcal{G}}$-bundles along  continuous maps, and that the tensor product is compatible with such pullbacks. It is then straightforward to show that principal ${\mathcal{G}}$-bundles form a monoidal stack $\Bdl{\mathcal{G}}$ over the site $\top$ (w.r.t. to open covers).

Suppose $F: {\mathcal{G}} \to {\mathcal{H}}$ is a continuous homomorphism of topological strict 2-groups. Let $f:{\mathcal{G}}_s \to {\mathcal{H}}_s$ be the restriction, which is a continuous group homomorphism. 
Using bundle extension \cref{eq:bundleextension} a principal ${\mathcal{G}}$-bundle $P$ with anchor $\phi$ becomes a principal ${\mathcal{H}}$-bundle $f_{*}(P)$ with anchor $[p,h]\mapsto t(h)^{-1}F(\phi(p))$. 
\begin{comment}
This anchor is well-defined:
\begin{equation*}
[p,h]=[pg,f(g)^{-1}h] \mapsto t(f(g)^{-1}h)^{-1}F(\phi(pg))= t(h)^{-1}t(f(g))F(t(g))^{-1}F(\phi(p))=t(h)^{-1}F(\phi(p))^{-1} \text{.} 
\end{equation*}
It is also anti-equivariant:
\begin{equation*}
t(hh')^{-1}F(\phi(p))=t(h')^{-1}t(h)^{-1}F(\phi(p))\text{.}
\end{equation*}
\end{comment}
This defines a morphism of stacks
\begin{equation}
\label{eq:Principal2GroupBundleExtension}
F_{*}: \Bdl{\mathcal{G}} \to \Bdl{\mathcal{H}}\text{.}
\end{equation}

\begin{lemma}
\label{lem:bundleextension}
The bundle extension $F_{*}: \Bdl{\mathcal{G}} \to \Bdl{\mathcal{H}}$ is a monoidal functor. 
\end{lemma}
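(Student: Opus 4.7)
My plan is to construct explicit natural coherence isomorphisms $\eta_{P_1,P_2}: F_{*}(P_1) \otimes F_{*}(P_2) \to F_{*}(P_1 \otimes P_2)$ and $\eta_0: \underline{\mathcal{H}_s} \to F_{*}(\underline{\mathcal{G}_s})$, then verify that they satisfy naturality together with the pentagon and triangle coherence axioms of a monoidal functor. The main work sits in finding the correct formula for $\eta_{P_1,P_2}$ and checking its well-definedness; the coherence axioms will then be verifiable by essentially formal manipulations.

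To find the formula I would first observe how the two anchors must match. The anchor of $F_{*}(P_1 \otimes P_2)$ at $[p_1\otimes p_2, h]$ equals $t(h)^{-1}F(\phi_1(p_1))F(\phi_2(p_2))$, while the anchor of $F_{*}(P_1)\otimes F_{*}(P_2)$ at $[p_1,h_1]\otimes [p_2,h_2]$ is $t(h_1)^{-1}F(\phi_1(p_1))\cdot t(h_2)^{-1}F(\phi_2(p_2))$. Using the first Peiffer identity $t(\alpha(g,h))=gt(h)g^{-1}$ from \cref{CrossedModuleActions}, one rewrites the second expression as $t(h_1 \cdot \alpha(F(\phi_1(p_1)),h_2))^{-1}F(\phi_1(p_1))F(\phi_2(p_2))$. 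This motivates the definition
\begin{equation*}
\eta_{P_1,P_2}\bigl([p_1,h_1]\otimes [p_2,h_2]\bigr) := \bigl[p_1 \otimes p_2,\; h_1 \cdot \alpha(F(\phi_1(p_1)),h_2)\bigr].
\end{equation*}

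I would then check well-definedness on the three quotients involved: the two $\mathcal{G}_s$-actions defining $F_{*}(P_1)$ and $F_{*}(P_2)$, the $\mathcal{H}_s$-action \cref{eq:MonoidalStructureQuotient} defining $F_{*}(P_1)\otimes F_{*}(P_2)$, and finally the $\mathcal{G}_s$-action yielding $P_1\otimes P_2$ inside $F_{*}(P_1\otimes P_2)$. Each of these identities will reduce to an application of one of the two crossed-module axioms \cref{CrossedModuleActions}, together with the fact that $F$ is a 2-group homomorphism (so $F(\alpha(g,h))=\alpha(F(g),f(h))$). The compatibility with the right $\mathcal{H}_s$-action and the bundle projection is immediate from the formula, and anchor compatibility holds by construction of the formula.

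That $\eta_{P_1,P_2}$ is a homeomorphism can be seen by writing down its inverse $[p_1\otimes p_2, h]\mapsto [p_1,1]\otimes [p_2, \alpha(F(\phi_1(p_1))^{-1},h)]$, which is continuous after descending to the quotients; alternatively, since both sides are principal $\mathcal{H}_s$-bundles over $X$, any anchor-preserving, equivariant, continuous map between them is automatically an isomorphism. Naturality in morphisms $P_i\to P_i'$ of principal $\mathcal{G}$-bundles is obvious from the formula. For the unit, the trivial $\mathcal{H}$-bundle $\underline{\mathcal{H}_s} = X \times \mathcal{H}_s$ with constant anchor $e$ is sent to $F_{*}(\underline{\mathcal{G}_s})=(X\times\mathcal{G}_s\times\mathcal{H}_s)/\mathcal{G}_s$ by $(x,h)\mapsto [x,1,h]$, which is easily seen to be an isomorphism; anchor and equivariance again match on the nose.

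The hardest part will in principle be verifying the pentagon axiom, but because $\eta_{P_1,P_2}$ is given by a completely explicit formula and the only nontrivial piece is the $\alpha(F(\phi_1(p_1)),-)$ twist, the two bracketings of $F_{*}(P_1)\otimes F_{*}(P_2)\otimes F_{*}(P_3)$ will both be carried to $[p_1\otimes p_2\otimes p_3,\, h_1\cdot \alpha(F(\phi_1(p_1)), h_2)\cdot \alpha(F(\phi_1(p_1)\phi_2(p_2)), h_3)]$ in $F_{*}(P_1\otimes P_2\otimes P_3)$, using that $\alpha$ is an action of $\mathcal{H}_0$ on $\mathcal{H}_s$ and the multiplicativity of the anchor on tensor products; the triangle axiom with the unitors $\eta_0$ is a direct computation of the same flavor. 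This completes the verification that $F_{*}$ is monoidal.
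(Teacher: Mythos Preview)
Your approach is essentially the same as the paper's: construct an explicit coherence morphism $F_{*}(P_1)\otimes F_{*}(P_2)\to F_{*}(P_1\otimes P_2)$ and verify well-definedness, anchor-preservation, and $\mathcal{H}_s$-equivariance. The paper's proof is terser (it only states the formula and asserts the checks are straightforward), whereas you also outline the unit morphism and coherence axioms.

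However, your formula has the two factors in the wrong order. With $h = h_1\cdot\alpha(F(\phi_1(p_1)),h_2)$ you get
\[
t(h)^{-1}=t\bigl(\alpha(F(\phi_1(p_1)),h_2)\bigr)^{-1}t(h_1)^{-1}
= F(\phi_1(p_1))\,t(h_2)^{-1}F(\phi_1(p_1))^{-1}\,t(h_1)^{-1},
\]
which does not equal the required $t(h_1)^{-1}F(\phi_1(p_1))t(h_2)^{-1}F(\phi_1(p_1))^{-1}$; so anchor-preservation fails. More fatally, $\mathcal{H}_s$-equivariance fails too: acting by $h$ on the source replaces $h_1$ by $h_1h$, so your formula produces $h_1h\cdot\alpha(F(\phi_1(p_1)),h_2)$, whereas acting on the target gives $h_1\cdot\alpha(F(\phi_1(p_1)),h_2)\cdot h$. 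The correct formula (which the paper uses) is
\[
[p_1,h_1]\otimes[p_2,h_2]\;\longmapsto\;\bigl[p_1\otimes p_2,\;\alpha(F(\phi_1(p_1)),h_2)\cdot h_1\bigr],
\]
placing $h_1$ on the right so that the right $\mathcal{H}_s$-action passes through. With this correction the rest of your plan goes through as you describe.
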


\begin{proof}
We provide a bundle morphism $\psi_{P_1,P_2}:F_{*}(P_1) \otimes F_{*}(P_2) \to F_{*}(P_1 \otimes P_2)$. 
Let us first describe both sides. An element in $F_{*}(P_1) \otimes F_{*}(P_2)$ is represented by an element $((p_1,h_1) , (p_2,h_2))$. 
\begin{comment}
The anchor of such an element is 
\begin{equation*}
t(h_1)^{-1}F(\phi_1(p_1))t(h_2)^{-1}F(\phi_2(p_2))\text{,}
\end{equation*}
and the left ${\mathcal{H}}_s$-action is 
\begin{equation*}
((p_1,h_1),(p_2,h_2))\cdot h = ((p_1,h_1h),(p_2,h_2))\text{.}   
\end{equation*}
\end{comment}
An element in $F_{*}(P_1 \otimes P_2)$ is represented by a pair $(p_1,p_2),h)$. 
\begin{comment}
The anchor of such an element is
\begin{equation*}
t(h)^{-1}F(\phi_1(p_1)\phi_2(p_2))\text{,}
\end{equation*} 
and the left ${\mathcal{H}}_s$-action is 
\begin{equation*}
((p_1,p_2),h) \cdot h' = ((p_1,p_2),hh')\text{.} 
\end{equation*}
\end{comment}
We define $\psi_{P_1,P_2}$ by
\begin{equation*}
\psi_{P_1,P_2}((p_1,h_1),(p_2,h_2)) := ((p_1,p_2),\alpha(F(\phi_1(p_1)),h_2)h_1)\text{.}
\end{equation*}
It is straightforward to show that this preserves anchors and the ${\mathcal{H}}_s$-action, and a bit tedious but still straightforward to prove that $\psi_{P_1,P_2}$ is well-defined under the two layers of equivalence relations present on $F_{*}(P_1) \otimes F_{*}(P_2)$.
\end{proof}

The monoidal structure of 2-group bundles is the key ingredient for the definition of non-abelian gerbes. The following definition is \cite[\S 5]{Nikolaus}.

\begin{definition}%[${\mathcal{G}}$-bundle gerbe]
\label{def:bundlegerbe}
Let $X$ be a topological space, and let ${\mathcal{G}}$ be a topological strict 2-group. A \emph{${\mathcal{G}}$-bundle gerbe} $\mathcal{Q}$ over $X$ consists of the following structure:
\begin{enumerate}

\item 
a topological space $Y$ together with a locally split map $\pi:Y \to X$.

\item
a principal ${\mathcal{G}}$-bundle $P$ over the double fibre product $Y^{[2]}$, in the sense of \cref{def:principalgammabundle}. 

\item
a bundle morphism $\mu: \pr_{23}^{*}P \otimes \pr_{12}^{*}P \to \pr_{13}^{*}P$ over $Y^{[3]}$, called the \emph{bundle gerbe product} of $\mathcal{G}$.

\end{enumerate}
It is required that the usual associativity condition for bundle gerbe products over $Y^{[4]}$ is satisfied. 
\end{definition}

\cref{lem:bundleextension} implies the following.

\begin{corollary}
\label{InducedOperationsOnBundleGerbes}
\label{InducedOperationsOnBundleGerbes:Extension} 
Suppose $F:{\mathcal{G}} \to {\mathcal{H}}$ is a continuous  homomorphism between topological strict 2-groups, and $\mathcal{Q}=(Y,\pi,P,\mu)$ is a ${\mathcal{G}}$-bundle gerbe over $X$.
Then, $F_{*}(\mathcal{Q}) := (Y,\pi,F_{*}(P),F_{*}(\mu))$ is a ${\mathcal{H}}$-bundle gerbe over $X$.
\end{corollary}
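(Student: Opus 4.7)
The plan is to verify in turn the three pieces of data required by \cref{def:bundlegerbe}, together with the associativity axiom, each time leveraging \cref{lem:bundleextension} and compatibility with pullbacks.

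First I would observe that the locally split surjection $\pi : Y \to X$ is unchanged, so nothing has to be checked at this level. Next, since $F_* : \Bdl{\mathcal{G}} \to \Bdl{\mathcal{H}}$ is a morphism of stacks (in particular a functor of presheaves), applying it to the principal ${\mathcal{G}}$-bundle $P$ over $Y^{[2]}$ yields a principal ${\mathcal{H}}$-bundle $F_*(P)$ over $Y^{[2]}$ with anchor $[p,h] \mapsto t(h)^{-1}F(\phi(p))$, exactly as described before \cref{eq:Principal2GroupBundleExtension}. This takes care of data~(ii) of \cref{def:bundlegerbe}.

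For data~(iii), I would define the bundle gerbe product of $F_*(\mathcal{Q})$ as the composition
\begin{equation*}
\alxydim{@C=4em}{\pr_{23}^{*}F_*(P) \otimes \pr_{12}^{*}F_*(P) \ar[r]^-{\cong} & F_*(\pr_{23}^{*}P) \otimes F_*(\pr_{12}^{*}P) \ar[d]^{\psi_{\pr_{23}^*P,\pr_{12}^*P}} \\  & F_*(\pr_{23}^{*}P \otimes \pr_{12}^{*}P) \ar[r]^-{F_*(\mu)} & F_*(\pr_{13}^{*}P) \ar[r]^-{\cong} & \pr_{13}^{*}F_*(P),}
\end{equation*}
where the first and last arrows are the canonical isomorphisms expressing that $F_*$ is compatible with pullbacks, $\psi$ is the monoidal structure isomorphism from \cref{lem:bundleextension}, and $F_*(\mu)$ is the functorial image of $\mu$. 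By slight abuse of notation, we denote this composite again by $F_*(\mu)$.

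The only nontrivial remaining step is associativity over $Y^{[4]}$. The approach here is routine diagram chasing: the associativity pentagon for $F_*(\mu)$ decomposes into (a) the associator of the monoidal stack $\Bdl{\mathcal{H}}$ applied to the three pullbacks of $F_*(P)$, (b) the coherence hexagon expressing that $\psi$ is a monoidal structure (i.e.\ compatibility of $\psi$ with associators on both sides), (c) the functoriality of $F_*$ applied to the associativity diagram of $\mu$ itself, and (d) the compatibility of pullback isomorphisms with associators and with $\psi$. Piece~(c) is automatic because $\mu$ satisfies the associativity axiom for $\mathcal{Q}$; pieces~(a), (b), (d) hold by general properties of monoidal stacks and by the coherence built into \cref{lem:bundleextension}. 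I expect the only mildly delicate point to be carefully tracking the pullback isomorphisms past the monoidal structure $\psi$, but this is a formal consequence of the fact that $F_*$ is a morphism of monoidal stacks (i.e.\ its monoidality is natural in the base space).
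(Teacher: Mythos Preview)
Your proof is correct and follows the same approach as the paper. In fact, the paper gives no proof at all for this corollary: it is stated as an immediate consequence of \cref{lem:bundleextension} (``\cref{lem:bundleextension} implies the following''), and your argument simply makes explicit the construction of $F_*(\mu)$ via the monoidal structure $\psi$ and the routine coherence chase for associativity that the paper leaves to the reader.
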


\begin{remark}
\label{DiffeologicalPlusConstruction}
One may adapt Nikolaus-Schweigert's plus construction \cite{nikolaus2} to topological spaces.
This  exhibits above definition of a bundle gerbe as the objects of a bicategory
\begin{equation*}
\Grb_{\mathcal{G}} (X) = \mathscr{B}(\Bdl{\mathcal{G}})^{+}(X)\text{,}
\end{equation*}
where $\mathscr{B}\mathscr{C}$ denotes -- when $\mathscr{C}$ is a monoidal category -- the corresponding bicategory with a single object. 

\end{remark}

\setsecnumdepth 1

\section{Four equivalent versions of string structures}

\label{sec:strstr}

In the differential-geometric setting of the present article, there  are four equivalent versions of the notion of a string structure on a spin manifold $M$:
\begin{enumerate}[(1)]

        \item 
                A trivialization of the Chern-Simons 2-gerbe over $M$, see \cite{waldorf8}.

        \item
                A thin fusive loop-spin structure, see \cite{Waldorfa}.

        \item
                A lift of the spin frame bundle $\Spin(M)$ of $M$ to a principal $\String(d)$-2-bundle. 
                
                \item
                A lift of the spin frame bundle $\Spin(M)$ of $M$ to a $\String(d)$-bundle gerbe, see \cite{stevenson3,jurco1}.

\end{enumerate}
Versions (3) and (4) involve models of the string 2-group. 

The equivalence between versions (1) and (4) has been established in \cite[Theorem 7.9]{Nikolausa}. 
The equivalence between (1) and (2) has been established in \cite[Theorem A]{Waldorfb}. 
The equivalence between (3) and (4) comes from the  general equivalence between principal 2-bundles and bundle gerbes \cite[Section 7.1]{Nikolaus}.
In this section we work out explicitly the passage from version (2) to version (4), which is induced by the above mentioned equivalences. We need this explicit description because the stringor bundle of Stolz-Teichner (see \cref{sec:stringor}) is defined using version (2) while the associated 2-Hilbert bundle (see \cref{SubsectionStringStructuresFusiveLoopSpin}) is defined using version (4).

We suppose that we have a string structure in version (2), i.e., a fusive loop-spin structure on the spin manifold $M$, as defined in \cite[Definition 3.6]{Waldorfa} and recalled above in \cref{DefinitionFusiveSpinStructure}.
The passage to version (4) most naturally factors through version (3), so we shall first recall that setting. 

 Let $\String(d)$ be the smooth string 2-group of \cref{SectionStringorRep}.  A \emph{principal $\String(d)$-2-bundle} over $M$ consists of a (Fr\'echet) Lie groupoid $\mathcal{P}$, a smooth functor $\pi : \mathcal{P} \to M_{dis}$ that is a submersion on the level of objects, and a smooth right action $R:\mathcal{P} \times \String(d) \to \mathcal{P}$ that preserves $\pi$, such that the smooth
        functor
        \begin{equation*}
                (\mathrm{pr}_\mathcal{P}, R) : \mathcal{P} \times \String(d) \to  \mathcal{P} \times_M \mathcal{P}
        \end{equation*}
        is a weak equivalence, see \cite[Def. 6.1.1]{Nikolaus}. Here, by smooth right action we mean a smooth functor that strictly satisfies the axioms of a right action, and by weak equivalence we mean a smooth functor that is invertible by a smooth anafunctor, or bibundle. 
\begin{comment}
\begin{remark}
        Principal ${\mathcal{G}}$-2-bundles must not be confused with principal ${\mathcal{G}}$-bundles, as in \cref{def:principalgammabundle}. 
        The total space of a principal ${\mathcal{G}}$-bundle is a manifold.
        The total space of a principal ${\mathcal{G}}$-2-bundle is a groupoid on which ${\mathcal{G}}$ acts in the sense of 2-groups.
\end{remark}
\end{comment}
Now we are in position to explain   version (3) of a string structure on $M$.

\begin{definition}
\label{def:ssv3}
A \emph{lift of the spin frame bundle  $\Spin(M)$ of $M$ to $\String(d)$} is a principal $\String(d)$-2-bundle $\mathcal{P}$ over $M$ together with a smooth functor $\mathcal{P} \to \Spin(M)_{dis}$ that respects the projections to $M_{dis}$ and is strictly equivariant along the projection $\String(d) \to \Spin(d)_{dis}$. 
\end{definition}

Next we explain how to construct a lift of $\Spin(M)$ to $\String(d)$ from a fusive loop-spin structure. This construction is new and in fact simple and straightforward.
We start with  the total space, the Lie groupoid $\mathcal{P}$.
We need to choose a base point $x \in M$ and a lift ${\hat x}\in \Spin(M)$. 
We assume throughout that $M$ is connected; otherwise, our procedure can be applied to each connected component separately.
\begin{itemize}
        \item 
                The Fr\'echet manifold of objects of $\mathcal{P}$ is 
                        $\mathcal{P}_0 := P_{\hat x}\Spin(M)$, the paths in $\Spin(M)$ that start at ${\hat x}$ and are flat at the endpoints. 

       \item
                The Fr\'echet manifold of morphisms of $\mathcal{P}$ is 
                \begin{equation*}
                        \mathcal{P}_1 := P_{\hat x}\Spin(M)^{[2]} \times_{L\Spin(M)} \widetilde{L\Spin}(M)\text{.}
                \end{equation*}
                Here, the fibre product $P_{\hat x}\Spin(M)^{[2]}$ is the fibre product over $\Spin(M)$, and so contains pairs of paths in $\Spin(M)$ with starting point ${\hat x}$ and the same end point, and the map to $L\Spin(M)$ is the map $\cup$ defined in \cref{CupMap}.  
                \begin{comment}
                We may identify this with the subspace of $\widetilde{L\Spin}(M)$ of all elements over those loops that are based at ${\hat x}$ and have sitting instants at $0$ and $\pi$. 
                \end{comment}
                In total, $\mathcal{P}_1$ consists of elements $(\beta_1, \beta_2, \Phi)$, where $\beta_1, \beta_2 \in P_{\hat x} \Spin(M)$, $\beta_1(\pi) = \beta_2(\pi)$ and $\Phi$ is a lift of $\beta_1 \cup \beta_2 \in L\Spin(M)$ to $\widetilde{L \Spin}(M)$.
                \begin{comment}
                Clearly, the information on $\beta_1$ and $\beta_2$ is redundant, but we choose to keep it around for convenience.
\end{comment}
        \item
                Source and target maps are $s(\beta_1,\beta_2,\Phi) := \beta_2$ and $t(\beta_1,\beta_2,\Phi) := \beta_1$. 
        \item
                Composition is the fusion product $\lambda$, see \cref{DefinitionFusiveSpinStructure}. More precisely, 
                \begin{equation*}
                        (\beta_3,\beta_2,\Phi_{23}) \circ (\beta_2,\beta_1,\Phi_{12}) := (\beta_3,\beta_1,\lambda(\Phi_{12} \otimes \Phi_{23}))\text{.}
                \end{equation*}
                \begin{comment}
Check this carefully: $\Phi_{12}$ projects to $\beta_2 \cup \beta_1$ and $\Phi_{23}$ projects to $\beta_3 \cup\beta_2$. Thus, considering the triple $(\beta_3,\beta_2,\beta_1)$, fusion is a map $\lambda_{\beta_3,\beta_2,\beta_1}: L_{\beta_2\cup\beta_1} \otimes L_{\beta_3\cup\beta_2} \to L_{\beta_3\cup\beta_1}$, and hence $\lambda(\Phi_{12} \otimes \Phi_{23})$ makes sense and lands over $\beta_3\cup\beta_1$.  
\end{comment}
                %
        \item
                Identity morphisms are induced from the fusion product $\lambda$: $\id_\beta$ is the unique element such that $\lambda_{\beta,\beta,\beta}(\id_\beta \otimes \id_\beta)=\id_\beta$; see \cite[Prop. 3.1.1]{Waldorfc}.
\end{itemize}  
The bundle projection is given by $\mathcal{P}_0 \to M: \beta\mapsto \varpi(\beta(\pi))$ where $\varpi:\Spin(M) \to M$ is the bundle projection. 
This is a surjective submersion and extends to a smooth functor $\mathcal{P} \to M_{dis}$. 
Next, we define the principal action $R: \mathcal{P} \times \String(d) \to \mathcal{P}$.
On the level of objects, we put 
\begin{equation*}
        R_0(\beta,\gamma) := \beta\gamma.
\end{equation*}
This uses  pointwise the action of $\Spin(d)$ on $\Spin(M)$, keeping in mind the fact that the objects of $\mathcal{P}$ are paths in $\Spin(M)$ starting at ${\hat x}$, while the objects of $\String(d)$ are paths in $\Spin(d)$, starting at the neutral element.
On the level of morphisms, we put 
\begin{equation}
\label{2-principal-action}
        R_1((\beta_1,\beta_2,\Phi),(\gamma_1,\gamma_2,X)) := (\beta_1\gamma_1,\beta_2\gamma_2,\Phi X),
\end{equation}
using the principal action of $\widetilde{L\Spin}(d)$ on $\widetilde{L\Spin}(M)$. 
This clearly preserves source and target, and it respects the composition precisely due to \cref{eq:FusionIsEquivariant}.
\begin{comment}
        One has to use \cref{composition-in-string-2-group} to check this. Indeed,
        \begin{align*}
                \bigl[ (\beta_3, \beta_2, \Phi) \circ (\beta_2, \beta_1, \Psi) \bigr] \cdot \bigl[ (\gamma_3, \gamma_2, U) \circ (\gamma_2, \gamma_1, V) \bigr] &= (\beta_3, \beta_1, \lambda(\Psi \otimes \Phi)) \cdot (\gamma_3, \gamma_1, \mu(V \otimes U))\\
                                                                                                                                                                &= (\beta_3\gamma_3, \beta_1\gamma_1, \lambda(\Psi \otimes \Phi) \cdot \mu(V \otimes U))\\
                                                                                                                                                                &= (\beta_3 \gamma_3, \beta_1 \gamma_1, \lambda(\Psi V \otimes \Phi U))\\
                                                                                                                                                                &= (\beta_3\gamma_3, \beta_2\gamma_2, \Phi U) \circ (\beta_2\gamma_2, \beta_1\gamma_1, \Psi V)\\
                                                                                                                                                                &= [(\beta_3, \beta_2, \Phi) \cdot (\gamma_3, \gamma_2, U)] \circ [(\beta_2, \beta_1, \Psi) \cdot (\gamma_2, \gamma_1, V)]
        \end{align*}
\end{comment}
Thus, we have defined a smooth functor, which obviously preserves the bundle projection and is a strict right action. It remains to check the following.

\begin{lemma}
        The functor $\tilde R:=(\pr_{\mathcal{P}},R): \mathcal{P} \times \String(d) \to \mathcal{P} \times_M \mathcal{P}$ is a weak equivalence.
\end{lemma}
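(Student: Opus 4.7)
The plan is to verify the two conditions characterizing a weak equivalence of Fr\'echet Lie groupoids for $\tilde R$: smooth fully faithfulness (the source--target square is Cartesian as Fr\'echet manifolds) and smooth essential surjectivity (the map
\begin{equation*}
t \colon (\mathcal{P} \times_M \mathcal{P})_1 \ttimes{s}{\tilde R_0} (\mathcal{P}_0 \times \String(d)_0) \to (\mathcal{P}\times_M\mathcal{P})_0
\end{equation*}
is locally split).

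First I would establish fully faithfulness. A point of the relevant pullback consists of $(\beta,\gamma), (\tilde\beta,\tilde\gamma)\in \mathcal{P}_0\times\String(d)_0$ together with a pair of $\mathcal{P}$-morphisms $(\tilde\beta,\beta,\Phi_1)$ from $\beta$ to $\tilde\beta$ and $(\tilde\beta\tilde\gamma,\beta\gamma,\Phi_2)$ from $\beta\gamma$ to $\tilde\beta\tilde\gamma$ (automatically lying over the same point of $M$). Any preimage in $\mathcal{P}_1 \times \String(d)_1$ must have the shape $((\tilde\beta,\beta,\Phi_1), (\tilde\gamma,\gamma,X_0))$ with $X_0 \in \widetilde{L\Spin}(d)$ over $\tilde\gamma\cup\gamma$, whose $\tilde R_1$-image is forced by \cref{2-principal-action} to have second component $(\tilde\beta\tilde\gamma,\beta\gamma,\Phi_1 X_0)$. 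Matching with $\Phi_2$ gives $X_0 = \Phi_1^{-1}\Phi_2$, which exists uniquely and depends smoothly on the data because $\widetilde{L\Spin}(M) \to LM$ is a principal $\widetilde{L\Spin}(d)$-bundle. Hence the square is Cartesian.

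Next I would handle essential surjectivity. Given $(\beta_1,\beta_2) \in \mathcal{P}_0 \times_M \mathcal{P}_0$, the task is to construct, locally in $(\beta_1,\beta_2)$, an object $(\beta,\gamma) \in \mathcal{P}_0 \times \String(d)_0$ together with a pair of $\mathcal{P}$-morphisms $f_i$ from $\beta$, resp.\ $\beta\gamma$, to $\beta_i$. Taking $\beta := \beta_1$ and $f_1 := \id_{\beta_1}$, two ingredients remain to be chosen: a path $\gamma \in P_e\Spin(d)$ with $\gamma(\pi) = \beta_1(\pi)^{-1}\beta_2(\pi)$, and an element $\Phi_2 \in \widetilde{L\Spin}(M)$ lifting the loop $\beta_2 \cup (\beta_1\gamma)$. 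For the first, I would use that the endpoint evaluation $\ev_\pi \colon P_e\Spin(d) \to \Spin(d)$ is a surjective Fr\'echet submersion (exploiting connectedness of $\Spin(d)$) and hence admits smooth local sections, which can be precomposed with the smooth map $(\beta_1,\beta_2) \mapsto \beta_1(\pi)^{-1}\beta_2(\pi)$; for the second, local trivializability of the principal $\widetilde{L\Spin}(d)$-bundle $\widetilde{L\Spin}(M) \to LM$ gives a smooth local lift of the loop $\beta_2 \cup (\beta_1\gamma)$.

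The main technical obstacle is to check that all these local constructions are honest smooth maps of Fr\'echet manifolds and that the assembled map $t$ is not merely surjective but locally split. This hinges on standard submersion properties of based path- and loop-spaces of finite-dimensional Lie groups together with the explicit model of the basic central extension $\widetilde{L\Spin}(d) \to L\Spin(d)$ and the principal bundle $\widetilde{L\Spin}(M) \to LM$, which are available in the references cited in \cref{SectionStringorRep}.
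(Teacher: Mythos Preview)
Your proof is correct and follows essentially the same strategy as the paper: verify the weak equivalence via the smoothly fully faithful and smoothly essentially surjective criterion, using the principal $\widetilde{L\Spin}(d)$-bundle structure of $\widetilde{L\Spin}(M)\to LM$ for the uniqueness/smoothness in the fully faithful part, and connectedness of $\Spin(d)$ together with local sections of the endpoint evaluation and of the bundle $\widetilde{L\Spin}(M)\to LM$ for essential surjectivity. The only cosmetic differences are that you phrase full faithfulness as a pointwise bijection with smooth dependence (the paper writes out the universal cone argument explicitly) and you use the opposite but equivalent orientation of the essential surjectivity map; your shorthand $\beta_1(\pi)^{-1}\beta_2(\pi)$ and $\Phi_1^{-1}\Phi_2$ should of course be read as the unique translating element coming from the principal bundle structure, exactly as in the paper.
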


\begin{proof}
        A well-known criterion to check for a weak equivalence is to check that the functor is \emph{smoothly essentially surjective} and \emph{smoothly fully faithful}.         The first means that the map 
        \begin{equation*}
                (s \times s)\circ \pr_2: (\mathcal{P}_0 \times \String(d)_0) \ttimes{\tilde R_0}{t\times t} (\mathcal{P}_1 \times_M \mathcal{P}_1) \to \mathcal{P}_0 \times_M \mathcal{P}_0
        \end{equation*}
        must be a surjective submersion. We shall  see that it is surjective in the first place, which means precisely that $\tilde R$ is essentially surjective in the classical sense. Given $(\beta_1,\beta_2)\in \mathcal{P}_0 \times_M \mathcal{P}_0$, i.e., $\varpi(\beta_1(\pi))=\varpi(\beta_2(\pi))$, we let $g\in \Spin(d)$ be the unique element such that $\beta_1(\pi)g=\beta_2(\pi)$. Since $\Spin(d)$ is connected, there exists $\gamma\in P_e\Spin(d)$ with $\gamma(\pi)=g$. Thus, $\beta_1\gamma$ and $\beta_2$ have the same initial point and the same end point, and hence yield a loop $\beta_1\gamma \cup \beta_2\in L\Spin(M)$. This loop admits a lift $\Phi\in \widetilde{L\Spin}(M)$, i.e., a morphism in $\mathcal{P}$ from $\beta_2$ to $\beta_1\gamma$. We see that 
        \begin{equation*}
                ((\beta_1,\gamma),(\id_{\beta_1},\Phi))\in (\mathcal{P}_0 \times \String(d)_0) \ttimes{\tilde R}{t\times t} (\mathcal{P}_1 \times_M \mathcal{P}_1)
        \end{equation*}
        is a well-defined preimage of $(\beta_1,\beta_2)$. It is clear that the choices of $\gamma$ and $\Phi$ can be attained in a locally smooth way, which then shows that $\tilde R$ is even \emph{smoothly} essentially surjective. 
       
        That $\tilde{R}$ is smoothly fully faithful means that the diagram
        \begin{equation*}
                \xymatrix@C=5em{\mathcal{P}_1 \times \String(d)_1 \ar[d]_{(s \times s,t \times t)} \ar[r]^-{\tilde R_1} & \mathcal{P}_1 \times_M \mathcal{P}_1\ar[d]^{(s \times s,t \times t)} \\ (\mathcal{P}_0 \times \String(d)_0) \times (\mathcal{P}_0 \times \String(d)_0) \ar[r]_-{\tilde R_0 \times \tilde R_0} &  (\mathcal{P}_0 \times_M \mathcal{P}_0) \times (\mathcal{P}_0 \times_M \mathcal{P}_0)}
        \end{equation*}
        is a pullback diagram. In order to prove this, let us assume that we have a cone, \ie a Fr\'echet manifold $N$ with smooth maps $f$, $g$ such that the diagram
        \begin{equation*}
                \xymatrix@C=5em{N \ar[d]_{g} \ar[r]^-{f} & \mathcal{P}_1 \times_M \mathcal{P}_1\ar[d]^{(s \times s,t \times t)} \\ (\mathcal{P}_0 \times \String(d)_0) \times (\mathcal{P}_0 \times \String(d)_0) \ar[r]_-{\tilde R_0 \times \tilde R_0} &  (\mathcal{P}_0 \times_M \mathcal{P}_0) \times (\mathcal{P}_0 \times_M \mathcal{P}_0)}
        \end{equation*}
        is commutative. We write $g=(\beta_2,\gamma_2,\beta_1,\gamma_1)$ and $f=(\Phi_1,\Phi_2)$. Commutativity then means that for all $x\in N$, $\Phi_1(x)$ is a morphism from $\beta_2(x)$ to $\beta_1(x)$ and $\Phi_2(x)$ is a morphism from $(\beta_2\gamma_2)(x)$ to $(\beta_1\gamma_1)(x)$. In other words, $\Phi_1(x)$ projects to $\beta_1(x)\cup \beta_2(x)\in L\Spin(M)$, and $\Phi_2(x)$ projects to $(\beta_1\gamma_1)(x)\cup(\beta_2\gamma_2)(x)\in L\Spin(M)$. Since both loops in $L\Spin(M)$ project to the same loop in $LM$, and $\widetilde{L\Spin}(M)$ is a principal $\widetilde{L\Spin}(d)$-bundle over $LM$, there exists a unique smooth map $X:N \to \widetilde{L\Spin}(d)$ such that $X(x)$ projects to $\gamma_1(x) \cup \gamma_2(x) \in L\Spin(d)$ and $\Phi_2(x)=\Phi_1(x)X(x)$. 
        This gives a smooth map
        \begin{equation*}
                N \to \mathcal{P}_1 \times \String(d)_1: x \mapsto (\Phi_1(x),X(x))\text{.}
        \end{equation*}
        It is easy to see that it is the unique map rendering the  required diagrams commutative. 
\end{proof}

Now we have constructed a principal $\String(d)$-2-bundle $\mathcal{P}$ over $M$. In order to have a string structure as in \cref{def:ssv3}, we have to show that it lifts $\Spin(M)$. For this purpose, we consider the functor
\begin{equation*}
        P:\mathcal{P} \to \Spin(M)_{dis}
\end{equation*}
given by $\beta \mapsto \beta(\pi)$ on the level of objects. Since morphisms in $\mathcal{P}$ between $\beta_1$ and $\beta_2$ exist only if $\beta_1(\pi)=\beta_2(\pi)$, this extends to a functor to $\Spin(M)_{dis}$. The projection to the base $M$ is clearly preserved. We recall that the projection $\String(d) \to \Spin(d)_{dis}$ is given by $\gamma\mapsto \gamma(\pi)$ on the level of objects. Thus, we see that $P$ is strictly equivariant under this projection. Summarizing, we have the following result.

\begin{proposition}
        Given a fusive loop-spin structure on $LM$, the Fr\'echet Lie groupoid $\mathcal{P}$ together with the action $R$ is a principal $\String(d)$-2-bundle over $M$, and it lifts the structure group of $\Spin(M)$ from $\Spin(d)$ to $\String(d)$.
\end{proposition}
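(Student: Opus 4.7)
The plan is to verify, in order, the three requirements of \cref{def:ssv3}: (i) $\mathcal{P}$ is a Fr\'echet Lie groupoid with a smooth functor $\pi : \mathcal{P} \to M_{dis}$ which is a submersion on objects; (ii) $R : \mathcal{P} \times \String(d) \to \mathcal{P}$ is a smooth strict right action preserving $\pi$, whose associated functor $\tilde R = (\pr_\mathcal{P}, R)$ is a weak equivalence to $\mathcal{P} \times_M \mathcal{P}$; (iii) the functor $P : \mathcal{P} \to \Spin(M)_{dis}$ covers the identity on $M$ and is strictly equivariant along $q : \String(d) \to \Spin(d)_{dis}$ of \cref{projection-to-spin}. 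Most of the work is already distributed throughout the preceding discussion, so the proof amounts to collecting these pieces and checking that the claimed smooth structures are well-defined.

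First, I would verify that $\mathcal{P}_0 = P_{\hat x}\Spin(M)$ and $\mathcal{P}_1 = P_{\hat x}\Spin(M)^{[2]} \times_{L\Spin(M)} \widetilde{L\Spin}(M)$ are genuine Fr\'echet manifolds. For $\mathcal{P}_0$ this is standard; for $\mathcal{P}_1$ one uses that the endpoint-evaluation $P_{\hat x}\Spin(M) \to \Spin(M)$ is a submersion (so the fibre product $P_{\hat x}\Spin(M)^{[2]}$ is smooth), that $\cup : P_{\hat x}\Spin(M)^{[2]} \to L\Spin(M)$ is smooth by flatness at the endpoints, and that $\widetilde{L\Spin}(M) \to L\Spin(M)$ is a principal $\U(1)$-bundle. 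The source, target, composition (via $\lambda$), and identity maps are smooth by the smoothness of $\lambda$ as part of the fusive loop-spin structure; the groupoid axioms follow from associativity of $\lambda$ and the characterization of identities in \cite[Prop. 3.1.1]{Waldorfc}. The map $\pi_0 : \mathcal{P}_0 \to M$, $\beta \mapsto \varpi(\beta(\pi))$, is a composition of the endpoint-evaluation (a submersion) and the bundle projection $\varpi$ of $\Spin(M)$ (a submersion), hence a submersion, and extends to a functor $\pi : \mathcal{P} \to M_{dis}$ by the fact that every morphism in $\mathcal{P}$ connects paths with the same endpoint.

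Second, for the action $R$ I would check that $R_0(\beta,\gamma) = \beta\gamma$ and $R_1$ from \cref{2-principal-action} are smooth maps valued in $\mathcal{P}_0$ and $\mathcal{P}_1$ respectively, that source and target are preserved, and that the composition is intertwined: this is exactly where the equivariance relation \cref{eq:FusionIsEquivariant} of the fusion product $\lambda$ with the fusion product $\mu$ on $\widetilde{L\Spin}(d)$ enters. That $R$ is a \emph{strict} right action follows from the strictness of the $\Spin(d)$-action on $\Spin(M)$ and of the $\widetilde{L\Spin}(d)$-action on $\widetilde{L\Spin}(M)$. That $\pi \circ R = \pi \circ \pr_\mathcal{P}$ follows from $\varpi(\beta(\pi)\gamma(\pi)) = \varpi(\beta(\pi))$. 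The weak-equivalence property of $\tilde R$ is the content of the lemma immediately preceding the proposition, so this step is already complete.

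Third, for the lift of $\Spin(M)$, the functor $P : \mathcal{P} \to \Spin(M)_{dis}$ is defined on objects by $\beta \mapsto \beta(\pi)$; as already observed, morphisms in $\mathcal{P}$ only exist between paths with common endpoint, so $P$ extends to a functor into the discrete groupoid $\Spin(M)_{dis}$. Smoothness is immediate from smoothness of endpoint-evaluation, and the diagram over $M$ commutes by construction. Equivariance along $q$ reduces to the identity $\varpi(\beta(\pi))\cdot q_0(\gamma) = \varpi((\beta\gamma)(\pi))$, which holds pointwise because $q_0$ is the endpoint-evaluation and the $\Spin(d)$-action on $\Spin(M)$ is fibrewise. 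The main obstacle I anticipate is bookkeeping the smoothness of the structures on $\mathcal{P}_1$ (in particular that composition via $\lambda$ and the associated identity section are smooth in the Fr\'echet sense), but this is handled by the smoothness hypotheses built into \cref{DefinitionFusiveSpinStructure}; everything else is a routine verification.
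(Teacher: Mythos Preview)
Your proposal is correct and follows the paper's approach exactly: in the paper the proposition is stated as a summary (``Summarizing, we have the following result'') of the preceding construction, with the only nontrivial step being the weak-equivalence lemma you correctly cite as already established. One small slip to fix in your point (iii): the equivariance of $P:\mathcal{P}\to\Spin(M)_{dis}$ along $q$ is the identity $(\beta\gamma)(\pi)=\beta(\pi)\cdot\gamma(\pi)$ in $\Spin(M)$; the formula you wrote, $\varpi(\beta(\pi))\cdot q_0(\gamma)=\varpi((\beta\gamma)(\pi))$, applies the bundle projection $\varpi$ and lands in $M$, where there is no $\Spin(d)$-action to speak of.
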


Next we pass from version (3) to version (4), using the functor constructed in Section 7.1 of \cite{Nikolaus}.
We obtain from $\mathcal{P}$ the following $\String(d)$-bundle gerbe $\stringgerbe(M)$ (in the sense of \cref{def:bundlegerbe}):
\begin{itemize}
        \item 
                Its surjective submersion is $Y:=\mathcal{P}_0=P_{\hat x}\Spin(M) \to M$, $\beta \mapsto \varpi(\beta(1))$.
        \item 
                Over the double fibre product $Y \times_M Y$ it has the following $\String(d)$-principal bundle (in the sense of \cref{def:principalgammabundle}): 
                \begin{itemize}
                        \item 
                                Its total space is 
                                \begin{equation}
                                \label{AppEqBundleQ}
                                P:= \mathcal{P}_1 \times P_e\Spin(d)\text{,}
                                \end{equation}
                                and so the elements are quadruples $(\beta_1,\beta_2,\Phi,\gamma)$,
                                where $\beta_1, \beta_2 \in P_{\hat x} \Spin(M)$ with $\beta_1(\pi) = \beta_2(\pi)$, $\Phi$ is a lift of $\beta_1 \cup \beta_2 \in L\Spin(M)$ to $\widetilde{L \Spin}(M)$ and $\gamma \in P_e \Spin(d)$.
                        \item
                                The bundle projection is 
                                \begin{equation} \remember{AppEqBundleProjectionQ}
                                        {(\beta_1,\beta_2,\Phi,\gamma)\mapsto (\beta_1,\beta_2\gamma^{-1})}.
                                \end{equation}
\begin{comment}                               
                                Observe here that elements $\tilde{\beta}_1, \tilde{\beta}_2 \in \mathcal{P}_0 \times_M \mathcal{P}_0$ need not satisfy $\tilde{\beta}_1(\pi) = \tilde{\beta}_2(\pi)$ (generally, these elements only lie in the same fibre of $\Spin(M)$ over $M$), while for $(\beta_1,\beta_2,\Phi) \in \mathcal{P}_1$, we do have $\beta_1(\pi) = \beta_2(\pi)$.
                                The additional element $\gamma \in P_e \Spin(d)$ takes care of this discrepancy.
\end{comment}
                                %
                        \item
                                The anchor map $\phi:P \to \String(d)_0$ is 
                                \begin{equation} \remember{AppEqAnchorQ}
                                        {(\beta_1,\beta_2,\Phi,\gamma) \mapsto \gamma}.
                                \end{equation} 
\item
the principal $\String(d)_s$-action is 
\begin{equation}
 \remember{AppEqPrincipalActionQ}
 {(\beta_1,\beta_2,\Phi,\gamma)\cdot (\gamma',e,X) := (\beta_1,\beta_2\gamma^{-1}\gamma'^{-1}\gamma,\Phi\cdot i(\gamma^{-1})\cdot X \cdot i(\gamma'^{-1}\gamma),\gamma'^{-1}\gamma)}\text{.}
\end{equation}                                
This  requires some explanation, because \cite{Nikolaus} does not use principal ${\mathcal{G}}$-bundles as in \cref{def:principalgammabundle} but an equivalent formulation whose total space does not carry an action of  the group ${\mathcal{G}}_s$ but rather an action of the groupoid ${\mathcal{G}}$, see \cite[Def. 2.2.1]{Nikolaus}. This $\mathcal{G}$-action is  given by \cite[Eq. 7.1.1]{Nikolaus}, namely,
\begin{align}
 \label{EqPrincipalActionQOld}
(\beta_1,\beta_2,\Phi,\gamma) \circ (\gamma,\gamma',X) := (\beta_1,\beta_2\gamma^{-1}\gamma',\Phi\cdot \id_{\gamma^{-1}}\cdot X,\gamma')\text{.}
\end{align}

The equivalence between the two notions of principal ${\mathcal{G}}$-bundles is described in \cite[Lemma 2.2.9]{Nikolaus}. Under this equivalence, a ${\mathcal{G}}$-action is transformed into a ${\mathcal{G}}_s$-action via the formula
\begin{equation*}
p\cdot h := p \circ (h,t(h)^{-1}\phi(p))\text{.}
\end{equation*}                       
Here, $h\in {\mathcal{G}}_s$ and $(h,t(h^{-1})\phi(p)) \in {\mathcal{G}}_s \rtimes_{\alpha} G \cong {\mathcal{G}}_1$. Under the latter canonical diffeomorphism, see, e.g. \cite[\S 3]{LudewigWaldorf2Group}, 
\begin{equation*}
(h,t(h)^{-1}\phi(p))\mapsto hi(t(h)^{-1}\phi(p))\in {\mathcal{G}}_1\text{.}
\end{equation*}
\begin{comment}
The diffeomorphism ${\mathcal{G}}_1 \to {\mathcal{G}}_s \rtimes G$ is given by $\gamma \mapsto (\gamma\cdot i(s(\gamma))^{-1},s(\gamma))$. Thus, its inverse is given by $(\eta,g) \mapsto \eta\cdot i(g)$.
Thus, the action is
\begin{equation*}
p\cdot h := p \circ hi(t(h)^{-1}\phi(p))\text{.}
\end{equation*}                       

\end{comment}

In the present situation, we get for $p=(\beta_1,\beta_2,\Phi,\gamma)$ and $h=(\gamma',1,X)$
\begin{equation*}
((\gamma',1,X),t(\gamma'^{-1},1,X^{-1})\gamma)\mapsto (\gamma',1,X)\cdot \id_{\gamma'^{-1}\gamma}=(\gamma,\gamma'^{-1}\gamma,X\cdot \id_{\gamma'^{-1}\gamma})\text{.}
\end{equation*}
Letting this act according to \cref{EqPrincipalActionQOld}, we get the claimed expression \cref{AppEqPrincipalActionQ}.

                \end{itemize}
        \item
                On the triple fibre product $Y \times_M Y \times_M Y$, it has the following bundle morphism
                \begin{equation} \label{AppEqBundleGerbeProduct}
                        \mu_{\stringgerbe(M)} : \pr_{23}^{*}P \otimes \pr_{12}^{*}P \to \pr_{13}^* P.
                \end{equation}
                As recalled in  \cref{sec:principalbundles}, elements in the tensor product $\pr_{23}^{*}P \otimes \pr_{12}^{*}P$ are represented by pairs
\begin{equation}
\label{eq:stringtp1}
(\beta_2',\beta_3,\Phi_{23},\gamma_{23})\otimes (\beta_1,\beta_2,\Phi_{12},\gamma_{12}) \in P \times P
\end{equation}
such that $\beta_2'=\beta_2\gamma_{12}^{-1}$; such a pair projects then to $(\beta_1,\beta_2\gamma_{12}^{-1},\beta_3\gamma_{23}^{-1})\in Y^{[3]}$. 
\begin{comment}
The equivalence relation is
\begin{multline}
\label{eq:stringtp2}
((\beta_2',\beta_3,\Phi_{23},\gamma_{23})\cdot (\gamma',e,U),(\beta_1,\beta_2,\Phi_{12},\gamma_{12})) \\\sim ((\beta_2',\beta_3,\Phi_{23},\gamma_{23}),(\beta_1,\beta_2,\Phi_{12},\gamma_{12})\cdot(\gamma_{23}^{-1}\gamma'\gamma_{23},e,\id_{\gamma_{23}^{-1}}U\id_{\gamma_{23}})))
\end{multline} 
If we spell out the actions, this  becomes
\begin{multline*}
((\beta_2',\beta_3\gamma_{23}^{-1}\gamma'^{-1}\gamma_{23},\Phi_{23}\cdot\id_{\gamma_{23}^{-1}}\cdot U\cdot \id_{\gamma'^{-1}\gamma_{23}},\gamma'^{-1}\gamma_{23}),(\beta_1,\beta_2,\Phi_{12},\gamma_{12})) \\\sim ((\beta_2',\beta_3,\Phi_{23},\gamma_{23}),(\beta_1,\beta_2\gamma_{12}^{-1}\gamma_{23}^{-1}\gamma'^{-1}\gamma_{23},\Phi_{12}\cdot\id_{\gamma_{23}^{-1}\gamma'^{-1}}\cdot U\cdot \id_{\gamma'^{-1}\gamma_{23}}  ,\gamma_{23}^{-1}\gamma'^{-1}\gamma_{23}\gamma_{12}))
\end{multline*} 
\end{comment}
The anchor map sends above element to $\gamma_{23}\gamma_{12}$, and the principal ${\mathcal{G}}_s$-action is the one on the first factor. 
The bundle gerbe product \cref{AppEqBundleGerbeProduct} is then given by (see \cite[Eq.~(7.1.6)]{Nikolausa})
                \begin{equation}
                        \remember{Appeq:bgp:G}
                        {\mu_{\stringgerbe(M)}((\beta_2',\beta_3,\Phi_{23},\gamma_{23}),(\beta_1,\beta_2,\Phi_{12},\gamma_{12})) = (\beta_1,\beta_3\gamma_{12},\lambda(\Phi_{23}\cdot \id_{\gamma_{12}} \otimes \Phi_{12}),\gamma_{23}\gamma_{12})}\text{.}
                \end{equation}

\end{itemize}

In the remainder of this section we will prove that the procedure introduced above to get from Version (2) to Version (4) establishes the equivalence. We consider the following diagram
\begin{equation}
        \label{eq:diagram}
\begin{aligned}
        \xymatrix{\text{Version (1)}  \ar[r]^{T} & \text{Version (2)}\ar[d]^{H} \\ \text{Version (4)} \ar[u]^{L} & \text{Version (3)} \ar[l]^{\ddot{A}}}
\end{aligned}   
\end{equation}
The map $T$ takes a trivialization of the Chern-Simons 2-gerbe and transgresses it to a trivialization of the spin lifting gerbe on $LM$, which in turn can be translated into a loop-spin structure \cite{Waldorfa,Waldorfb}. The map $L$ regards the Chern-Simons 2-gerbe as the lifting 2-gerbe for lifting the structure group of $\Spin(M)$ to $\String(d)$ \cite{Nikolausa}, and regards a  ${\mathcal{G}}$-bundle gerbe as a solution to this lifting problem.  The maps $T$ and $L$ are bijections (on a level of equivalence classes). The map $H$ is  the one constructed above, and the map $\ddot{A}$ is the canonical equivalence between principal ${\mathcal{G}}$-bundles and ${\mathcal{G}}$-bundle gerbes. 

\begin{proposition}
\label{prop:versions}
Diagram \cref{eq:diagram} is commutative.
\end{proposition}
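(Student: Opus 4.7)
The plan is to prove commutativity by going around the square and verifying that the composition $T \circ \ddot{A}^{-1} \circ L^{-1}$ agrees with $H$ on the level of equivalence classes (all four vertices of the diagram being viewed as bicategories of string structures in their respective guises). The three edges $T$, $L$, and $\ddot{A}$ are equivalences of bicategories established in the cited literature, so each of them is invertible. Thus commutativity of \eqref{eq:diagram} is equivalent to the identity $H = \ddot{A}^{-1} \circ L^{-1} \circ T$, which can be checked on a single representative because all maps preserve equivalence classes.

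First, I would fix a fusive loop-spin structure $\widetilde{L\Spin}(M)$ and explicitly compute the $\String(d)$-bundle gerbe $\ddot{A}(H(\widetilde{L\Spin}(M)))$ from the construction recalled in \cref{SubsectionStringStructuresFusiveLoopSpin}: the submersion $Y = P_{\hat x}\Spin(M) \to M$, the principal $\String(d)$-bundle $P$ over $Y^{[2]}$ given by \cref{AppEqBundleQ}, and the bundle gerbe product \cref{Appeq:bgp:G}. Next, apply $L$ to $\ddot{A}(H(\widetilde{L\Spin}(M)))$; by the interpretation of $\stringgerbe(M)$ as a solution to the lifting problem for the Chern-Simons 2-gerbe $\mathbb{CS}(\Spin(M))$ (as in \cite[\S7]{Nikolausa}), this produces a trivialization $t$ of $\mathbb{CS}(\Spin(M))$ whose defining data is extracted from the principal bundle $P$ together with $\mu_{\stringgerbe(M)}$. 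Then apply $T$ to $t$, i.e.\ transgress: one obtains a loop-spin structure over $LM$ together with a fusion product, from a trivialization of the transgressed spin lifting gerbe on $LM$ in the sense of \cite{Waldorfa,Waldorfb}.

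The key step is to verify that the transgressed structure $T(t)$ is canonically isomorphic to the original $\widetilde{L\Spin}(M)$ as fusive loop-spin structures. Concretely, the transgression procedure produces, over a loop $\alpha = \beta_1 \cup \beta_2 \in LM$, the fibre $T(t)_\alpha = P_{(\beta_1, \beta_2)}$, and its fusion product is induced by $\mu_{\stringgerbe(M)}$. One then compares these identifications pointwise with the natural parametrization of $\widetilde{L\Spin}(M)_\alpha$ in terms of pairs $(\beta_1, \beta_2, \Phi)$, using that the anchor-projection in \cref{AppEqBundleProjectionQ,AppEqAnchorQ} together with the principal action \cref{AppEqPrincipalActionQ} was designed precisely to absorb the discrepancy between the end-point fibres of the $\beta_i$ in $\Spin(M)$. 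The fusion product $\lambda$ of $\widetilde{L\Spin}(M)$ enters in \cref{Appeq:bgp:G} exactly so that, after untwisting the $P_e\Spin(d)$-slot via $i$ from \cref{FusionFactorization}, one recovers $\lambda$ back on transgression.

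The main obstacle will be this last pointwise comparison: tracing the successive quotienting and fibre-product identifications through $T$, $L$, and $\ddot{A}$, and reconciling the bookkeeping of base points, paths $\gamma_i \in P_e\Spin(d)$, and the fusion factorization $i$. Once the identifications are made explicit over triple fibre products, the cocycle/associativity constraints on both sides coincide by construction of $H$; this follows from the associativity of $\lambda$ combined with the defining equivariance relation \cref{eq:FusionIsEquivariant} of the fusion product under the action of the basic central extension, which is exactly the compatibility needed to make the bundle gerbe product \cref{Appeq:bgp:G} well-defined. Since all these matchings are canonical (involve no choices), one obtains a canonical isomorphism $T(L(\ddot{A}(H(\widetilde{L\Spin}(M))))) \cong \widetilde{L\Spin}(M)$, which proves the claim.
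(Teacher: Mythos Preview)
Your plan is to close the square at Version~(2), i.e., to show that $T\circ L\circ \ddot{A}\circ H$ is the identity on fusive loop-spin structures. The paper instead closes the square at Version~(1): it shows $T^{-1}=L\circ\ddot{A}\circ H$ by comparing, for a given fusive loop-spin structure, the two resulting trivializations of the Chern-Simons 2-gerbe. The paper's key shortcut is that trivializations of the Chern-Simons 2-gerbe are classified up to equivalence by their \emph{string class}, a $\U(1)$-bundle gerbe on $\Spin(M)$; so the comparison reduces to matching two explicit $\U(1)$-bundle gerbes over $\Spin(M)$. On the $T^{-1}$ side this is the regression of the fusive bundle $\widetilde{L\Spin}(M)\to L\Spin(M)$ (submersion $P_{\hat x}\Spin(M)\to\Spin(M)$, bundle $\cup^*\widetilde{L\Spin}(M)$, product $\lambda$). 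On the $L\circ\ddot{A}\circ H$ side one pulls $\stringgerbe(M)$ back along $\varpi:\Spin(M)\to M$, observes that over $P_{\hat x}\Spin(M)\times_{\Spin(M)}P_{\hat x}\Spin(M)$ the anchor-zero slice $\gamma=e$ of $P$ identifies $i_*(\mathcal{P}_1)$ with the restricted $\String(d)$-bundle, and that under this identification the bundle gerbe product \eqref{Appeq:bgp:G} restricts to $\lambda$. The two $\U(1)$-bundle gerbes literally coincide, and the proof is done.

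Your route is in principle legitimate but substantially harder, and the sketch has two genuine gaps. First, transgression $T$ produces a structure over \emph{all} of $LM$, not only over loops of the form $\beta_1\cup\beta_2$; your identification ``$T(t)_\alpha=P_{(\beta_1,\beta_2)}$'' is only stated on the image of $\cup$, and you would still need to extend the comparison to general loops and to show the two fusive loop-spin structures are isomorphic as such (not just after pullback along $\cup$). Second, the step you flag as ``the main obstacle'' --- unwinding the composite $T\circ L$ on an explicit trivialization through the definitions in \cite{Waldorfa,Waldorfb,Nikolausa} --- is exactly the hard part, and your text replaces it with phrases like ``tracing the successive quotienting and fibre-product identifications''. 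That is where the content lives, and it is not carried out. The paper's reduction to string classes avoids precisely this transgression bookkeeping: it trades an isomorphism of fusive loop-spin structures for an equality of cohomology classes represented by concrete $\U(1)$-gerbes, which is a one-line restriction computation.
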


\begin{proof}
        We show that $T^{-1}=L \circ \ddot{A} \circ H$. 
        To this end, we consider a fusive loop-spin structure $\widetilde{L\Spin}(M)$ and show that the two trivializations of the Chern-Simons gerbe obtained by $T^{-1}$ and $L\circ \ddot{A}\circ H$ are equivalent.
        We will use the fact that two trivializations of the Chern-Simons 2-gerbe are equivalent if and only if the corresponding string classes coincide, i.e., the 3-classes of the bundle gerbes over $\Spin(M)$, see \cite[Thm. 5.3.1]{Waldorfb}. 

        Under the map $T^{-1}$, the string class is represented by the regression of the fusive principal $\U(1)$-bundle that underlies the given fusive loop-spin structure, see \cite[Cor. 4.4.8]{Waldorfa}; namely, the bundle $\widetilde{L\Spin}(M) \to L\Spin(M)$ and its fusion product $\lambda$. 
        Thus, the regression (w.r.t. the already fixed point ${\hat x}$) is the following bundle gerbe over $\Spin(M)$:
        \begin{itemize}
                \item 
                        the surjective submersion is the end point evaluation $Y=P_{\hat x}\Spin(M) \to \Spin(M)$.
                \item
                        the principal $\U(1)$-bundle over $Y^{[2]}$ is $\cup^{*}\widetilde{L\Spin}(M)$. 
                \item
                        the bundle gerbe product is $\lambda$.
        \end{itemize}
        By construction, this bundle gerbe represents the string class.

        Now we look at the map $L\circ \ddot{A}\circ H$. 
        According to the description of the map $L$ in \cite{Nikolausa} we have to consider the $\String(d)$-bundle gerbe $\stringgerbe(M)=\ddot{A}(\mathcal{P}) $ associated the principal $\String(d)$-2-bundle $\mathcal{P}$, take its pullback along $\varpi: \Spin(M) \to M$, and then identify $\varpi^{*}\stringgerbe(M)$ with a $\String(d)$-bundle gerbe of the form $i_{*}(\stringgerbe(M))$, where $\mathcal{Q}$ is a $\U(1)$-bundle gerbe over $\Spin(M)$ and $i: \mathscr{B}\!\U(1) \to \String(d)$ is the central inclusion.
        Then, $\mathcal{Q}$ represents the string class. 
        The commutative diagram
        \begin{equation*}
                \xymatrix{P_{\hat x}\Spin(M) \ar[d] \ar@{=}[r] & P_{{\hat x}}\Spin(M) \ar[d] \\ \Spin(M)\ar[r]_-{\varpi} & M}
        \end{equation*} 
        shows that $\varpi^*\mathcal{Q}$ has the surjective submersion $P_{\hat x}\Spin(M) \to \Spin(M)$. On double fibre product (over $\Spin(M)$), we have a commutative diagram
        \begin{equation*}
                \xymatrix{\mathcal{P}_1 \ar[r]^{\varphi} \ar[d]  & P \ar[d] \\ P_{\hat x}\Spin(M) \times_{\Spin(M)} P_{\hat x}\Spin(M) \ar[r] & P_{\hat x}\Spin(M) \times_{M} P_{\hat x}\Spin(M)}
        \end{equation*}
        where the left vertical map is $(\beta_1,\beta_2,\Phi) \mapsto (\beta_1,\beta_2)$, the right vertical map is the bundle projection \cref{EqBundleProjectionQ} and the map $\varphi$  is $(\beta_1,\beta_2,\Phi)\mapsto (\beta_1,\beta_2,\Phi,e)$, with $e$ the constant path at $e\in \Spin(d)$. 
        The map left vertical map is a principal $\U(1)$-bundle, and $\varphi$ is equivariant along $\mathscr{B}\!\U(1) \to \String(d)$, as
        \begin{align*}
                \varphi((\beta_1,\beta_2,\Phi) \cdot z) &=\varphi(\beta_1,\beta_2,\Phi z)\\
                                                        &=(\beta_1,\beta_2,\Phi z, c_e)\\
                                                        &=(\beta_1,\beta_2,\Phi,c_e)\circ(c_e,c_e,z)\\
                                                        &=\varphi(\beta_1,\beta_2,\Phi)\circ (c_e,c_e,z)\text{,}
        \end{align*}
        with the principal action  defined in \cref{EqPrincipalActionQOld}.
        This shows that $\varphi$ is an isomorphism  
        \begin{equation*}
                i_{*}(\mathcal{P}_1) \cong P|_{Y^{[2]}}\text{.}
        \end{equation*}
        Finally, restricting the bundle gerbe product \cref{Appeq:bgp:G} along $\varphi$, we  recover $\lambda$. 
        \begin{comment}
                Observe that we need to define the tensor product $\pr_{23}^*\mathcal{P}_1 \otimes \pr_{12}^*\mathcal{P}_1$ in the same \quot{enhanced} way as with $P$, as a quotient of a subset of $\mathcal{P}_1 \times \mathcal{P}_1 \times \mathrm{U}(1)$. 
                Then
                \begin{align*}
                        \mu((\pr_{23}^*\varphi \otimes \pr_{12}^*)((\beta_2, \beta_3, \Phi_{23}), (\beta_1, \beta_2, \Phi_{12}), (*, *, z)))
  &= \mu((\beta_2, \beta_3, \Phi_{23}, c_e), (\beta_1, \beta_2, \Phi_{12}, c_e), (c_e, c_e, z))\\
  &= (\beta_1, \beta_3, \lambda(\Phi_{12}, \Phi_{23}z), c_e).
                \end{align*}
        \end{comment}
        Summarizing, the $\U(1)$-bundle gerbe $\mathcal{Q}$ we are looking for is precisely the one we got under $T^{-1}$.
\end{proof}

\setsecnumdepth{1}

\section{From rigged bundles to continuous bundles}

\label{sec:riggedtocontinuous}

In this section we recall the notions of rigged Hilbert spaces and rigged von Neumann algebras, and the corresponding notions of locally trivial bundles, as set up in \cite{kristel2020smooth,Kristel2019c}. Then, we explain how to pass from this \emph{rigged} setting to the \emph{continuous} setting considered in \cref{sec:vonNeumannbundles}.

        A \emph{rigged Hilbert space} is a Fr\'{e}chet space $E$ equipped with a continuous (sesquilinear) inner product;
we denote by $\hat{E}$ its Hilbert completion.
        A \emph{rigged \cstar-algebra} is a Fr\'{e}chet algebra $D$, equipped with a continuous norm and a continuous complex anti-linear involutive anti-automorphism, such that its norm completion $\hat D$ is a \cstar-algebra.                 A \emph{rigged $D$-module} is a rigged Hilbert space $E$ together with a representation of $D$ on $E$ whose action map $D \times E \rightarrow E$ is smooth, and the following conditions hold for all $a \in D$, and all $v,w \in E$
        \begin{equation}
        \label{def:RiggedCStarModule}
                \| a\lact \xi\| \leqslant \| a \| \| \xi\| , \quand \langle a \lact \xi,\eta \rangle = \langle \xi, a^* \lact \eta \rangle.
        \end{equation}
The conditions in \cref{def:RiggedCStarModule} guarantee  that the action induces a  $\ast$-homomorphism $\hat D \to \mathcal{B}(\hat E)$, i.e., a representation of the \cstar-algebra $\hat D$ on the Hilbert space $\hat E$, see \cite[Rem. 2.2.11]{kristel2020smooth}.

\begin{definition}
        A \emph{rigged von Neumann algebra} is a pair $(D,E)$ consisting of a rigged \cstar-algebra $D$ and a rigged $D$-module $E$, with the property that the representation $\hat{D} \rightarrow \mc{B}(\hat{E})$ is faithful.
\end{definition}

From a rigged von Neumann algebra $(D,E)$ we obtain an ordinary von Neumann algebra $D'' \subseteq \mc{B}(\hat{E})$, see \cite[Remark 2.1.7]{Kristel2019c}. 
In particular, we consider later the topological group $I(\hat{E}) \subset \B(\hat E)$ defined  in \cref{DefinitionNH}.

\begin{example}
\label{ex:riggedspaces}
Let $A$ be the hyperfinite type III$_1$ factor.
In \cite[\S 3.2]{Kristel2019c} we constructed a Fr\'echet subalgebra $A\smooth \subset A$ and a Fr\'echet subspace $L^2(A)\smooth \subset L^2(A)$ such that $L^2(A)\smooth$ is a rigged $A\smooth$-module, and $(A\smooth,L^2(A)\smooth)$ is a rigged von Neumann algebra, with completions
\begin{equation}
\label{completion-l2A}
\widehat{A\smooth} = A
\quomma
\widehat {L^2(A)\smooth} = L^2(A)
\end{equation} 
We remark that the group homomorphism $\omega: P\Spin(d) \to \Aut(A)$ from \cref{KleinOmega} extends to a group homomorphism $\omega': L\Spin(d) \to \Aut(A)$ along the doubling map $\Delta: P\Spin(d) \to L\Spin(d)$, defined by
\begin{equation*}
  \omega^\prime(\gamma) a = t_{\Aut(A)}(\Omega(\tilde{\gamma}))(a),
\end{equation*}
where $\tilde{\gamma}$ is any lift of $\gamma$ to the central extension and $\Omega$ is the group homomorphism from \cref{GrossOmega}.
The action $\omega$ of $P\Spin(d)$ as well as the extension $\omega'$ of $L\Spin(d)$ on $A$ restrict to  \emph{smooth} actions on $A\smooth$, while the action $\Omega$ of $\widetilde{L\Spin}(d)$ on $N(A)$  from \cref{GrossOmega} restricts to a smooth action on $L^2(A)\smooth$ \cite[Prop. 3.2.2]{Kristel2019c}.
\end{example}

We continue with recalling the notion of rigged \emph{bundles} on the basis of Section 2 of \cite{kristel2020smooth}. 
Let $E$ be a rigged Hilbert space. 
A \emph{rigged Hilbert bundle} over a Fr\'echet manifold $M$ with typical fibre $E$ is a Fr\'echet vector bundle  $\mc{E}$ over $M$  with typical fibre $E$, equipped with fibrewise inner products such that local trivializations can be chosen to be fibrewise isometric. 
A \emph{unitary morphism of rigged Hilbert bundles} is an isomorphism of Fr\'echet vector bundles. 
The fibrewise completion $\hat{\mathcal{E}}$ of $\mathcal{E}$ is a  locally trivial continuous Hilbert bundle over $M$ with typical fibre $\hat E$ \cite[Lem.~2.1.13]{kristel2020smooth}.
Likewise, a unitary morphism of rigged Hilbert bundles extends uniquely to a unitary Hilbert bundle isomorphism. 

\begin{example}
\label{ex:spinorbundleonloopspace}
Let $M$ be a spin manifold, and let $\Spin(M)$ be its spin structure, a $\Spin(d)$-principal bundle over $M$. 
Let further $\widetilde{L\Spin}(M)$ be a spin structure on $LM$, i.e., a lift of the structure group of $L\Spin(M)$ along the basic central extension of $L\Spin(d)$, see \cref{SectionStringorRep}. 
Then, continuing \cref{ex:riggedspaces}, the associated vector bundle
\begin{equation*}
\mathcal{S}\smooth_{LM}:= \widetilde{L\Spin}(M) \times_{\widetilde{L\Spin}(d)} L^2(A)\smooth
\end{equation*} 
is a rigged Hilbert bundle over $LM$ with typical fibre $L^2(A)\smooth$, the \emph{smooth spinor bundle on loop space}; see \cite[Lemma 2.2.2 \& Def. 4.1.4]{Kristel2019c}. Due to \cref{completion-l2A}, the fibrewise completion of $\mathcal{S}_{LM}\smooth$ becomes the Hilbert bundle \begin{equation*}
\widehat{\mathcal{S}_{LM}\smooth} = \widetilde{L\Spin}(M) \times_{\widetilde{L\Spin}(d)} L^2(A)\text{.}
\end{equation*}
In order to compare this with the continuous spinor bundle $\mathcal{S}_{LM}$  from \cref{DefinitionSpinorBundle} we have to consider the difference between the representations $\Omega$ (used for $\mathcal{S}_{LM}\smooth$) and $\Omega'$ (used for $\mathcal{S}_{LM}$). Let $s: LM \to LM$ be the map induced by complex conjugation ($t \mapsto 2\pi -t$) on $S^1$. We claim that $s$ lifts to an involution $\tilde s$ of $\widetilde{L\Spin}(M)$ in such a way that $\tilde s(\Phi \cdot X) = \tilde s(\Phi) \cdot \tilde\sigma(X)$, where $\tilde\sigma$ is a similar lift of complex conjugation to $\widetilde{L\Spin}(d)$. The lifts $\tilde s$ and $\tilde\sigma$ can be induced from the fusion products $\lambda$ and $\mu$, respectively, see \cref{SubsectionStringStructuresFusiveLoopSpin}. The representation $\Omega$ is compatible with the lift $\tilde\sigma$ in the sense that $\Omega'(X)=J\Omega(X)J = \Omega(\tilde\sigma(X))$; see \cite[Prop. 4.9 \& 4.11]{Kristel2019}. Using this, one can check that $[\Phi,\xi] \mapsto [\tilde s(\Phi),\xi]$ establishes an isomorphism \begin{equation*}
\widehat{\mathcal{S}\smooth_{LM}} = s^{*} \mathcal{S}_{LM}
\end{equation*}
of continuous Hilbert bundles over $LM$.
\end{example}

Similarly, we define rigged \cstar-algebra bundles. If $D$ is a rigged \cstar-algebra, then a \emph{rigged \cstar-algebra bundle} over $M$ with typical fibre $D$ is a Fr\'echet vector bundle $\mc{D}$ where each fibre is equipped with a norm and the structure of a $\ast$-algebra, such that local trivializations can be chosen to be fibrewise isometric $\ast$-isomorphisms. 
 A \emph{morphism of rigged \cstar-algebra bundles} over $M$ is a morphism of Fr\'echet vector bundles that is fibrewise a morphism of $\ast$-algebras and locally bounded with respect to the norms.
The fibrewise norm completion gives a locally trivial continuous bundle of \cstar-algebras with typical fibre $\hat D$, and strongly continuous transition functions \cite[Lem.~2.2.6]{kristel2020smooth}. Likewise, any morphism of rigged \cstar-algebra bundles extends uniquely to a  morphism of continuous bundles of \cstar-algebras.

Let $D$ be a rigged \cstar-algebra and  $E$ be a rigged $D$-module, and let $\mathcal{D}$ be a rigged \cstar-algebra bundle over $M$ with typical fibre $D$. 
A \emph{rigged $\mathcal{D}$-module bundle with typical fibre $E$} is a rigged Hilbert bundle $\mathcal{E}$ with typical fibre $E$, together with, for each $x \in X$, the structure of a rigged $\mathcal{D}_x$-module on $\mathcal{E}_x$, such that around every point in $M$ there exist local trivializations $\phi$ of $\mc{D}$ and $u$ of $\mc{E}$  that fibrewise intertwine the actions, i.e., we have $u_x(a \lact \xi)=\phi_x(a)\lact u_x(\xi)$ for all $x\in M$ over which $\phi$ and $u$ are defined, and all $a \in \mathcal{D}_x$ and $v\in \mathcal{E}_x$.
A pair $(\phi,u)$ of local trivializations with this property is called \emph{local module trivialization}. 
A \emph{unitary intertwiner} between a rigged $\mathcal{D}_1$-module bundle $\mathcal{E}_1$ and a rigged $\mathcal{D}_2$-module bundle $\mathcal{E}_2$ is a pair $(\varphi,U)$ consisting of a morphism $\varphi:\mathcal{D}_1 \to \mathcal{D}_2$ of rigged \cstar-algebra bundles and a unitary morphism $U$ of rigged Hilbert bundles, such that $\varphi_x$ is an intertwiner along $U_x$ for each $x\in M$ \cite[Def. 2.2.6]{Kristel2019c}. 

\begin{definition}\cite[Definition 2.9.9]{Kristel2019c}
Let $(D,E)$ be a rigged von Neumann algebra.
 A \emph{rigged von Neumann algebra bundle} over $M$ with typical fibre $(D,E)$ is a pair $(\mc{D},\mc{E})$, where $\mathcal{D}$ is a rigged \cstar-algebra bundle over $M$ and $\mathcal{E}$ is a rigged $\mathcal{D}$-module bundle with typical fibre $E$.  
\end{definition}

There is also a corresponding notion of morphisms between rigged von Neumann algebra bundles, called \emph{spatial morphisms}, which is just a unitary intertwiner between the rigged module bundles.

\begin{example}
\label{ex:riggedcliffordalgebrabundle}
We consider the smooth action $\omega': L\Spin(d) \times A\smooth \to A\smooth$ recalled in \cref{ex:riggedspaces}. Within the theory of rigged  bundles, we form 
 the associated rigged \cstar-algebra bundle
\begin{equation}
\label{rigged-algebra-bundle-before-pullback}
\mathcal{D}:=L\Spin(M) \times_{L\Spin(d)} A\smooth
\end{equation}
 over $LM$ with typical fibre $A\smooth$ \cite[\S 5.1]{Kristel2019c}.
Next we consider the smooth action $\Omega$ of $\widetilde{L\Spin}(d)$ on $L^2(A)\smooth$ recalled   in \cref{ex:riggedspaces} and form the associated rigged Hilbert bundle
\begin{equation}
\label{rigged-module-bundle-before-pullback}
\mathcal{E} := \widetilde {L\Spin}(M) \times_{\widetilde{L\Spin}(d)} L^2(A)\smooth.
\end{equation}
It is shown in \cite[Prop. 5.1.2]{Kristel2019c} that $\mathcal{E}$ is a rigged module bundle over $\mathcal{D}$, and that the pair $\mathcal{A}\smooth_{LM}:=(\mathcal{D},\mathcal{E})$ is a rigged von Neumann algebra bundle over $LM$. 
\end{example}

Let $(\mc{D},\mc{E})$ be a rigged von Neumann algebra bundle with typical fibre $(D,E)$ over $M$.
In each fibre over $x \in M$, we obtain a rigged von Neumann algebra $(\mc{D}_{x},\mc{E}_{x})$, which can thus be completed to a honest von Neumann algebra 
\begin{equation}
\label{eq:fibrewisefibres}
\mc{D}_{x}'' \subseteq \B(\hat{\mc{E}}_{x})\text{.}
\end{equation}
The collection $\mathcal{D}'':=(\mathcal{D}_x'')_{x\in X}$ of von Neumann algebras can be combined to a continuous bundle of von Neumann algebras, as follows.
Consider an open subset $O \subset M$ supporting compatible local trivializations $\phi$ of $\mc{D}|_{O}$ and $U$ of $\mc{E}|_{O}$ (see \cite[Lemma 2.1.10]{Kristel2019c}).

\begin{lemma}
Suppose $(\phi_1,U_1)$ and $(\phi_2, U_2)$ are compatible local trivializations of $\mathcal{D}$.
Then, the corresponding local trivializations $\phi''_1$ and $\phi''_2$ of $\mathcal{D}^{\prime\prime}$, obtained by extending $\phi_1$ and $\phi_2$ with respect to the ultraweak topology, are compatible in the sense that 
\begin{equation*}
  \phi^{\prime\prime}_1 \circ (\phi^{\prime\prime}_2)^{-1} :  O_1 \cap O_2 \to \Aut(D^{\prime\prime})
\end{equation*}
is continuous with respect to the u-topology, where $O_i$ is the domain of definition of $(\phi_i, U_i)$.
\end{lemma}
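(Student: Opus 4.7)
The plan is to extract the unitary implementing the transition function on the completed Hilbert bundle and then translate strong continuity of that unitary into u-continuity of the algebra transition function. Let $O = O_1 \cap O_2$, and write $\hat{U}_i : \hat{\mc{E}}|_O \to O \times \hat{E}$ for the fibrewise extensions of $U_i$ to local trivializations of the continuous Hilbert bundle $\hat{\mc{E}}$, available via \cite[Lem.~2.1.13]{kristel2020smooth}. Setting
\[ V_x := \hat{U}_{1,x} \circ \hat{U}_{2,x}^{-1} \in \U(\hat{E}), \qquad x \in O, \]
the general fact that transition functions between local trivializations of a continuous Hilbert bundle are continuous in the strong operator topology yields that $V : O \to \U(\hat{E})$ is strongly continuous.

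Next, I would identify $\theta_x := \phi_{1,x}'' \circ (\phi_{2,x}'')^{-1}$ with conjugation by $V_x$. By compatibility of the local trivializations $(\phi_i, U_i)$, the unitary $\hat{U}_{i,x}$ intertwines the $\mc{D}_x$-action on $\hat{\mc{E}}_x$ with the $D$-action on $\hat{E}$ along $\phi_{i,x}$. Extending by ultraweak continuity and taking bicommutants, $\phi''_{i,x}$ coincides with $\operatorname{Ad}(\hat{U}_{i,x})$ restricted to $\mc{D}''_x$, where $D''$ is realized inside $\B(\hat{E})$. Consequently $\theta_x = \operatorname{Ad}(V_x)|_{D''}$.

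The final step is to convert strong continuity of $V$ into u-continuity of $\theta$. For each $\xi \in \hat{E}$ the normal vector functional $\omega_\xi \in D''_*$ satisfies
\[ \omega_\xi \circ \theta_x \,=\, \omega_{V_x^{-1}\xi}, \]
and the norm estimate $\|\omega_\eta - \omega_{\eta'}\| \leqslant (\|\eta\|+\|\eta'\|)\,\|\eta - \eta'\|$ combined with strong continuity of $V^{-1}$ shows that $x \mapsto \omega_\xi \circ \theta_x$ is continuous in the norm of $D''_*$. Since the representation $\hat{D} \to \B(\hat{E})$ is faithful (by definition of a rigged von Neumann algebra), so is its ultraweak extension $D'' \to \B(\hat{E})$; hence the vector states span a norm-dense subspace of $D''_*$. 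Because each $\theta_x$ acts isometrically on $D''_*$, a standard $\varepsilon/3$-argument extends the norm convergence from vector states to arbitrary elements of $D''_*$, which is precisely u-continuity of $\theta$ into $\Aut(D'')$.

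The main obstacle is this last density-plus-equicontinuity step, as the u-topology is defined via $D''_*$ while we only have direct access to $\hat{E}$. One can either spell out the $\varepsilon/3$-argument as above, or invoke an analogue of \cite[Rem.~B.9]{ludewig2023spinor} (equivalently Lemma~A.18 of \cite{ConformalNetsI}) for faithful left modules, which states directly that the map sending a unitary implementer to the automorphism it induces is a continuous surjection from $\U(\hat{E})$ (strong topology) onto its image in $\Aut(D'')$ (u-topology); applying this to the strongly continuous path $V$ then immediately gives u-continuity of $\theta$.
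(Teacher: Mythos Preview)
Your proposal is correct and follows essentially the same route as the paper: extract the strongly continuous unitary transition function $V_x = \hat U_{1,x}\hat U_{2,x}^{-1}$, identify $\phi_1''\circ(\phi_2'')^{-1}$ with $\mathrm{Ad}(V_x)$, and then use that conjugation $\U'(\hat E)\to\Aut(D'')$ is continuous from the strong to the u-topology. The only difference is cosmetic: the paper dispatches the last step by a single citation (to \cite[Remark~B.11]{ludewig2023spinor}, rather than B.9), whereas you additionally sketch the direct $\varepsilon/3$ argument through vector functionals --- which is fine and in fact makes the proof self-contained.
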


\begin{proof}
Denote by $O_1, O_2 \subseteq X$ the open subsets on which the trivializations of $\mathcal{D}$ are defined.
%We need to show that the transition function
%\begin{equation*}
%  \phi^{\prime\prime} := \phi^{\prime\prime}_1 \circ (\phi^{\prime\prime}_2)^{-1} :  O_1 \cap O_2 \to \Aut(D^{\prime\prime})
%\end{equation*}
%is continuous with respect to the u-topology.
As local trivializations of rigged Hilbert bundles extend uniquely to continuous trivializations of ordinary Hilbert bundles, the map
\begin{equation*}
\hat{U} := \hat{U}_1 \circ (\hat{U_2})^* : O_1 \cap O_2 \to \U(\hat{E})
\end{equation*}
is continuous.
we have for all $x \in O_1 \cap O_2$ that
\begin{equation*}
\hat{U}_x (a \lact \hat{U}_x^* \xi) = (\phi^{\prime\prime}_1 \circ (\phi^{\prime\prime}_2)^{-1})(a) \xi,  \qquad a \in D^{\prime\prime}, ~~\xi \in \hat{E}.
\end{equation*}
Hence we can factorize
\begin{equation*}
  \phi^{\prime\prime}_1 \circ (\phi^{\prime\prime}_2)^{-1} : O_1 \cap O_2 \to \U^\prime(\hat{E}) \stackrel{t}{\to} \Aut(D^{\prime\prime}),
\end{equation*}
where $\U^\prime(\hat{E}) \subset \U(\hat{E})$ is the subgroup of unitaries that preserve $D^{\prime\prime} \subseteq \B(\hat{E})$ upon conjugation and the second map sends a unitary to the automorphism it induces by conjugation.
The first map is continuous as seen above, the second map is continuous by \cite[Remark~B.11]{ludewig2023spinor}.
Hence $\phi^{\prime\prime}_1 \circ (\phi^{\prime\prime}_2)^{-1}$ is continuous.
\end{proof}

\begin{definition}
\label{def:riggedtocontinuous}
Let $(\mc{D},\mc{E})$ be a rigged von Neumann algebra bundle with typical fibre $(D,E)$ over  a Fr\'echet manifold $M$, where $\hat E$ is a standard form of $D''$. 
The \emph{associated continuous von Neumann algebra bundle} is the collection $\mathcal{D}''=(\mathcal{D}_x'')_{x\in X}$ together with the local trivializations $\phi''$ induced from all local module trivializations $(\phi,U)$ of $\mathcal{D}$.
\end{definition}

The following result assures that associated continuous von Neumann algebra bundles are compatible with morphisms of rigged von Neumann algebra bundles.

\begin{proposition}
\label{prop:fromriggedtocontinuous}
\cref{def:riggedtocontinuous} establishes a functor between the category of rigged von Neumann algebra bundles with spatial morphisms  to the category of continuous von Neumann algebra bundles.
\end{proposition}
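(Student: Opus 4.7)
The plan is to verify, in turn: (i) that the object assignment $(\mathcal{D},\mathcal{E}) \mapsto \mathcal{D}''$ actually produces a continuous von Neumann algebra bundle (this is almost done), (ii) that spatial morphisms extend fibrewise and assemble to morphisms of continuous von Neumann algebra bundles, and (iii) that this assignment respects identities and composition.

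For (i), the bundle $\mathcal{D}''$ carries the fibrewise von Neumann algebras from \cref{eq:fibrewisefibres}, and each compatible pair $(\phi,U)$ of local trivializations of $(\mathcal{D},\mathcal{E})$ provides an isometric $\ast$-isomorphism $\phi_x : \mathcal{D}_x \to D$, which extends uniquely to a $\ast$-isomorphism $\phi_x'' : \mathcal{D}_x'' \to D''$ via the ultraweak closure. I take these $\phi''$ as the local trivializations of $\mathcal{D}''$; the lemma immediately preceding \cref{def:riggedtocontinuous} already establishes that the transition functions between any two such $\phi_1''$ and $\phi_2''$ are continuous into $\Aut(D'')$ with the u-topology. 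Hence $\mathcal{D}''$ is a continuous von Neumann algebra bundle.

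For (ii), let $(\varphi,U) : (\mathcal{D}_1,\mathcal{E}_1) \to (\mathcal{D}_2,\mathcal{E}_2)$ be a spatial morphism. The unitary morphism $U$ of rigged Hilbert bundles extends uniquely to a unitary bundle isomorphism $\hat U : \hat{\mathcal{E}}_1 \to \hat{\mathcal{E}}_2$ of continuous Hilbert bundles. The fibrewise intertwining relation $U_x(a \lact \xi) = \varphi_x(a) \lact U_x(\xi)$ on the rigged level then passes, by ultraweak continuity and density of $\mathcal{D}_{i,x}$ in $\mathcal{D}_{i,x}''$, to a unique $\ast$-isomorphism $\varphi_x'' : \mathcal{D}_{1,x}'' \to \mathcal{D}_{2,x}''$ characterized by
\begin{equation*}
\hat U_x(a \lact \hat U_x^{*}\xi) = \varphi_x''(a) \lact \xi, \qquad a \in \mathcal{D}_{1,x}'',\ \xi \in \hat{\mathcal{E}}_{2,x}.
\end{equation*}
To check that $\varphi''$ is a morphism of continuous von Neumann algebra bundles, I would pick compatible local trivializations $(\phi_i,U_i)$ of $(\mathcal{D}_i,\mathcal{E}_i)$ over a common open set $O$ and consider the local representative $\phi_2'' \circ \varphi'' \circ (\phi_1'')^{-1} : O \to \Hom(D_1'', D_2'')$. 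Exactly as in the proof of the preceding lemma, the composition $\hat U_2 \circ \hat U \circ \hat U_1^{*} : O \to \U(\hat E_2)$ is strongly continuous because it is assembled from rigged trivializations, and factoring through the appropriate subgroup $\U'(\hat E_2) \to \Hom(D_1'', D_2'')$ (continuous by the general principle invoked in the lemma, e.g., \cite[Rem.~B.11]{ludewig2023spinor}) yields continuity in the u-topology. The main obstacle here is the same as in the preceding lemma: one needs that the ``conjugation by a strongly continuous unitary'' map lands continuously in the u-topology, but for this we simply invoke the cited continuity principle rather than reprove it.

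For (iii), functoriality is automatic once the extensions are defined: identity spatial morphisms clearly extend to identities, and for a composable pair $(\varphi,U)$, $(\varphi',U')$, both $\varphi'' \circ (\varphi')''$ and $(\varphi \circ \varphi')''$ satisfy the same characterizing intertwining identity with respect to $\hat U \hat U'$, so uniqueness of the extension forces them to coincide. This completes the proposal.
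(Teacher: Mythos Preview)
Your proposal is correct and follows essentially the same approach as the paper's (very terse) proof, which simply notes that spatial morphisms extend fibrewise by conjugation with $\hat U_x$ and that the extensions are compatible with local trivializations; you have filled in the details the paper omits, including the explicit functoriality check. One small notational slip: the composite $\hat U_2 \circ \hat U \circ \hat U_1^{*}$ is a unitary $\hat E_1 \to \hat E_2$, not an element of $\U(\hat E_2)$, so the relevant continuity principle is the one for unitaries between two Hilbert spaces inducing $*$-isomorphisms $D_1'' \to D_2''$; this does not affect the argument.
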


\begin{proof}
A spatial morphism $(\phi,U): (\mathcal{D}_1, \mathcal{E}_1) \to (\mathcal{D}_2, \mathcal{E}_2)$ between rigged von Neumann algebra bundles extends fibrewise (via conjugation by $U_x$ or, equivalently, ultraweak continuity of $\phi_x$) to normal $*$-homomorphisms $\phi_x: (\mathcal{D}_1'')_x \to (\mathcal{D}_2'')_x$, and these send local trivializations to local trivializations. 
It is clear that all constructions are compatible with pullbacks.  
\end{proof}

\begin{example}
\label{ex:cliffordalgebrabundle}
Applying \cref{def:riggedtocontinuous} to the rigged von Neumann algebra bundle $\mathcal{A}\smooth_{LM}:=(\mathcal{D},\mathcal{E})$ of \cref{ex:riggedcliffordalgebrabundle}, we obtain a continuous von Neumann algebra bundle
$(\mathcal{A}_{LM}\smooth)''$ over $LM$ with typical fibre $A$. In fact, we have
\begin{equation*}
(\mathcal{A}\smooth_{LM})'' = L\Spin(M) \times_{L\Spin(d)} A\text{,}
\end{equation*} 
where the associated bundle is 
formed using the continuous representation $\omega'$ of $L\Spin(d)$ on $A$.
\end{example}

Finally, we have to consider rigged bimodule bundles.
Let  $(\mathcal{D}_{1},\mathcal{E}_1)$ and $(\mathcal{D}_{2},\mathcal{E}_2)$ be rigged von Neumann algebra bundles over a Fr\'echet manifold $M$ with typical fibres $(D_{1},E_1)$ and $(D_{2},E_2)$, respectively, and let $E$ be a rigged $D_1$-$D_2$-bimodule.  
A \emph{rigged $\mathcal{D}_1$-$\mathcal{D}_2$-bimodule bundle $\mathcal{E}$ with typical fibre $E$} is a rigged Hilbert bundle $\mathcal{E}$ over $M$ 
that is both a rigged $\mathcal{D}_{1}$-module bundle and a rigged $\mathcal{D}_{2}^{\opp}$-module bundle such that the actions commute, and such that around every point in $M$ there exist local trivializations  $\phi_1$ of $\mc{D}_{1}$, $U_1$ of $\mathcal{E}_1$,  $\phi_2$ of $\mathcal{D}_{2}$, $U_2$ of $\mathcal{E}_2$, and $V$ of $\mc{E}$ with  $(\phi_1,U_1)$, $(\phi_2,U_2)$,  $(\phi_1,V)$ and $(\phi_2,V)$ all compatible at the same time.    
\cite[Lemma 2.2.14]{Kristel2019c} shows that that each fibre $\mathcal{E}_x$ is a rigged $(\mathcal{D}_1)_x$-$(\mathcal{D}_2)_x$-bimodule, and \cite[Lem. 2.1.16]{Kristel2019c} shows then that the completion $\hat{\mathcal{E}_x}$ is a $(\mathcal{D}_1)_x''$-$(\mathcal{D}_2)_x''$-bimodule.

\begin{lemma}
        \label{prop:riggedtocontinuousbimodulebundle}
        If $(\mc{D}_1, \mathcal{E}_1)$ and $(\mc{D}_2, \mathcal{E}_2)$ are rigged von Neumann algebra bundles, and $\mc{E}$ is a rigged  $\mc{D}_1$-$\mc{D}_2$-bimodule bundle, then 
        the  $(\mathcal{D}_1)_x''$-$(\mathcal{D}_2)_x''$-bimodule structure on the fibres $\hat{\mathcal{E}}_x$ turn the Hilbert bundle $\hat{\mathcal{E}}$ into a $\mathcal{D}_1''$-$\mathcal{D}_2''$-bimodule bundle.
\end{lemma}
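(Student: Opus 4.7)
The plan is to verify that $\hat{\mathcal{E}}$ is a locally trivial Hilbert bundle whose fibres carry compatible bimodule structures in the sense of \cref{von-Neumann-algebra-bundles}, i.e., one needs to produce, around each point, a bimodule bundle trivialization of $\hat{\mathcal{E}}$ that intertwines along local trivializations of $\mathcal{D}_1''$ and $\mathcal{D}_2''$.

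First, given a point $x \in M$, invoke the definition of a rigged $\mathcal{D}_1$-$\mathcal{D}_2$-bimodule bundle to produce an open neighborhood $O \subseteq M$ supporting simultaneously compatible local trivializations $\phi_i$ of $\mathcal{D}_i$, $U_i$ of $\mathcal{E}_i$, and $V$ of $\mathcal{E}$, where the four pairs $(\phi_1, U_1)$, $(\phi_2, U_2)$, $(\phi_1, V)$, and $(\phi_2, V)$ are all module trivializations. Extending fibrewise to the Hilbert completions, one obtains continuous trivializations $\hat U_i : \hat{\mathcal{E}_i}|_O \to O \times \hat E_i$ and $\hat V : \hat{\mathcal{E}}|_O \to O \times \hat E$ of Hilbert bundles, as recalled in the discussion of rigged Hilbert bundles. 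By \cref{def:riggedtocontinuous}, these extend the algebra trivializations $\phi_i$ to trivializations $\phi_i''$ of $\mathcal{D}_i''$ in the continuous von Neumann algebra bundle sense.

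Second, one must check that $\hat V$ is a bimodule trivialization along $\phi_1''$ and $\phi_2''$. On each fibre, the pair $(\phi_i(x), V(x))$ intertwines the rigged module actions by construction; since the actions of $(\mathcal{D}_i)_x''$ on $\hat{\mathcal{E}}_x$ are the unique normal extensions of the rigged actions and since $\phi_i(x)$ extends uniquely to a normal $\ast$-isomorphism $\phi_i''(x)$, the pair $(\phi_i''(x), \hat V(x))$ intertwines the extended actions. Fibrewise intertwining at each $x \in O$ plus continuity of $\hat V$ and $\phi_i''$ is the definition of a local bimodule trivialization.

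Finally, it remains to verify that the collection of local data just constructed fits together to a global bimodule bundle structure. The transition functions between any two such trivializations take values in the intertwiner group $I(\hat E)$ of \cref{DefinitionNH}; their continuity in the appropriate topology reduces to the already-established continuity statements, namely the continuity of transition functions of the Hilbert bundle $\hat{\mathcal{E}}$ (strong topology on $\U(\hat E)$) together with the preceding lemma's proof on continuity of $\phi_i'' \circ (\phi_i'')^{-1}$ in the u-topology on $\Aut((D_i)'')$. The only potential obstacle is ensuring these two continuities combine correctly in $I(\hat E)$ with its subspace topology, but this is precisely the content of Remark~B.9 of \cite{ludewig2023spinor} (or Lemma~A.18 of \cite{ConformalNetsI}) already cited in \cref{SectionAUTA}. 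Compatibility with spatial morphisms and pullbacks is then entirely analogous to the argument in \cref{prop:fromriggedtocontinuous}.
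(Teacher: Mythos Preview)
Your proof is correct and follows the same approach as the paper: pick simultaneous compatible local trivializations, complete them, and verify that the completed trivialization $\hat V$ intertwines along the extended algebra trivializations $\phi_i''$. Your third paragraph on transition-function continuity in $I(\hat E)$ is superfluous---the definition of a bimodule bundle in \cref{von-Neumann-algebra-bundles} only asks for the existence of local bimodule trivializations, which your first two paragraphs (and the paper's proof) already establish; note also that the cited Remark~B.9 concerns the standard bimodule $L^2(A)$ specifically, so invoking it for a general $\hat E$ would need more care, but fortunately it is not needed here.
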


\begin{proof}
We consider an open set $O \subset M$ that supports local trivializations 
\begin{align*}
\phi_1:\mc{D}_{1}|_O \to O \times D_{1} && U_1: \mathcal{E}_1|_O \to O \times E_1 
\\
\phi_2: \mathcal{D}_{2}|_O \to O \times D_{2} && U_2:\mathcal{E}_2|_O \to O \times E_{2}
\end{align*}
and
\begin{equation*}
V:\mc{E}|_O \to O \times E
\end{equation*}
The compatibility conditions imply that for each $x\in M$ we have        
\begin{align}\label{eq:IntertwinersForBimodules}
V_x( a_1 \lact \xi \ract a_2) &= (\phi_1)_x(a_1) \lact V_x(\xi) \ract (\phi_2)_x(a_2), \qquad a_i \in \mathcal{D}_i, ~~ \xi \in \mathcal{E}_x.
\end{align}
The further compatibility conditions imply, as discussed above, that $(\phi_i)_x$ extend to local trivializations $(\phi_i'')_x: (\mathcal{D}_i'')_x \to D_i''$ of von Neumann algebra bundles (obtained by conjugation with the completions $(\hat{U}_i)_x: (\hat{\mathcal{E}}_i)_x \to \hat E_i$). 
Thus, \cref{eq:IntertwinersForBimodules} extends by continuity to the completions, and becomes
\begin{align}
\hat{V}_x( a_1 \lact \xi \ract a_2) &= (\phi_1'')_x(a_1) \lact \hat{V}_x(v) \ract (\phi_2'')_x(a_2), \qquad a_i \in (\mathcal{D}_i^{\prime\prime})_x, ~~ \xi \in \hat{\mathcal{E}}_x.
\end{align}
This shows that $\hat{V}_x$ is an intertwiner along $\phi_1''$ and $\phi_2''$. 
\end{proof}

%\begin{example}
%\label{ex:spinorbundleasbimodulebundle}
%We consider the rigged Hilbert bundle $\mathcal{S}\smooth_{LM}$ of \cref{ex:spinorbundleonloopspace} over the Fr\'echet manifold $LM$. 
%Its pullback $\cup^{*}\mathcal{S}\smooth_{LM}$ is then a rigged Hilbert bundle over $PM^{[2]}$.
%Similarly, the pullbacks of $(\mathcal{A}\smooth, L^2(\mathcal{A})\smooth)$ from \cref{ex:riggedcliffordalgebrabundle} along the two projection %maps $PM^{[2]} \to PM$ are rigged von Neumann algebra bundles over $PM^{[2]}$.
%In \cite[\S 5.2]{Kristel2019c} we proved that $\cup^{*}\mathcal{S}\smooth_{LM}$ is a rigged $\pr_2^{*}\mathcal{A}\smooth$-$\pr_1^{*}\mathcal{A}\smooth$-bimodule %bundle over $PM^{[2]}$. 
%Since the completion of $\pr_i^{*}\mathcal{A}\smooth$ is just $\pr_i^*\mathcal{A}$, the von Neumann algebra bundle defined in \cref{eq:A}, applying %the above bimodule completion procedure, we obtain a continuous $\pr_2^{*}\mathcal{A}$-$\pr_1^{*}\mathcal{A}\smooth$-bimodule bundle $\widehat{\mathcal{S}\smooth_{LM}}$ %over $PM^{[2]}$.
%It is easy to see that this bundle is canonically isomorphic to the bundle $\mathcal{S}_{LM}$ defined in \cref{DefinitionSpinorBundle}.
%\end{example}

\end{appendix}

\bibliography{bibfile}
\bibliographystyle{kobib}

\end{document}